\DeclareSymbolFont{symbols}{OMS}{cmsy}{m}{n}
\newcommand{\commentout}[1]{}
\numberwithin{equation}{section} 
\newtheorem{thm}{Theorem}[section]
\newtheorem{lem}[thm]{Lemma}
\newtheorem{cor}[thm]{Corollary}
\newtheorem{prop}[thm]{Proposition}
\theoremstyle{definition}
\newtheorem{defi}[thm]{Definition}
\theoremstyle{remark}
\newtheorem{rmk}[thm]{Remark}
\newcommand{\Hil}{\mathcal{H}}
\newcommand{\slot}{{~\cdot~}}
\DeclareMathOperator{\Spin}{Spin}
\DeclareMathOperator{\SO}{SO}
\DeclareMathOperator{\PSL}{PSL}
\DeclareMathOperator{\SU}{SU}
\DeclareMathOperator{\U}{U}
\newcommand{\C}{\mathcal{C}}
\newcommand{\cU}{\mathcal{U}}
\newcommand{\A}{\mathcal{A}}
\newcommand{\B}{\mathcal{B}}
\newcommand{\cI}{\mathcal{I}}
\newcommand{\M}{\mathcal{M}}
\newcommand{\N}{\mathcal{N}}
\newcommand{\RR}{\mathbb{R}}
\newcommand{\TT}{\mathbb{T}}
\newcommand{\CC}{\mathbb{C}}
\newcommand{\ZZ}{\mathbb{Z}}
\newcommand{\NN}{\mathbb{N}}
\DeclareMathOperator{\Diff}{Diff}
\DeclareMathOperator{\Vir}{{Vir}}
\DeclareMathOperator{\Ad}{Ad}
\DeclareMathOperator{\id}{id}
\DeclareMathOperator{\Aut}{Aut}
\newcommand{\lqq}{\lq\lq}
\newcommand{\rqq}{\rq\rq\@\xspace}
\newcommand{\dd}{\,\mathrm{d}}
\DeclareRobustCommand{\eg}{e.g.\@\xspace}
\DeclareRobustCommand{\cf}{cf.\@\xspace}
\DeclareRobustCommand{\Cf}{Cf.\@\xspace}
\DeclareRobustCommand{\ie}{i.e.\@\xspace}
\DeclareRobustCommand{\pp}{pp.\@\xspace}
\DeclareRobustCommand{\etc}{%
    \@ifnextchar{.}%
        {etc}%
        {etc.\@\xspace}%
}
\newcommand{\Cstar}{$C^\ast$\@\xspace}
\def\u1net{{\A_\RR}}
\DeclareMathOperator*{\QuOp}{QuOp}
\DeclareMathOperator*{\UCP}{UCP}
\DeclareMathOperator{\Extr}{Extr}
\def\III{{I\!I\!I}}
\newcommand{\CS}{/\!\!/} 
\renewcommand{\slot}{\,\cdot\,}
\def\subsection{\@startsection{subsection}{2}%
  \z@{.5\linespacing\@plus.7\linespacing}{.3\linespacing}%
  {\normalfont\bfseries}}
\begin{document}

\date{} 
\dateposted{} 

\title{Quantum Operations on Conformal Nets}

\address{Department of Mathematics, Morton Hall 321, 1 Ohio University, Athens, OH 45701, USA \bigskip}
\author{Marcel Bischoff}
\email{bischoff@ohio.edu}
\address{Dipartimento di Matematica, Universit\`a degli studi di Bari, Via E. Orabona, 4, 70125 Bari, Italy}
\author{Simone Del Vecchio}
\email{simone.delvecchio@uniba.it}
\address{Dipartimento di Matematica, Universit\`a di Roma Tor Vergata, Via della Ricerca Scientifica, 1, I-00133 Roma, Italy}
\author{Luca Giorgetti}
\email{giorgett@mat.uniroma2.it}
\thanks{M.B.\ is supported by NSF DMS grant 1700192/1821162 \emph{Quantum Symmetries and Conformal Nets}. L.G.\ is supported by the European Union's Horizon 2020 research and innovation programme H2020-MSCA-IF-2017 under Grant Agreement 795151 \emph{Beyond Rationality in Algebraic CFT: mathematical structures and models}. We also acknowledge support from the \emph{MIUR Excellence Department Project awarded to the Department of Mathematics, University of Rome Tor Vergata, CUP E83C18000100006}, and from the \emph{University of Rome Tor Vergata funding OAQM,
CUP E83C22001800005}}

\begin{abstract}
On a conformal net $\A$, one can consider collections of unital completely positive maps on each local algebra $\A(I)$, subject to natural compatibility, vacuum preserving and conformal covariance conditions. We call \emph{quantum operations} on $\A$ the subset of extreme such maps. The usual automorphisms of $\A$ (the vacuum preserving invertible unital *-algebra morphisms) are examples of quantum operations, and we show that the fixed point subnet of $\A$ under all quantum operations is the Virasoro net generated by the stress-energy tensor of $\A$. Furthermore, we show that every irreducible conformal subnet $\B\subset\A$ is the fixed points under a subset of quantum operations. 

When $\B\subset\A$ is discrete (or with finite Jones index), we show that the set of quantum operations on $\A$ that leave $\B$ elementwise fixed has naturally the structure of a compact (or finite) hypergroup, thus extending some results of \cite{Bi2016}. Under the same assumptions, we provide a Galois correspondence between intermediate conformal nets and closed subhypergroups. In particular, we show that intermediate conformal nets are in one-to-one correspondence with intermediate subfactors, extending a result of Longo in the finite index/completely rational conformal net setting \cite{Lo2003}.
\end{abstract}

\maketitle
\tableofcontents

\setcounter{tocdepth}{3}


\section{Introduction}

The problems of studying and classifying extensions or subtheories of a given Conformal Field Theory (CFT) are of a different nature, no matter which mathematical (\lqq axiomatic\rqq \ie model independent) formulation one works with. Let us consider for the sake of explaining the difference only \emph{rational} CFTs (those with finitely many inequivalent irreducible positive energy representations, other than the vacuum representation). Extensions can be described using the language and the methods of tensor category theory. While subtheories, to our knowledge and until now, cannot be described tensor categorically in a systematic manner. Nevertheless, in the operator algebraic description of (local and chiral \ie in one spacetime dimension) CFT \cite{Lo2}, \cite{CaKaLoWe2018}, which we shall deal with in this work, the previous statement might sound surprising at first sight. 

A local chiral CFT is formulated (within the more general AQFT setting \cite{Ha}) as a collection of von Neumann algebras $\A(I)$ attached to the proper open intervals $I\subset S^1$ of the unit circle, undergoing a few physically motivated prescriptions (mainly: isotony, locality and conformal covariance of the fields). This description is model independent and based on commonly accepted \lqq first principles\rqq. The collection $\{I\subset S^1 \mapsto \A(I)\}$, denoted by $\A$, is called a local conformal net, or just \emph{conformal net}. One also often specifies the vacuum Hilbert space representation, the projective unitary representation of the group of orientation preserving diffeomorphisms of $S^1$ implementing the conformal covariance, and the vacuum vector \ie the ground state of the conformal Hamiltonian. 

Given a conformal net $\A$, extensions $\A\subset\B$ and subtheories $\B\subset\A$ are both described by \emph{nets of subfactors}, a point of view systematically exploited in \cite{LoRe1995} but present in the literature since the initial works in 3+1 dimensional Minkowski spacetime \cite{DoHaRo1969I}. For every fixed interval $I\subset S^1$, the extension or subtheory is described by a subfactor $\N\subset\M$ (a unital inclusion of von Neumann algebras with trivial center), where the local algebra $\A(I)$ is either $\N$ or $\M$. Assume for the moment that the subfactor has finite Jones index \cite{Jo1983} (roughly speaking: the relative size of $\M$ over $\N$ is finite, although $\M$ and $\N$ are typically infinite-dimensional algebras). Then $\N\subset\M$ can be equally well described by a Q-system \cite{Lo1994} (a unitary Frobenius algebra) in the category of endomorphisms \cite{GiYu2019} (or bimodules) either of $\N$ or of $\M$, in a symmetric fashion. The symmetry is broken however when one wants to describe the whole net of subfactors using Q-systems. 

On the one hand, by \cite{LoRe1995}, finite index extensions $\A\subset\B$ can be characterized by Q-systems in the category of localizable and transportable representations of $\A$  (called DHR representations, after Doplicher--Haag--Roberts \cite{DoHaRo1971}). This description has proven to be extremely powerful, being one of the main tools used to arrive at the classification of conformal nets in the discrete series \cite{KaLo2004}. On the other hand, this method does not adapt to subtheories, as is immediately evident in the case of holomorphic chiral CFTs (those with trivial representation category) which do indeed have non-trivial conformal subnets.

If we no longer restrict ourselves to rational chiral CFTs, or more generally in higher dimensional QFT, infinite index inclusions (extensions and subtheories) may well appear in the analysis of models, for example when one takes theories with compact and non-finite groups of global gauge symmetries into account, see \eg \cite{BuMaTo1988}, \cite{DoRo1990}, \cite{CaCo2001}, \cite{CaCo2001Siena}. It must also be said that finite index extensions have been widely studied in the conformal net (and more generally AQFT) literature, since \cite{LoRe1995}, while a systematic analysis of subtheories is more recent in comparison \cite{Bi2016}. In the possibly infinite index case, conformal net extensions (where the machinery of \cite{DoRo1990} does not apply due to the non-symmetricity of the DHR braiding \cite{FrReSc1989}) have been studied in \cite{DeGi2018}.

In this work, building on the previous analysis of the first named author \cite{Bi2016} in the case of finite index inclusions of completely rational conformal nets, we propose to study subtheories as the fixed points under \emph{quantum operations}. In the first part of the paper, we work in the setting of arbitrary conformal nets (Definition \ref{confnet}) and their subnets (Definition \ref{confsubnetdef}). In the second part, we restrict ourselves to discrete conformal subnets (Definition \ref{discretesubnets}), which also cover the case of finite index subnets (Definition \ref{finindexsubnets}). In this second part, we show that our previous analysis of local discrete subfactors \cite{BiDeGi2021}, \cite{BiDeGi2022} applies to conformal subnets as well. In more detail, a quantum operation on $\A$ (Definition \ref{QuOpA}) is a collection of unital completely positive maps $\A(I) \to \A(I)$, indexed by $I\subset S^1$, that are compatible with the inclusions of local algebras $\A(I) \subset \A(J)$ for $I \subset J$, vacuum preserving and conformally covariant in a natural sense, and \emph{extreme} in the sense of convex sets among all unital completely positive maps on $\A$. We denote by $\QuOp(\A)$ the set of quantum operations. The terminology is inspired by quantum information theory \cite{OhPe1993}, \cite{WildeBook}, where unital completely positive maps (typically between finite-dimensional \Cstar-algebras) describe quantum channels. We show that the automorphisms of $\A$ (the most commonly considered type of symmetry transformation in AQFT, Definition \ref{AutA}) are quantum operations (Proposition \ref{AutinQuOp}), and that the set of all unital completely positive maps on $\A$ is compact and Hausdorff in the pointwise ultraweak operator topology over every interval $I\subset S^1$ (Theorem \ref{UCPcompact}). 

Our first main result (Theorem \ref{fixedpointsVir}) states that the fixed point subnet of $\A$ under all quantum operations is the minimal and canonical Virasoro subnet (generated by the stress-energy tensor). Consequently, every subnet $\B$ of $\A$ that contains the Virasoro subnet of $\A$, or equivalently such that $\B\subset\A$ is irreducible (Definition \ref{irredinclusion}) by Proposition \ref{irrediffintermediate}, is the fixed points under a subset of quantum operations. 
More generally, given an irreducible conformal inclusion $\B\subset\A$, denote by $\QuOp(\A|\B)$ the subset of quantum operations on $\A$ that leave $\B$ elementwise fixed. If $\B\subset\A$ is discrete, we show that $\QuOp(\A|\B)$ is closed in the compact Hausdorff space of all unital completely positive maps on $\A$ (hence compact and Hausdorff with the induced topology) and that it naturally forms a hypergroup (Theorem \ref{QuOpHyper}). Hypergroups (Definition \ref{abstractcpthyp}) are a classical generalization of group and they are well-suited for abstract harmonic analysis. An abstract convolution replaces the group operation, an involution replaces the group inversion, there is an identity element and a Haar measure (finite in the case of compact hypergroups). The key point in the proof of Theorem \ref{QuOpHyper}, besides applying our previous results on local discrete subfactors \cite{BiDeGi2021}, \cite{BiDeGi2022}, is to show that every $\B(I)$-fixing unital completely positive map $\A(I) \to \A(I)$, for fixed $I\subset S^1$, with no additional assumption, can be extended to a compatible, covariant and vacuum preserving family of $\B$-fixing maps on the whole net $\A\to\A$ (Theorem \ref{discretecase}). We don't know whether the same statement is true for arbitrary irreducible conformal inclusions (not assuming discreteness).

Our second main result (Theorem \ref{Galois}), assuming that $\B\subset\A$ is discrete, provides a one-to-one correspondence between the closed subhypergroups of $\QuOp(\A|\B)$ and the conformal subnets of $\A$ that contain $\B$. In particular, we show that the latter are in one-to-one correspondence with the intermediate von Neumann algebras $\B(I) \subset\N\subset\A(I)$, for fixed $I\subset S^1$, a result due to Longo \cite{Lo2003} in the case of finite index inclusions of completely rational conformal nets.

\section{Conformal nets and subnets}\label{prelim}

Let $\mathrm{PSL}(2,\RR) := \mathrm{SL}(2,\RR)/\{\pm 1\}$ and $\mathcal{I}$ be the set of non-empty, non-dense, open intervals $I$ of the unit circle $S^{1}$. $\mathrm{PSL}(2,\RR)$ acts on $S^{1}$ by M\"obius transformations, see, \eg, \cite[Chapter 1]{Lo2}, \cite[Appendix I]{GaFr1993}.
Denote by $I' := (S^{1}\setminus I)^{\circ}$ the interior of the complement of the interval $I\in\mathcal{I}$.
Denote also by $\B(\Hil)$ the algebra of bounded operators on $\Hil$ and by $\cU(\Hil)$ the unitary subgroup.

\begin{defi}
A {\bf M\"obius covariant net} on $S^{1}$ is a triple $(\A, U, \Omega)$ consisting of a family of von Neumann algebras $\A=\left\{\A(I) \subset\B(\Hil): I\in\mathcal{I}\right\}$ acting on a common complex separable Hilbert space $\Hil$, a strongly continuous unitary representation $U : \mathrm{PSL}(2,\RR) \to \cU(\Hil)$ and a unit vector $\Omega \in \Hil$, satisfying the following properties:
\begin{enumerate}
\item[(i)] \textbf{Isotony}: $\A(I_{1})\subset\A(I_{2})$, if $I_{1}\subset I_{2}$, $I_{1},I_{2}\in \mathcal{I}$.
\item[(ii)] \textbf{Locality}: $\A(I_{1})\subset\A(I_{2})'$, if $I_{1}\cap I_{2}=\emptyset$, $I_{1},I_{2}\in \mathcal{I}$.
\item[(iii)] \textbf{M\"obius covariance}: for every $I\in\mathcal{I}$, $g\in \mathrm{PSL}(2,\RR)$,
\begin{align}
U(g)\A(I)U(g)^{-1}=\A(gI).
\end{align}
\item[(iv)] \textbf{Positivity of energy}: $U$ has positive energy. Namely, the conformal Hamiltonian (the generator of the one-parameter rotation subgroup of $\mathrm{PSL}(2,\RR)$) has non-negative spectrum.
\item[(v)] \textbf{Vacuum vector}: $\Omega$ is the unique vector (up to a phase) with the property
$U(g)\Omega=\Omega$ for every $g\in \mathrm{PSL}(2,\RR)$, and vectors of the form $x\Omega$, $x\in\bigvee_{I\in\mathcal{I}}\A(I)$, are dense in $\Hil$.
\end{enumerate}

Here $\bigvee_{I\in\mathcal{I}}\A(I)$ denotes the von Neumann algebra generated in $\B(\Hil)$ by the $\A(I)$, $I\in\mathcal{I}$, and $\A(I)'$ denotes the commutant of $\A(I)$ in $\B(\Hil)$, namely $\A(I)' := \{x\in\B(\Hil): xy = yx, y\in\A(I)\}$. The $\A(I)$ are referred to as the \textbf{local algebras} of $\A$ and $\Hil$ as the \textbf{vacuum Hilbert space} of $\A$.
\end{defi}

With these assumptions, the following properties automatically hold. See \cite[Theorem 2.3]{BrGuLo1993}, \cite[Theorem 2.19]{GaFr1993}, \cite[Section 1]{GuLo1996}, \cite[Section 3]{FrJr1996}, \cite[Chapter 3]{CaKaLoWe2018}. Let $I\in\cI$, then

\begin{enumerate}
\item \textbf{Reeh--Schlieder theorem}: $\Omega$ is cyclic and separating for $\A(I)$. Namely, vectors of the form $x\Omega$, $x\in\A(I)$, are dense in $\Hil$, and $x\Omega = 0$ implies $x = 0$.
\item \textbf{Bisognano--Wichmann theorem}: Denote by $\Delta_I$ and $J_I$ respectively the Tomita--Takesaki modular operator and antiunitary conjugation (for whose definition we refer to \cite{BrRo1}) associated with $\A(I)$ and $\Omega$. Denote by $\delta_I(t)$, $t\in\RR$, the one-parameter dilation subgroup of $\PSL(2,\RR)$ associated with $I$ (the special conformal transformations that preserve $I$). Then $\Delta_I^{it} = U(\delta_I(2\pi t))$ for every $t\in\RR$, and $J_I$ acts as the reflection mapping $I$ to $I'$.
\item \textbf{Haag duality}: $\A(I)'=\A(I')$.
\item \textbf{Factoriality}: As a consequence of the uniqueness of the vacuum vector, $\A(I)$ is a factor, necessarily of type $\III_1$ in Connes' classification \cite{Co1973}. Equivalently, $\bigvee_{I\in\mathcal{I}}\A(I) = \B(\Hil)$, \ie, $\left(\bigvee_{I\in\mathcal{I}}\A(I)\right)' = \CC 1$.
\end{enumerate}

The Bisognano--Wichmann theorem implies in particular that the M\"obius covariance (the unitary representation $U$ of $\PSL(2,\RR)$) can be reconstructed from the datum of the local algebras and the vacuum vector, see \cite{GuLoWi1998}. We shall assume throughout this paper the stronger covariance property under diffeomorphisms:

\begin{defi}\label{confnet}
Let $\Diff_+(S^1)$ be the group of orientation preserving diffeomorphisms of $S^1$. 
By a \textbf{conformal net} (or \textbf{diffeomorphism covariant net}) on $S^1$ we shall mean a M\"obius covariant net $(\A,U,\Omega)$ which satisfies in addition:
\begin{enumerate}
\item[(vi)] The representation $U$ of $\PSL(2,\RR)$ extends to a strongly continuous projective unitary representation of $\Diff_+(S^1)$, again denoted by $U$, such that for every $I\in\mathcal{I}$:
\begin{align}
U(\gamma)\A(I)U(\gamma)^{-1}&=\A(\gamma I), \quad\gamma\in\Diff_+(S^1),\\
U(\gamma)xU(\gamma)^{-1}&=x,\quad x\in \A(I), \gamma\in\Diff_+(I^\prime),
\end{align}
where $\Diff_+(I^\prime)$ denotes the subgroup of orientation preserving diffeomorphisms of $S^1$ that are \emph{localized} in $I'$, namely $\gamma\in\Diff_+(S^1)$ such that $\gamma(z)=z$ for all $z\in I$.
\end{enumerate}
\end{defi}

Note that the unitaries $U(\gamma)$ are only defined up to a phase. Moreover, by the second equation above and by Haag duality on $\Hil$, it follows that $U(\gamma) \in \A(I')$ if $\gamma\in\Diff_+(I^\prime)$.

\begin{defi}\label{confsubnetdef}
A \textbf{conformal subnet} of a conformal net $(\A,U,\Omega)$ is a family $\B= \{\B(I) : I\in\mathcal{I}\}$ of non-trivial von Neumann algebras acting on $\Hil$ such that:
\begin{enumerate}
\item[(i)] $\B(I) \subset \A(I)$ for every $ I \in \mathcal{I}$.
\item[(ii)]  $U(g)\B(I)U(g)^{-1} = \B(g I)\,$ for every $I \in \mathcal{I}, g \in \PSL(2,\RR)$.
\item[(iii)] $\B(I_1)\subset \B(I_2)$ for every $I_1,I_2\in\mathcal{I}$ with $I_1\subset I_2$.
\end{enumerate}
\end{defi}

\begin{rmk}\label{confsubnetU}
By \cite[Theorem 6.2.29]{WeiPhD}, \cf \cite[Section 3.4]{CaKaLoWe2018}, a conformal subnet $\B\subset\A$ fulfills also diffeomorphism covariance:
\begin{align}
U(\gamma)\B(I)U(\gamma)^{-1}=\B(\gamma I), \quad I\in\cI, \gamma\in\Diff_+(S^1).
\end{align}
\end{rmk}

We call $\B\subset\A$ a \textbf{conformal inclusion} (or sometimes conformal subnet, when we want to stress the role of $\B$, with abuse of terminology). Note that $\B$ restricted to the Hilbert subspace $\Hil_\B\subset\Hil$ obtained as the closure of $\vee_{I\in\mathcal{I}}\B(I)\Omega$ is a conformal net. Indeed, $\B$ is clearly M\"obius covariant with the same $U$ of $\A$ restricted to $\Hil_\B$. Moreover, by \cite[Theorem 6.2.31]{WeiPhD}, it also admits a strongly continuous projective unitary representation of $\Diff_+(S^1)$ on $\Hil_\B$, that we denote by $U_\B$ for later reference, extending the restriction of $U$ to $\PSL(2,\RR)$ and fulfilling the conditions in Definition \ref{confnet}.

\begin{rmk}\label{stdcondexpandJonesproj}
Let $\B\subset\A$ be a conformal inclusion. By the Bisognano--Wichmann theorem and by Takesaki's theorem \cite{Ta1972}, for every $I\in\cI$, there is a normal faithful conditional expectation on the subfactor $\B(I)\subset\A(I)$, denoted by $E_I:\A(I) \to \B(I)\subset\A(I)$, uniquely determined by the vacuum state preserving condition $\omega_I \circ E_I = \omega_I$. We refer to \cite{St1997} for a concise overview of conditional expectations on von Neumann algebras.
Here $\omega_I := (\Omega, \slot \Omega)$ is the vacuum state of $\A$ restricted to $\A(I)$. The conditional expectation is implemented by the Jones projection $e_I := [\B(I)\Omega]$, \cite{Jo1983}, via the formula $e_I x e_I = E_I(x) e_I$ for every $x\in\A(I)$. Also, $x\in\A(I)$ belongs to $\B(I)$ if and only if $e_I x = x e_I$. By using $\B(I) = E_I(\A(I))$, the Jones projection is equivalently defined as 
\begin{align}\label{Jonesproj}
e_I x \Omega := E_I(x) \Omega, \quad x\in\A(I).
\end{align}

By the Reeh--Schlieder theorem for $\B$ on $\Hil_\B$, the Jones projection is independent of $I\in\cI$ and it coincides with the orthogonal projection onto the Hilbert subspace $\Hil_\B \subset \Hil$. We write $e_\B := e_I$. Consequently, the collection of conditional expectations $E_\B := \{E_I : I\in\cI\}$ is compatible in the sense that $E_I(x) = E_J(x)$ if $I\subset J$, $x\in\A(I)$. Hence $E_\B$ is a \textbf{standard conditional expectation} of $\A$ onto $\B$ in the terminology of \cite[Definition 3.1]{LoRe1995}.
\end{rmk}

\begin{defi}\label{irredinclusion}
We call an inclusion $\B\subset\A$ \textbf{irreducible} if $\B(I)' \cap \A(I) = \CC 1$ for some, hence for all, $I\in\cI$, where the commutant is taken in $\B(\Hil)$. 
\end{defi}

By irreducible conformal subnet we shall mean a conformal subnet in the sense of Definition \ref{confsubnetdef} such that the inclusion is irreducible in the sense of Definition \ref{irredinclusion}. Note that a conformal subnet $\B\subset \A$ is irreducible if and only if it is a \emph{full subsystem} in the sense of \cite[Section 3]{Ca2004} (namely: the \emph{coset net} of $\B$ into $\A$ is trivial) thanks to a result of \cite{Koe2004}.

\begin{rmk}\label{irreduniqueE}
If $\B\subset\A$ is conformal and irreducible, then $E_\B$ is the unique among normal faithful (a priori not necessarily vacuum preserving) conditional expectations of $\A$ onto $\B$, \eg, by \cite[Theorem 5.3]{CoDe1975}.
\end{rmk}

Let $U_{(c,0)}$ be the irreducible strongly continuous projective unitary and positive energy representation of $\Diff_+(S^1)$ with central charge $c$ and lowest weight zero. See \cite{FrQiSh1985}, \cite{GoKeOl1986}, \cite{KaRaBombay}.

\begin{defi}
Let $\Vir_c$ be the \textbf{Virasoro net} with central charge $c$ associated with $U_{(c,0)}$:
\begin{align}
\left(\Vir_c(I):=\{U_{(c,0)}(\gamma): \gamma\in\Diff_+(I)\}^{\prime\prime}, U_{(c,0)},\Omega \right)
\end{align}
for every $I\in\cI$, where $\Omega$ is the lowest weight vector of $U_{(c,0)}$.
\end{defi}

Any conformal net $(\A,U,\Omega)$ contains a copy of $\Vir_c$, for some $c$, as a conformal subnet: 
\begin{align}
\left(\Vir_\A(I) := \{U(\gamma): \gamma\in\Diff_+(I)\}^{\prime\prime}, U,\Omega\right)
\end{align}
for every $I\in\cI$, \cf \cite[Remark 3.8]{Ca2004}. By \cite[Proposition 3.7 (a)]{Ca2004}, the conformal inclusion $\Vir_\A \subset \A$ is automatically irreducible, \ie, $\Vir_\A(I)' \cap \A(I) = \CC 1$ for every $I\in\cI$. Moreover, the Virasoro net is minimal, in the sense it does not contain any non-trivial conformal subnet \cite{Ca1998}. For later use, we denote by $E_{\Vir}$ the standard conditional expectation of $\A$ onto $\Vir_\A$, which is unique by Remark \ref{irreduniqueE}, and by $e_{\Vir}$ the associated Jones projection as in \eqref{Jonesproj}. 

We recall the following:

\begin{prop}\label{irrediffintermediate}
Let $(\A,U,\Omega)$ be a conformal net and $\B\subset\A$ a conformal subnet. Then $\B$ contains $\Vir_\A$, namely it is intermediate $\Vir_\A \subset \B \subset\A$, if and only if the inclusion $\B\subset\A$ is irreducible.

Moreover, the condition $\Vir_\A \subset \B \subset\A$ is equivalent to $\Vir_\B = \Vir_\A$.
\end{prop}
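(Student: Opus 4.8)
The plan is to treat the two equivalences in turn, putting essentially all the work into the implication that irreducibility forces $\Vir_\A\subset\B$. One direction is immediate: if $\Vir_\A\subset\B$ then $\B(I)'\subseteq\Vir_\A(I)'$, whence $\B(I)'\cap\A(I)\subseteq\Vir_\A(I)'\cap\A(I)=\CC 1$ by the automatic irreducibility of $\Vir_\A\subset\A$ \cite[Proposition 3.7(a)]{Ca2004}, so $\B\subset\A$ is irreducible. For the converse I would first record the analogues for $\B$ of the facts used for $\A$: restricted to $\Hil_\B$, $\B$ is a conformal net with diffeomorphism representation $U_\B$, and exactly as noted after Definition \ref{confnet} (covariance together with Haag duality for $\B$ on $\Hil_\B$) one obtains $U_\B(\gamma)\in\B(I)$, viewed as operators on $\Hil_\B$, for every $\gamma\in\Diff_+(I)$.

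The heart of the argument will be to compare, for $\gamma\in\Diff_+(I)$, the implementation $U(\gamma)\in\A(I)$ coming from $\A$ with the one coming from $\B$. Since $\Omega$ is separating for $\B(I)$ (Reeh--Schlieder for $\B$) and $\Hil_\B$ is $\B(I)$-invariant, the compression $b\mapsto b|_{\Hil_\B}$ is a normal isomorphism of $\B(I)$ onto its image; I would then let $W(\gamma)\in\B(I)$ be the unique element with $W(\gamma)|_{\Hil_\B}=U_\B(\gamma)$. As $\gamma$ fixes $I'$ pointwise we have $\gamma I=I$, so for $b\in\B(J)$ with $J\subseteq I$ both $W(\gamma)bW(\gamma)^{-1}$ and $U(\gamma)bU(\gamma)^{-1}$ lie in $\B(I)$ and, by Remark \ref{confsubnetU} and the defining covariance of $U_\B$, have the same compression to $\Hil_\B$; injectivity then forces them to be equal. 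Taking $J=I$ shows that $Z(\gamma):=U(\gamma)^\ast W(\gamma)\in\A(I)$ commutes with $\B(I)$, i.e. $Z(\gamma)\in\B(I)'\cap\A(I)$.

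At this point irreducibility does the rest: $\B(I)'\cap\A(I)=\CC 1$ means $Z(\gamma)$ is a phase, so $U(\gamma)\in\B(I)$ up to a phase and hence $\Vir_\A(I)=\{U(\gamma):\gamma\in\Diff_+(I)\}''\subseteq\B(I)$, proving $\Vir_\A\subset\B$. The same computation yields the ``moreover'': $U(\gamma)$ and $W(\gamma)$ differ only by a phase, so $\Vir_\A(I)=\{W(\gamma)\}''$, which the compression isomorphism carries onto $\{U_\B(\gamma)\}''=\Vir_\B(I)$; thus $\Vir_\B=\Vir_\A$, while the reverse implication $\Vir_\B=\Vir_\A\Rightarrow\Vir_\A=\Vir_\B\subseteq\B$ is trivial since $\Vir_\B$ is by construction a subnet of $\B$.

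I expect the main obstacle to be the middle step rather than the final one: correctly building $W(\gamma)$ inside $\B(I)$ and identifying $\Ad W(\gamma)$ with $\Ad U(\gamma)$ on $\B$ rests on the subnet's own diffeomorphism covariance and on Haag duality on $\Hil_\B$, and throughout one must keep careful track of the projective phases and of the passage between operators on $\Hil$ and their compressions to $\Hil_\B$. Once $Z(\gamma)\in\B(I)'\cap\A(I)$ is in hand, irreducibility finishes the proof in one line.
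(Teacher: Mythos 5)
Your easy direction and your endgame (from $Z(\gamma)\in\B(I)'\cap\A(I)$ to $\Vir_\A\subset\B$) are fine, but the middle step --- which you yourself flag as the main obstacle --- contains a genuine gap. You claim that for $b\in\B(J)$, $J\subseteq I$, $\gamma\in\Diff_+(I)$, the operators $U(\gamma)bU(\gamma)^{-1}$ and $W(\gamma)bW(\gamma)^{-1}$ have the same compression to $\Hil_\B$ ``by Remark \ref{confsubnetU} and the defining covariance of $U_\B$''. Those facts only say that $\Ad U(\gamma)$ maps $\B(J)$ \emph{onto} $\B(\gamma J)$ and that $\Ad U_\B(\gamma)$ maps $\B(J)\restriction_{\Hil_\B}$ \emph{onto} $\B(\gamma J)\restriction_{\Hil_\B}$; set-wise equality of images never forces two isomorphisms to agree elementwise (an isomorphism of $\M$ onto $\N$ composed with any automorphism of $\N$ has the same image). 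Nor can you close the gap by invoking the uniqueness of the diffeomorphism symmetry (Remark \ref{uniqueUofDiff}): that theorem compares strongly continuous projective \emph{unitary representations} on $\Hil_\B$, whereas the compression of $\Ad U(\gamma)$ to $\B\restriction_{\Hil_\B}$ is not known, at this stage, to be implemented by any unitary on $\Hil_\B$ --- $U(\gamma)$ preserves $\Hil_\B$ essentially only once one knows $U(\gamma)\in\B(I)$, which is exactly what you are trying to prove. So the step is close to circular: the elementwise identification of the two $\Diff_+(S^1)$-actions on $\B$ is true, but it is the substance of the deeper results \cite[Theorem 6.2.31, Corollary 6.3.7]{WeiPhD} and of the factorization of $U(\gamma)$ into a part in $\B(I)$ and a part in the coset \cite{Koe2004}, which the paper cites only as an \emph{alternative} to its own argument.

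The paper's proof avoids comparing the two implementations altogether and is self-contained: with $E_I$ the vacuum-preserving conditional expectation onto $\B(I)$, the $\B(I)$-bimodularity of $E_I$ and the fact that $\Ad U(\gamma)$ restricts to an automorphism of $\B(I)$ give $U(\gamma)^{-1}E_I(U(\gamma))\in\B(I)'\cap\A(I)=\CC 1$, hence $E_I(U(\gamma))=\lambda U(\gamma)$ with $\lambda\in\{0,1\}$ (apply $E_I$ twice); the case $\lambda=0$ is then excluded by a connectedness argument, writing $\Vir_\A(I)$ in terms of the one-parameter groups $e^{itT(f)}$ and observing that $A_f=\{t\in\RR : E_I(e^{itT(f)})=e^{itT(f)}\}$ is non-empty, closed and open in $\RR$. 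If you want to keep your strategy, you must actually prove that the compressed action of $\Ad U(\gamma)$ on $\B$ coincides with $\Ad U_\B(\gamma)$, which requires the structural results above rather than the covariance statements you appeal to; alternatively, switch to the conditional-expectation route, which needs nothing about $U_\B$ for the hard implication.
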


\begin{proof}
The fact that $\Vir_\A \subset \B$ implies the irreducibility of $\B\subset\A$ is immediate from the previous discussion. Indeed, $\B(I)' \cap \A(I) \subset \Vir_\A(I)' \cap \A(I) = \CC 1$. 
The converse implication can be proven as follows. By Haag duality, $U(\gamma)\in\A(I)$ for every $\gamma\in\Diff_+(I)$, $I\in\cI$. By Remark \ref{confsubnetU}, $\Ad U(\gamma) = U(\gamma) \slot U(\gamma)^{-1}$ is an automorphism of $\B(I)$. Let $E_\B = \{E_I : I\in\cI\}$ be the standard conditional expectation of $\A$ onto $\B$. Then $U(\gamma)^{-1} E_I(U(\gamma)) \in\A(I) \cap \B(I)'$, as one can check using the $\B(I)$-bimodularity of $E_I$, hence it holds $E_I(U(\gamma)) = \lambda U(\gamma)$ for some $\lambda\in\CC$, by the irreducibility assumption. Moreover, either $E_I(U(\gamma)) = U(\gamma)$, if $U(\gamma) \in \B(I)$, or $E_I(U(\gamma)) = 0$, otherwise. We can exclude the second case as follows.
Let $f\in C^\infty (S^1)$ be a smooth real valued function on $S^1$ and let $T(f)$ be the stress-energy tensor associated with $\Vir_\A$, see, \eg, \cite[Section 3.2, 3.3.B]{FeHo2005} and \cite[Section 2]{CDIT2021} for a short review. Note that $\Vir_\A(I)$ is generated as a von Neumann algebra by elements of the form $e^{itT(f)}$, $t\in\RR$, with $f\in C^\infty (S^1)$ having support inside $I$, see, \eg,  \cite[\pp 267--268]{Ca2004}. Note also that $E$ is (ultra)weakly/strongly operator continuous, being completely positive and normal, see, \eg, \cite[Proposition III.2.2.2]{BlaBook}, and $\B(I)$-bimodular by definition.
Hence it suffices to show that $E(e^{it T(f)}) = e^{itT(f)}$ for every $t \in\RR$ and for every $f \in C^\infty(S^1)$ with support in $I$. The subset of $\RR$ given by $A_f := \{t \in\RR: E(e^{it T(f)}) = e^{itT(f)}\}$ is closed. But $A_f$ is also open, since, as argued above, its complement is ${A_f}^c := \{t \in\RR : E(e^{itT(f)}) = 0\}$, which is again closed. Since $A_f$ is non-empty, as it contains the point $t = 0$, by connectedness we conclude that $A_f = \RR$.
Thus $U(\gamma) \in \B(I)$ for every $\gamma\in\Diff_+(I)$, and we conclude that $\Vir_\A\subset\B$. The statement just proven that the irreducibility of $\B\subset\A$ implies $\Vir_\A \subset \B$ also follows from \cite[Theorem 6.2.31]{WeiPhD}, see \cite[Corollary 6.3.7]{WeiPhD}.

For the second statement, if $\Vir_\B = \Vir_\A$ then $\B$ is intermediate. Conversely, if $\Vir_\A \subset \B \subset\A$, then $U$ preserves the Hilbert subspace $\Hil_\B\subset\Hil$. Hence it must coincide with the $U_\B$ introduced in Remark \ref{confsubnetU} by the uniqueness result that we shall recall in Remark \ref{uniqueUofDiff}.  
\end{proof}

\begin{rmk}
Conformal inclusions with \emph{finite Jones index} are automatically irreducible \cite[Lemma 14]{Lo2003}, \cf \cite[Corollarly 3.6]{BcEv1998-I}, \cite[Corollary 2.7]{DLR2001}. We shall recall in the next section the definition of Jones index for conformal inclusions (Definition \ref{finindexsubnets}). 
Hence Proposition \ref{irrediffintermediate} recovers \cite[Proposition 6.2]{KaLo2004} in the finite index case.
\end{rmk}

Typical examples of irreducible conformal subnets come from finite or compact group actions on conformal nets \cite{Xu2000orbi}, \cite[Section 2]{Xu2005}, \cite[Section 3]{Ca2004}, inspired by the study of global gauge group symmetries in 3+1 dimensional Minkowski spacetime \cite[Section II]{DoHaRo1971}, \cite{DoRo1990}. 

\begin{defi}\label{groupaction}
A finite or compact group $G$ \textbf{acts properly} on $(\A,U,\Omega)$ if there is a faithful strongly continuous unitary representation $V : G \to \cU(\Hil)$ such that:
\begin{enumerate}
\item[(i)] $V(g) \A(I) V(g)^{-1} = \A(I)$ for every $I\in\cI$, $g\in G$.
\item[(ii)] $V(g) \Omega = \Omega$ for every $g\in G$.
\item[(iii)] $U(h) V(g) = V(g) U(h)$ for every $g\in G$, $h\in\PSL(2,\RR)$.
\end{enumerate}
\end{defi}

\begin{rmk}
In other words, $\Ad V(g) = V(g) \slot V(g)^{-1}$ is a vacuum preserving automorphism $\A(I) \to \A(I)$ for every $I\in\cI$, commuting with the M\"obius action. The condition (iii) above follows from (i) and (ii), \cite[Appendix II]{GaFr1993}, \cite[Section 3]{Xu2000orbi}. We shall provide a proof of this statement in a more general context in Section \ref{discrete}.
\end{rmk}

By setting $\A^G(I) := \A(I) \cap V(G)'$ one obtains a conformal subnet of $(\A,U,\Omega)$, called the \textbf{orbifold} (or \textbf{fixed point subnet}) of $\A$ with respect to $G$. The subspace $\Hil_{\A^G} \subset \Hil$ defined as the closure of $\vee_{I\in\mathcal{I}}\A^G(I)\Omega$ coincides with the subspace of $V$-invariant vectors in $\Hil$, and the inclusion $\A^G \subset \A$ is automatically irreducible \cite[Proposition 2.1]{Xu2001siena}, \cite[Proposition 2.1]{Ca1999}. 
Proposition \ref{irrediffintermediate} implies that $\Vir_\A \subset \A^G$, or equivalently that $U(\gamma) V(g) = V(g) U(\gamma)$ for every $g\in G$, $\gamma\in\Diff_+(I)$. Thus also for every $\gamma\in\Diff_+(S^1)$, as $\Diff_+(S^1)$ is algebraically simple, see, \eg, \cite{Mi1984}, hence generated by localized diffeomorphisms. The commutation relation $U(\gamma) V(g) = V(g) U(\gamma)$ is unambiguously written $U(\gamma) V(g) U(\gamma)^{-1} = V(g)$, as the unitaries $U(\gamma)$ are only defined up to a phase.

\begin{defi}\label{AutA}
Let $(\A,U,\Omega)$ be a conformal net. Let $\Aut(\A)$ be the set of \textbf{automorphisms} of $\A$. Namely, $\alpha\in\Aut(\A)$ if it is of the form $\alpha = \Ad V$ for some $V\in\cU(\Hil)$ such that $V \A(I) V^{-1} = \A(I)$ for every $I\in\cI$, $V \Omega = \Omega$ and $U(\gamma) V U(\gamma)^{-1} = V$ for every $\gamma\in\Diff_+(S^1)$.
\end{defi}

\begin{rmk}\label{uniqueUofDiff}
Under a further regularity assumption on the conformal net, it is known by \cite[Corollary 5.8]{CaWe2005} that the first two conditions on $V$ in the previous definition imply the third. This is a consequence of the uniqueness, when it exists, of the extension of $U$ from $\PSL(2,\RR)$ to $\Diff_+(S^1)$. Both statements are true in full generality by \cite[Theorem 6.1.9]{WeiPhD}, \cf \cite[Theorem 6.10]{CaKaLoWe2018}.
\end{rmk}

\section{Representations of conformal nets}\label{RepnConfNets}

In this section, we recall the definition of representation, following Doplicher--Haag--Roberts \cite{DoHaRo1971}, \cite{DoHaRo1974} in the special case of conformal nets. See \cite{FrReSc1989}, \cite{FrReSc1992}, \cite{GuLo1996}, \cite{DVIT20}.

By a representation $\pi$ of a von Neumann algebra $\M$ on a Hilbert space $\Hil_\pi$, we mean a normal unital *-algebra morphism $\pi:\M\to\B(\Hil_\pi)$, see, \eg, \cite[Chapter III.3]{Ta1}. In our case at hand where $\M$ is an infinite factor realized on a separable Hilbert space $\Hil$, by \cite[Theorem V.5.1]{Ta1}, if $\Hil_\pi$ is also separable then $\pi$ is automatically normal.

\begin{defi}
Let $(\A,U,\Omega)$ be a conformal net.  A \textbf{representation} $\pi$ of $(\A,U,\Omega)$ is a collection: 
\begin{align}
\pi=\{ \pi_I : I\in\mathcal{I}\},
\end{align}
where each $\pi_I$ is a representations of $\A(I)$ on a common Hilbert space $\Hil_\pi$, fulfilling the compatibility condition $\pi_{I_2}\restriction_{\A(I_1)}=\pi_{I_1}$ for every $I_1,I_2\in \mathcal I$ with $I_1\subset I_2$. 

A representation $\pi$ is called \textbf{irreducible} if $\left(\bigvee_{I\in\mathcal{I}}\pi_I(\A(I))\right)' = \CC 1_{\Hil_\pi}$.
\end{defi}

Two representations $\pi$ and $\sigma$ of $\A$ are unitarily equivalent if there is a unitary $V:\Hil_\pi \to \Hil_\sigma$ intertwining $\pi$ and $\sigma$, \ie, such that $V\pi_I(x) = \sigma_I(x) V$ for every $I\in\cI$, $x\in\A(I)$.
Due to the type $\III$ property of local algebras, by \cite[Theorem V.3.2]{Ta1}, every separable representation $\pi$ is locally unitarily equivalent to the defining \textbf{vacuum representation} $\pi_0$ on $\Hil$. Namely, for every $I\in\cI$ there is a unitary $V_I : \Hil_\pi \to \Hil$ such that $V_I \pi_I(x) = x V_I$ for every $x\in\A(I)$. In particular, every $\pi_{I_1}$ is unitarily equivalent to every other $\pi_{I_2}$ by means of a unitary intertwiner depending on $I_1$ and $I_2$. The morphism $\rho_{I'} := V_I \pi_{I'}(\slot) V_I^{-1} : \A(I') \to \B(\Hil)$ can be shown to be an \textbf{endomorphism} of $\A(I')$ by Haag duality. The morphism $\rho_{I}(x) := V_I \pi_{I}(x) V_I^{-1} : \A(I) \to \B(\Hil)$ instead is the trivial endomorphism, $\rho_{I}(x) = x$, for every $x\in\A(I)$. The collection $\rho := \{\rho_J : J\in\mathcal{I}\}$ defined by $\rho_{J} := V_I \pi_{J}(\slot) V_I^{-1} : \A(J) \to \B(\Hil)$ is a representation of $\A$ on $\Hil$, unitarily equivalent to $\pi$. Due to the trivialization property $\rho_{I}(x) = x$, the representation $\rho$ is called \textbf{localized} in $I^\prime$.

The category of representations of $\A$ and intertwiners is linear, unitary and \Cstar, see \cite{EGNO15}, \cite{Mue10tour}. A crucial consequence of Haag duality is that intertwiners between localized representations belong to local algebras. This allows to endow the representation category with a \textbf{tensor} (or \textbf{monoidal}) product operation, locally defined on objects by the composition of endomorphisms. In particular, one can consider the (intrinsic) tensor \Cstar-categorical notion of \textbf{dimension} for representations (not the Hilbert space dimension of $\Hil_\pi$), see \cite{LoRo1997}, \cite{GiLo2019}. The intrinsic dimension, denoted by $d(\pi)$, preserves direct sums and tensor products of representations, $d(\pi) \in [1,\infty]$ and $d(\pi_0) = 1$. Moreover, the locality assumption on $\A$ endows the representation category with an additional unitary \textbf{braided} structure given by the DHR braiding, see \cite{BiKaLoRe2014-2}, \cite{GiRe2018}.

Let now $\B\subset \A$ be a conformal subnet as in Definition \ref{confsubnetdef}. The definition given below coincides with the one of \emph{compact type} inclusion of conformal nets considered in \cite[Section 3]{Ca2004}. It should be compared with \cite[Section 5]{LoRe1995} and with the seemingly weaker notion of discreteness considered in \cite[Section 6]{DeGi2018}\footnote{An inclusion of (not necessarily chiral, nor conformal) nets $\B\subset\A$ is called discrete in \cite[Definition 6.7]{DeGi2018} if every subfactor $\B(I)\subset\A(I)$, $I\in\cI$, is discrete in the sense of \cite[Definition 3.7]{IzLoPo1998}.}.

\begin{defi}\label{discretesubnets}
Denote by $\iota$ the defining representation of $\B$ on $\Hil$, \ie, the restriction to $\B$ of the vacuum representation of $\A$.
An irreducible conformal inclusion $\B\subset\A$ is said to be \textbf{discrete} if $\iota$ decomposes as a countable direct sum of irreducible representations of $\B$, all with finite dimension. 
\end{defi}

If $\B\subset\A$ is irreducible and discrete, then every \emph{subfactor} $\B(I)\subset\A(I)$, $I\in\cI$, is irreducible by the very Definition \ref{irredinclusion}, \emph{discrete} in the sense of Izumi--Longo--Popa \cite[Definition 3.7]{IzLoPo1998}, and \emph{(braided) local} in the sense of \cite[Definition 2.16]{BiDeGi2021} with respect to the DHR braiding, and type $\III$. See \cite[Proposition 9.20]{BiDeGi2021}. 
See also \cite[Section 5]{DeGi2018}, \cite[Section 2]{BiDeGi2021} for some results and further references on discrete and (braided) local subfactors, and \cite[Section 1 and 2.3]{BiDeGi2022} for a concise overview. In the finite index case (see below), locality for subfactors is also known as \emph{chiral locality} \cite{BcEv1998-I}, \cite{BcEvKa1999} or \emph{commutativity} of the associated Q-system \cite{BiKaLoRe2014-2}.

If, in addition, $\iota$ has \emph{finitely many} irreducible direct summands, then every $\B(I)\subset\A(I)$ has \emph{finite Jones/minimal index} \cite{Jo1983}, \cite{Lo1989}, see \cite[Section 1]{Gi2022} for an overview of the various notions of index for subfactors and their relations. The index value $[\A(I):\B(I)]$ is independent of $I\in\cI$ by conformal covariance and we denote it by $[\A:\B]$.

\begin{defi}\label{finindexsubnets}
A (necessarily irreducible) conformal inclusion $\B\subset\A$ is said to have \textbf{finite index} if $\iota$ has finitely many irreducible direct summands, all with finite dimension. 
\end{defi}

\section{Quantum operations on conformal nets}\label{QuOponConfNets}

Recall the definition of unital completely positive map on a von Neumann algebra $\M$, see, \eg, \cite[Chapter IV.3]{Ta1}. A linear map $\phi: \M\to\M$ is called unital if $\phi(1)=1$. It is called positive if $x\geq 0$, $x\in\M$, or equivalently $x = y^*y$, $y\in\M$, implies $\phi(x) \geq 0$. Lastly, it is called completely positive if the ampliation $\phi \otimes \id_n\colon \M\otimes M_n(\CC)\to\M\otimes M_n(\CC)$ is positive for every $n\in\NN$, where $M_n(\CC)$ are the complex $n\times n$ matrices. Special examples of unital completely positive maps are given by conditional expectations and automorphisms. We refer to \cite{Pa2002} for more background.

\begin{defi}\label{UCPA}
Let $(\A,U,\Omega)$ be a conformal net. Let $\UCP(\A)$ be the set of \textbf{unital completely positive maps} on $\A$. Namely, the elements of $\UCP(\A)$ are collections:
\begin{align}
\phi=\{\phi_I : I\in\cI\},
\end{align}
where each $\phi_I:\A(I)\rightarrow\A(I)$ is a normal faithful\footnote{Normality and faithfulness of $\phi_I$ follow from the vacuum preserving property (ii), \cf \cite[Proposition 3.1]{AcCe1982}, \cite[Section 3]{BaCaMu2016} and \cite[Theorem 1]{To1959}, because the vacuum state is normal and faithful on every local algebra by the Reeh--Schlieder theorem.} unital completely positive map such that:
\begin{enumerate}
\item[(i)] \textbf{Compatibility}: $\phi_{I_2} \restriction_{\A(I_1)}=\phi_{I_1}$ for every $I_1,I_2\in \cI$ with $I_1\subset I_2$.
\item[(ii)] \textbf{Vacuum preserving}: $\omega_I = \omega_I \circ \phi_I$ on $\A(I)$ for every $I\in\cI$, where $\omega_I := (\Omega, \slot \Omega)$ is the vacuum state restricted to $\A(I)$.
\item[(iii)] \textbf{Conformal symmetry}: $\Ad U(g) \circ \phi_I\circ \Ad U(g)^{-1}=\phi_{g I}$ for every $I\in\cI$, $g\in\mathrm{PSL}(2,\RR)$, and $\phi_I \restriction_{\Vir_\A(I)} = \id$
for every $I\in\cI$.
\end{enumerate}

We endow $\UCP(\A)$ with the coarsest topology such that the localization maps $\ell_I : \phi\mapsto \phi_I$ are continuous for every $I\in\cI$, where we consider the pointwise ultraweak operator topology, (also called \emph{BW topology}) on the set of unital completely positive maps $\A(I)\rightarrow\A(I)$ for fixed $I\in\cI$.
\end{defi}

Let $\phi\in\UCP(\A)$. Every $\phi_I$ is implemented by an operator $V_{\phi_I} : \Hil \to \Hil$, where $\Hil$ is the vacuum Hilbert space of $\A$, defined as the closure of the linear map:
\begin{align}\label{Vphi}
V_{\phi_I} x\Omega := \phi_I(x)\Omega, \quad x\in\A(I).
\end{align}
By the Reeh--Schlieder theorem, the vectors $x\Omega$ are dense in $\Hil$. By the Kadison--Schwarz inequality $\phi_I(x^*x) \geq \phi_I(x)^* \phi_I(x)$, \cite{Ka1952}, it follows that $V_{\phi_I}$ is bounded. Moreover, $V_{\phi_I} \Omega = \Omega$ and $\|V_{\phi_I}\| = 1$. See \cite[Section 2]{NiStZs2003}, \cite[Section 2.5]{BiDeGi2021}.

\begin{lem}\label{Vphiglobal}
Let $\phi\in\UCP(\A)$. The operators $V_{\phi_I}$ implementing $\phi_I$ are independent of $I\in\cI$. 

Hence we write $V_\phi := V_{\phi_I}$.
\end{lem}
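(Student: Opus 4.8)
The plan is to reduce the statement to two ingredients: first, that the claim holds for \emph{nested} intervals as an immediate consequence of the compatibility condition (i) in Definition \ref{UCPA}; and second, a connectivity property of the poset $\cI$ that upgrades the nested case to arbitrary pairs of intervals. Throughout I use that each $V_{\phi_I}$ is a well-defined bounded operator of norm one, as recalled just before the statement, and that $\A(I)\Omega$ is dense in $\Hil$ for every $I$ by the Reeh--Schlieder theorem.

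First I would treat the case $I_1 \subset I_2$. For $x \in \A(I_1) \subset \A(I_2)$, compatibility $\phi_{I_2}\restriction_{\A(I_1)} = \phi_{I_1}$ gives
\begin{align}
V_{\phi_{I_2}} x\Omega = \phi_{I_2}(x)\Omega = \phi_{I_1}(x)\Omega = V_{\phi_{I_1}} x\Omega.
\end{align}
Thus the two bounded operators $V_{\phi_{I_1}}$ and $V_{\phi_{I_2}}$ agree on the subspace $\A(I_1)\Omega$, which is dense in $\Hil$ by Reeh--Schlieder applied to $I_1$. Hence $V_{\phi_{I_1}} = V_{\phi_{I_2}}$ whenever $I_1 \subset I_2$.

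Next I would run a connectivity argument on $\cI$. Declare $I_1$ and $I_2$ equivalent when $V_{\phi_{I_1}} = V_{\phi_{I_2}}$; by the previous step any two comparable intervals (one contained in the other) are equivalent, so it suffices to show that any $I_1, I_2 \in \cI$ are linked by a finite chain of pairwise comparable intervals. If $I_1 \cap I_2 \neq \emptyset$, pick a connected component $K$ of $I_1 \cap I_2$: then $K \in \cI$ with $K \subset I_1$ and $K \subset I_2$, giving the chain $I_1 \supset K \subset I_2$. If instead $I_1 \cap I_2 = \emptyset$, enlarge $I_1$ to a proper interval $I_3 \in \cI$ with $I_1 \subset I_3$ and $I_3 \cap I_2 \neq \emptyset$ (possible since $I_1, I_2$ are disjoint proper arcs, so $I_1$ can be grown across the adjacent gap to overlap $I_2$ while remaining non-dense), and then combine $I_1 \subset I_3$ with the first case applied to $I_3$ and $I_2$. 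In either case the chain yields $V_{\phi_{I_1}} = V_{\phi_{I_2}}$, so the implementing operator is independent of $I$ and we may write $V_\phi := V_{\phi_I}$.

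The only mild obstacle is the final, purely geometric connectivity step, which is however standard in the algebraic QFT literature; everything else is immediate from the compatibility condition together with the Reeh--Schlieder density of $\A(I)\Omega$ and the boundedness of the $V_{\phi_I}$.
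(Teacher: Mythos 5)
Your proof is correct and follows essentially the same route as the paper's: establish the nested case from compatibility plus Reeh--Schlieder density, then handle overlapping intervals via a common subinterval and disjoint intervals by enlarging one until it overlaps the other. The only difference is that you spell out the density argument and the circle geometry in more detail than the paper, which states these steps tersely.
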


\begin{proof}
By the Reeh--Schlieder theorem, the closed complex span of the vectors $x\Omega$, $x\in\A(I)$, is $\Hil$, irrespectively of the choice of $I\in\cI$.
If $I_1,I_2\in\cI$ with $I_1\subset I_2$, then $V_{\phi_{I_1}} = V_{\phi_{I_2}}$ by the compatibility requirement (i). If $I_1,I_2\in\cI$ have some overlap, take $I_3\in\cI$ with $I_3 \subset I_1, I_2$ and conclude as before. If $I_1,I_2\in\cI$ are arbitrary, increase one of the two until they overlap, hence $V_{\phi_{I_1}} = V_{\phi_{I_2}}$.
\end{proof}

In the notation of Section \ref{prelim}, the standard conditional expectation $E_{\Vir}$ of $\A$ onto $\Vir_\A$ belongs to $\UCP(\A)$. Its implementing operator \eqref{Vphi} is the Jones projection: $e_{\Vir} = V_{E_{\Vir}}$.

\begin{lem}\label{phiinUCPAisDiffcovar}
Let $\phi$ be a collection of maps as in Definition \ref{UCPA}, fulfilling all the properties except for the $\Vir_\A$-fixing condition (namely: $\phi_I \restriction_{\Vir_\A(I)} = \id$ for every $I\in\cI$), and let $I\in\cI$ be fixed. Then $\phi_I \restriction_{\Vir_\A(I)} = \id$ is equivalent to $\Vir_\A(I)$-bimodularity (namely: $\phi_I(xyz) = x\phi_I(y)z$ for every $x,z\in\Vir_\A(I)$, $y\in\A(I)$) and also equivalent to $V_{\phi_I}\in\Vir_\A(I)'$.

In particular, if $\phi \in \UCP(\A)$, then $U(\gamma) V_\phi U(\gamma)^{-1} = V_\phi$ for every $\gamma\in\Diff_+(S^1)$. 
\end{lem}

\begin{proof}
The non-trivial implication in the first equivalence follows from Choi's multiplicative domain theorem \cite{Ch1974}. For the second equivalence, assuming $\Vir_\A(I)$-bimodularity, observe that $V_{\phi_I} xy\Omega = \phi_I(xy)\Omega = x\phi_I(y)\Omega = xV_{\phi_I} y \Omega$ for every $x\in\Vir_\A(I)$, $y\in\A(I)$. Hence $V_{\phi_I} x = x V_{\phi_I}$ by the cyclicity of $\Omega$. Vice versa, $V_{\phi_I} x = x V_{\phi_I}$ implies that $\phi_I(xy) = x\phi_I(y)$ because $\Omega$ is separating.
The last statement follows from Lemma \ref{Vphiglobal} and from the fact that $\Diff_+(S^1)$ is generated by localized diffeomorphisms.
\end{proof}

Abstracting from the present setting, let $\M\subset\B(\Hil)$ be a von Neumann algebra and let $\Omega\in\Hil$ be a cyclic and separating unit vector for $\M$. Let $\omega := (\Omega, \slot \Omega)$ be the associated normal faithful state on $\M$. 
In \cite{BiDeGi2021}, \cite{BiDeGi2022}, we considered a notion of adjoint for $\omega$-preserving unital completely positive maps $\phi : \M \to \M$ with respect to $\omega$, introduced in \cite[Section 6]{AcCe1982}. When it exists, the \textbf{$\omega$-adjoint} of $\phi$ is the unique $\omega$-preserving unital completely positive map $\phi^\sharp:\M \to \M$ determined by the relation:
\begin{align}\label{equationACadjoint}
\omega(x\phi(y))=\omega(\phi^\sharp(x)y), \quad x,y\in\M.
\end{align}

The existence of the $\omega$-adjoint is characterized as follows in terms of the Tomita--Takesaki modular operator $\Delta$ and conjugation $J$ of $\M$ with respect to $\omega$:

\begin{prop}[{\cite[Proposition 6.1]{AcCe1982}}]\label{ACadjoint}
Let $\phi : \M \to \M$ be an $\omega$-preserving unital completely positive map.
Then the following are equivalent:
\begin{enumerate}
\item[$(1)$] $\phi$ admits an $\omega$-adjoint.
\item[$(2)$] $V_{\phi} \Delta^{it} = \Delta^{it} V_{\phi}$ for every $t\in\RR$.
\item[$(3)$] $V_{\phi} J = J V_{\phi}$.
\end{enumerate}
The condition (2) is equivalent to $\phi \circ \Ad \Delta^{it} = \Ad \Delta^{it} \circ \phi$ for every $t\in\RR$.
\end{prop}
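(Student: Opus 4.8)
The plan is to transport everything to the vacuum Hilbert space through the implementing contraction $V_\phi$ of \eqref{Vphi} and to read each condition as a commutation relation with the Tomita--Takesaki data $S=J\Delta^{1/2}$. Two facts hold for \emph{every} $\omega$-preserving unital completely positive $\phi$ and need no hypothesis. First, since $V_\phi$ is a contraction fixing the unit vector $\Omega$, the equality case of Cauchy--Schwarz in $1=(\Omega,V_\phi\Omega)=(V_\phi^*\Omega,\Omega)$ forces $V_\phi^*\Omega=\Omega$. Second, $V_\phi$ always commutes with $S$: for $x\in\M$, using that a positive map is $*$-preserving, $SV_\phi x\Omega=S\phi(x)\Omega=\phi(x)^*\Omega=\phi(x^*)\Omega=V_\phi x^*\Omega=V_\phi Sx\Omega$, and as $\M\Omega$ is a core for the closed operator $S$ and $V_\phi$ is bounded this extends to $V_\phi S\subseteq SV_\phi$. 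I also dispose of the easy last assertion: from $\Delta^{it}\Omega=\Omega$ one has $\Delta^{it}x\Omega=\sigma_t(x)\Omega$ with $\sigma_t=\Ad\Delta^{it}$, so evaluating $V_\phi\Delta^{it}$ and $\Delta^{it}V_\phi$ on the dense set $\M\Omega$ and using that $\Omega$ is separating shows $(2)$ is equivalent to $\phi\circ\sigma_t=\sigma_t\circ\phi$ for all $t$.

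Next I would translate condition $(1)$. Rewriting both sides of \eqref{equationACadjoint} through $\omega(ab)=(a^*\Omega,b\Omega)$ turns it into $(V_\phi^*x^*\Omega,y\Omega)=(\phi^\sharp(x)^*\Omega,y\Omega)$ for all $y$; since $\M\Omega$ is dense and $\phi^\sharp$ is $*$-preserving, this is exactly $\phi^\sharp(x)\Omega=V_\phi^*x\Omega$, i.e. $V_{\phi^\sharp}=V_\phi^*$. Hence $(1)$ holds if and only if $V_\phi^*$ maps $\M\Omega$ into $\M\Omega$ and the induced map is completely positive; unitality and the $\omega$-preserving property then come for free from $V_\phi^*\Omega=\Omega$ and $V_\phi\Omega=\Omega$.

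With these preparations the formal implications are $(2)\Leftrightarrow(3)$ and $(1)\Rightarrow(2)$. For $(2)\Leftrightarrow(3)$ I would combine $V_\phi S=SV_\phi$ with the standard relations $J\Delta^{it}=\Delta^{it}J$ and $J\Delta^{1/2}J=\Delta^{-1/2}$. If $(2)$ holds then $V_\phi$ commutes with $\Delta^{1/2}$ on $\Dom(\Delta^{1/2})$, so $J\Delta^{1/2}V_\phi=V_\phi J\Delta^{1/2}$ becomes $JV_\phi\Delta^{1/2}=V_\phi J\Delta^{1/2}$, and since $\Delta^{1/2}$ has dense range this forces $JV_\phi=V_\phi J$, i.e. $(3)$; conversely $(3)$ turns $S$-commutation into $\Delta^{1/2}V_\phi=V_\phi\Delta^{1/2}$, whence $V_\phi$ commutes with every $\Delta^{it}$. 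For $(1)\Rightarrow(2)$, note that $V_\phi^*=V_{\phi^\sharp}$ is itself the implementer of an $\omega$-preserving unital completely positive map, so by the second preliminary fact it too commutes with $S$; taking adjoints in $V_\phi^*S\subseteq SV_\phi^*$ gives $V_\phi S^*\subseteq S^*V_\phi$, and together with $V_\phi S\subseteq SV_\phi$ this makes $V_\phi$ commute with $\Delta=S^*S$, hence with every $\Delta^{it}$, which is $(2)$.

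The genuine obstacle is $(2)\Rightarrow(1)$: I must actually produce $\phi^\sharp$, that is, show $V_\phi^*$ carries $\M\Omega$ into $\M\Omega$ with a completely positive induced map. Commutation of $V_\phi^*$ with $S$ alone only places $V_\phi^*x\Omega$ in $\Dom(S)=\Dom(\Delta^{1/2})$, which is strictly larger than $\M\Omega$, so one must upgrade this to \emph{left-boundedness}: a vector $\xi$ lies in $\M\Omega$ precisely when $\|a'\xi\|\le C\|a'\Omega\|$ for all $a'\in\M'$. Writing $a'=JbJ$ and moving $J$ past $V_\phi^*$ (legitimate since $(2)$ yields $(3)$, hence $JV_\phi^*=V_\phi^*J$) reduces the required estimate to the right-bounded side in $\M'$, and it is exactly here that the completely positive (Stinespring) structure of $\phi$ — unused so far, only positivity having entered — must be invoked; an equivalent route is to construct $\phi^\sharp$ from a $\sigma_t$-covariant Stinespring dilation of $\phi$, or as the spatial Radon--Nikodym/Connes-cocycle derivative of the normal functional $y\mapsto\omega(x\phi(y))$, whose membership in $\M$ rather than in a larger algebra is guaranteed precisely by the modular commutation $(2)$; complete positivity of $\phi^\sharp$ is then read off the same dilation. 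This existence-and-regularity step is the crux, with the other equivalences reducing to the Tomita--Takesaki relations.
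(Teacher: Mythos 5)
Most of what you prove is correct and cleanly argued: the preliminary facts ($V_\phi^*\Omega=\Omega$, and $V_\phi S\subseteq SV_\phi$ for every $\omega$-preserving positive unital map), the translation of $(1)$ into ``$V_{\phi^\sharp}=V_\phi^*$, \ie $V_\phi^*$ maps $\M\Omega$ into $\M\Omega$ and the induced map is completely positive'', the equivalence $(2)\Leftrightarrow(3)$ via $S=J\Delta^{1/2}$ and density of the range of $\Delta^{1/2}$, the implication $(1)\Rightarrow(2)$ via $V_\phi\Delta\subseteq\Delta V_\phi$ with $\Delta=S^*S$, and the equivalence of $(2)$ with $\phi\circ\Ad\Delta^{it}=\Ad\Delta^{it}\circ\phi$. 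For the record, the paper itself contains no proof of this Proposition: it is quoted from Accardi--Cecchini \cite{AcCe1982}, so your attempt can only be judged against what a complete proof requires.

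Judged that way, there is a genuine gap, which you flag yourself: the implication $(2)\Rightarrow(1)$ is never proved, so what you actually establish is $(1)\Rightarrow(2)\Leftrightarrow(3)$ plus the final remark, not the stated equivalence. Your last paragraph is a list of candidate strategies, not an argument; in particular the assertion that the Radon--Nikodym derivative of $y\mapsto\omega(x\phi(y))$ lies in $\M$ ``precisely because of $(2)$'' is the very claim to be proved, and this functional is not even positive for $x\in\M_+$ in general (for $x\in\M$, $x^{1/2}$ does not commute past $\phi(y)$), so no Radon--Nikodym theorem applies to it directly. The way to close the gap, essentially the route of \cite{AcCe1982}, is in two steps. (a) With no modular hypothesis, build the commutant dual $\phi':\M'\to\M'$: for $x'\in\M'_+$ the functional $y\mapsto(\Omega,\phi(y)x'\Omega)=({x'}^{1/2}\Omega,\phi(y){x'}^{1/2}\Omega)$ on $\M$ \emph{is} positive and normal, because ${x'}^{1/2}$ commutes with $\phi(y)$, and it is dominated by $\|x'\|\,\omega$; the commutant Radon--Nikodym theorem then yields a unique $\phi'(x')\in\M'_+$ with $(\Omega,\phi(y)x'\Omega)=(\Omega,y\,\phi'(x')\Omega)$ for all $y\in\M$. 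This $\phi'$ is normal, unital, $\omega$-preserving, completely positive (the matrix-level version of the same computation, which is where complete positivity of $\phi$ finally enters), and satisfies $\phi'(x')\Omega=V_\phi^*x'\Omega$, so $V_\phi^*$ always maps $\M'\Omega$ into $\M'\Omega$. (b) Now use the modular symmetry: assuming $(2)$, hence $(3)$, the map $x\mapsto J\phi'(Jx^*J)^*J$ is a normal unital completely positive $\omega$-preserving map on $\M=J\M'J$, and the commutation of $V_\phi^*$ with $J$ and with $\Delta^{\pm 1/2}$ shows it is implemented by $V_\phi^*$; by your own translation of $(1)$ it is therefore the desired $\phi^\sharp$. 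Step (a) is the analytic content your sketch postpones, and step (b) is where conditions $(2)$--$(3)$ are actually consumed; without both, the equivalence is incomplete.
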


Let now $\phi_I : \A(I) \to \A(I)$ be associated with $\phi\in\UCP(\A)$, for some $I\in\cI$, and let $\omega_I = (\Omega,\slot\Omega)$ be the vacuum state restricted to $\A(I)$ as before. By combining the Bisognano--Wichmann theorem with Proposition \ref{ACadjoint}, we have that $\phi_I$ is automatically $\omega_I$-adjointable:

\begin{lem}\label{dilationsiffadjointable}
Let $\phi\in\UCP(\A)$. Then the $\omega_I$-adjointability of each $\phi_I$ is guaranteed and equivalent to dilation covariance: 
$\Ad U(\delta_I(t)) \circ \phi_I\circ \Ad U(\delta_I(t))^{-1} = \phi_I$ for every $\delta_I(t) \in\PSL(2,\RR)$, $t\in\RR$.
\end{lem}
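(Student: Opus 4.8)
The plan is to deduce both claims by combining the Bisognano--Wichmann theorem with the Accardi--Cecchini characterization of $\omega$-adjointability recalled in Proposition \ref{ACadjoint}. I would first prove the stated equivalence. Fix $I\in\cI$ and apply Proposition \ref{ACadjoint} with $\M = \A(I)$, $\phi = \phi_I$, $\omega = \omega_I$, and modular data $\Delta_I$, $J_I$ relative to $\Omega$. The proposition gives that $\phi_I$ is $\omega_I$-adjointable if and only if $\phi_I \circ \Ad \Delta_I^{it} = \Ad \Delta_I^{it} \circ \phi_I$ for every $t\in\RR$. By the Bisognano--Wichmann theorem, $\Delta_I^{it} = U(\delta_I(2\pi t))$, so that $\Ad \Delta_I^{it} = \Ad U(\delta_I(2\pi t))$; since $t$ ranges over all of $\RR$, this family of automorphisms is identical to $\{\Ad U(\delta_I(t)) : t\in\RR\}$. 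Rewriting the commutation relation in the equivalent conjugated form $\Ad U(\delta_I(t)) \circ \phi_I \circ \Ad U(\delta_I(t))^{-1} = \phi_I$ then reproduces verbatim the stated dilation covariance, establishing the equivalence.

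It remains to show that dilation covariance, and hence by the equivalence just proven $\omega_I$-adjointability, holds automatically for every $\phi\in\UCP(\A)$. This I would read off from the conformal symmetry condition (iii) in Definition \ref{UCPA}. The point is that, by definition, the dilations $\delta_I(t)$ are exactly those elements of $\PSL(2,\RR)$ that preserve the interval $I$, \ie $\delta_I(t) I = I$ for every $t\in\RR$. Specializing condition (iii) to $g = \delta_I(t)$ therefore yields
\begin{align}
\Ad U(\delta_I(t)) \circ \phi_I \circ \Ad U(\delta_I(t))^{-1} = \phi_{\delta_I(t) I} = \phi_I,
\end{align}
which is precisely dilation covariance.

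I do not expect any genuine obstacle: the lemma is essentially a bookkeeping step that identifies the modular flow of $(\A(I),\omega_I)$ with the dilation subgroup via Bisognano--Wichmann and then invokes Accardi--Cecchini. The only points requiring mild care are matching the factor $2\pi$ in the Bisognano--Wichmann reparametrization against the parametrization of the dilations (harmless, since both sweep out the full real line), and observing that the covariance built into Definition \ref{UCPA} upgrades to genuine invariance exactly when the M\"obius element fixes $I$, which is the defining feature of the dilation subgroup.
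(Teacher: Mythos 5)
Your proof is correct and follows exactly the route the paper intends: the paper states this lemma without a written proof, as an immediate combination of the Bisognano--Wichmann identification $\Delta_I^{it} = U(\delta_I(2\pi t))$ with the Accardi--Cecchini criterion of Proposition \ref{ACadjoint}, plus the observation that condition (iii) of Definition \ref{UCPA} applied to $g = \delta_I(t)$ (which satisfies $\delta_I(t)I = I$) yields dilation covariance automatically. Your write-up fills in precisely these steps, including the harmless $2\pi$ reparametrization, so there is nothing to correct.
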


The operator $V_{\phi_I^\sharp} : \Hil \to \Hil$ implementing $\phi_I^\sharp$ as in \eqref{Vphi} is the Hilbert space adjoint $V_{\phi_I}^*$.\\ 
The collection of maps $\phi^\sharp := \{\phi_I^\sharp: I\in\cI\}$ defines an element of $\UCP(\A)$, as one can check using the definition of $\omega_I$-adjoint, or the properties of the implementing operator $V_{\phi^\sharp} = V_\phi^*$.

\begin{lem}\label{multiplicativeUCP}
Let $\phi\in\UCP(\A)$. If $\phi_I$ is multiplicative, \ie, $\phi_I(xy) = \phi_I(x)\phi_I(y)$ for every $x,y\in\A(I)$, then $\phi_I$ is an automorphism and $V_\phi$ is a unitary such that $V_\phi\Omega = \Omega$ and $\phi_I = \Ad {V_\phi}$ for every $I\in\cI$. Moreover, $\phi^\sharp = \phi^{-1}$.
\end{lem}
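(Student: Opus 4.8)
The plan is to reduce everything to showing that the implementing operator $V_\phi$ is a unitary fixing $\Omega$ with $\phi_I=\Ad V_\phi$; the assertions that $\phi_I$ is an automorphism and that $\phi^\sharp=\phi^{-1}$ then follow at once. First I would note that a positive multiplicative map is automatically $*$-preserving, so each $\phi_I$ is a normal unital $*$-endomorphism of $\A(I)$, injective because it is faithful. Multiplicativity combined with the vacuum preserving property (ii) gives, for $x\in\A(I)$,
\[
\|V_\phi x\Omega\|^2=\omega_I\big(\phi_I(x)^*\phi_I(x)\big)=\omega_I\big(\phi_I(x^*x)\big)=\omega_I(x^*x)=\|x\Omega\|^2 ,
\]
so $V_\phi$ is an isometry. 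Likewise $V_\phi xy\Omega=\phi_I(xy)\Omega=\phi_I(x)V_\phi y\Omega$, and by the Reeh--Schlieder density of $\A(I)\Omega$ this yields the intertwining relation $V_\phi x=\phi_I(x)V_\phi$ for all $x\in\A(I)$. Hence, once $V_\phi$ is known to be unitary, $\phi_I=\Ad V_\phi$; and since $\PSL(2,\RR)$ acts transitively on $\cI$, conformal covariance (iii) transports multiplicativity of $\phi_I$ to every $\phi_J$, so the conclusion holds for all intervals at once.

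Next I would bring in the $\omega_I$-adjoint $\phi^\sharp\in\UCP(\A)$, whose implementing operator is $V_\phi^*$. Putting $\phi_I(x)$ in the first slot of the defining relation \eqref{equationACadjoint} and using multiplicativity and (ii) gives, for every $y\in\A(I)$, $\omega_I\big(\phi_I^\sharp(\phi_I(x))\,y\big)=\omega_I\big(\phi_I(x)\phi_I(y)\big)=\omega_I\big(\phi_I(xy)\big)=\omega_I(xy)$; faithfulness of $\omega_I$ then forces $\phi_I^\sharp\circ\phi_I=\id$. On the Hilbert space this reads $V_\phi^*V_\phi=1$, consistent with the isometry property, and it exhibits $\phi^\sharp$ as a left inverse of $\phi$. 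The relation $\phi^\sharp=\phi^{-1}$ will drop out as soon as $V_\phi$ is shown to be a unitary.

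The core of the argument, and the step I expect to be the main obstacle, is to prove that the isometry $V_\phi$ is surjective, equivalently that $\phi_I$ is onto. This cannot come from general von Neumann algebra theory: a tensor-shift endomorphism preserves a product state, commutes with its modular flow, and is implemented by a proper (non-surjective) isometry, so the conformal structure must be used essentially. My plan is to set $\N_I:=\phi_I(\A(I))$ and verify that $\N=\{\N_I\}$ is an irreducible conformal subnet containing $\Vir_\A$ (isotony and covariance are transported from $\A$ through $\phi$, and $\Vir_\A\subset\N$ since $\phi$ fixes $\Vir_\A$ pointwise). Then $\phi\colon\A\to\N$ is a vacuum preserving net isomorphism implemented by the isometry $V_\phi\colon\Hil\to P\Hil$, where $P:=V_\phi V_\phi^*$ is the Jones projection of $\N\subset\A$. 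By Lemma \ref{phiinUCPAisDiffcovar}, $V_\phi$ commutes with $U(\Diff_+(S^1))$, hence with the conformal Hamiltonian $L_0$, and it is the identity on the Virasoro vacuum subspace. I would then conclude $P=1$ by positivity of energy: since $V_\phi$ commutes with $L_0$ it maps each eigenspace $\Hil_n$ of $L_0$ isometrically into $\Hil_n\cap P\Hil$, and comparing the (finite) dimensions forces $\Hil_n\subset P\Hil$ for every $n$, whence $P\Hil=\Hil$. The delicate point — where I would need to argue carefully — is precisely the finite dimensionality of the eigenspaces of $L_0$ (equivalently, uniqueness of the vacuum together with a trace-class conformal Hamiltonian), which is the conformal input that rules out the shift-type counterexamples above.

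Once $V_\phi$ is unitary and, as already recorded, $V_\phi\Omega=\Omega$, the relation $V_\phi x=\phi_I(x)V_\phi$ shows that $\phi_I=\Ad V_\phi$ is a genuine automorphism of $\A(I)$ for every $I\in\cI$. Finally $V_{\phi^\sharp}=V_\phi^*=V_\phi^{-1}$ together with $\phi^\sharp\circ\phi=\id$ yields $\phi^\sharp=\phi^{-1}$, completing the proof.
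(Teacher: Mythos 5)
The first half of your argument is exactly the paper's own proof: multiplicativity makes $V_\phi$ an isometry, the intertwining relation $V_\phi x=\phi_I(x)V_\phi$ follows from Reeh--Schlieder, M\"obius covariance transports multiplicativity to all intervals, and the computation with \eqref{equationACadjoint} gives $\phi_I^\sharp\circ\phi_I=\id_{\A(I)}$. You are also right to insist that the remaining point, surjectivity of $\phi_I$ (equivalently $\phi_I\circ\phi_I^\sharp=\id_{\A(I)}$, equivalently unitarity of $V_\phi$), cannot come from von Neumann algebra theory alone; your shift example is exactly the standard obstruction. For comparison, the paper disposes of this point with the words ``Similarly, $\phi_I(\phi_I^\sharp(y))=y$'', but the mirrored computation $\omega_I\big(x\,\phi_I(\phi_I^\sharp(y))\big)=\omega_I\big(\phi_I^\sharp(x)\,\phi_I^\sharp(y)\big)$ can be completed only if $\phi_I^\sharp$ is multiplicative, which for an $\omega_I$-preserving unital completely positive map is equivalent to $V_\phi^*$ being an isometry, i.e.\ to the very surjectivity in question; so the subtlety you flag is real, and it is worth noting that in the paper's only application of this lemma (Proposition \ref{AutinQuOp}) the map $\phi$ is invertible in $\UCP(\A)$ by hypothesis, so that surjectivity is automatic there.

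The genuine gap is your final step. Finite dimensionality of the eigenspaces of the conformal Hamiltonian $L_0$ is not among the axioms of Definition \ref{confnet} and is not known to follow from them: it is an additional, trace-class-type regularity hypothesis. What your dimension count actually needs is that each multiplicity space $M_h$ in the isotypic decomposition $\Hil\cong\bigoplus_h \Hil_{(c,h)}\otimes M_h$ of $U$ into irreducible positive energy representations be finite dimensional: since $V_\phi$ commutes with $\{U(\gamma):\gamma\in\Diff_+(S^1)\}$, it has the block form $\bigoplus_h 1\otimes V_h$ with each $V_h$ an isometry of $M_h$, and on an infinite-dimensional $M_h$ an isometry need not be unitary, so nothing forces $P=1$ on those blocks. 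The paper works precisely in a generality where no such finiteness is available: as it recalls (citing \cite{Fr1993}, \cite{Re1994}, \cite{Ca2003}), the inclusion $\Vir_\A\subset\A$ is in general neither of finite index nor discrete, and no bound on the Virasoro multiplicities is assumed. Hence, as written, your proposal proves the lemma only for conformal nets satisfying an extra assumption foreign to the statement; closing the gap would require a route to $P=1$ (equivalently, to cyclicity of $\Omega$ for the image subnet $\N(J)=\phi_J(\A(J))$) that does not count dimensions.
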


\begin{proof}
Recall from Lemma \ref{Vphiglobal} that $V_\phi = V_{\phi_I}$ is independent of $I\in\cI$.
If $\phi_I$ is multiplicative, then $V_{\phi_I}$ is an isometry, \ie, $V_{\phi_I}^*V_{\phi_I} = 1$, see, \eg, \cite[Lemma 2.1]{NiStZs2003}. Using that $\phi_I$ is $\omega_I$-preserving, $\omega_I$-adjointable, together with multiplicativity, we show that $\phi_I^\sharp \circ \phi_I = \phi_I \circ \phi_I^\sharp = \id_{\A(I)}$.  
By \eqref{equationACadjoint}
\begin{align}
\omega_I(x \phi_I^\sharp(\phi_I(y))) = \omega_I(\phi_I(x) \phi_I(y)) = \omega_I(\phi_I(xy)) = \omega_I(xy), \quad x,y\in\A(I),
\end{align}
hence $\phi_I^\sharp(\phi_I(y)) = y$. Similarly, $\phi_I(\phi_I^\sharp(y)) = y$. Thus $\phi_I$ is invertible with $\phi_I^\sharp = \phi_I^{-1}$ and $V_{\phi_I}$ is unitary. From the multiplicativity of $\phi_I$, it also follows that $\phi_I(x) = V_{\phi_I} x V_{\phi_I}^{-1}$ for every $x\in\A(I)$ as claimed.
\end{proof}

The special case inspected above, where $\phi\in\UCP(\A)$ and each $\phi_I$ is multiplicative, corresponds to the case of automorphisms of the conformal net considered in Definition \ref{AutA}. Obviously, the set of automorphisms has a group structure given by composition and inversion.

\begin{defi}\label{compoadjointUCPA}
There is a natural notion of \textbf{composition} on $\UCP(\A)$. If $\phi_1,\phi_2\in\UCP(\A)$, then
\begin{align}
\phi_1 \circ \phi_2 := \{{\phi_1}_I \circ {\phi_2}_I : I\in\cI\}
\end{align}
belongs to $\UCP(\A)$. The composition unit is given by $\id_\A := \{\id_{\A(I)} : I\in\cI\}$.
The previously defined \textbf{$\omega$-adjunction} $\phi^\sharp = \{\phi_I^\sharp: I\in\cI\}$ is involutive, \ie, $\phi^{\sharp\sharp} = \phi$, and $(\phi_1 \circ \phi_2)^\sharp = \phi_2^\sharp \circ \phi_1^\sharp$. This endows $\UCP(\A)$ with the structure of an \emph{monoid with involution}. 
Furthermore, $\UCP(\A)$ has a natural \emph{convex structure}, and the composition and $\omega$-adjunction operations are both \emph{affine maps}, \ie, they preserve convex combinations. 
\end{defi}

\begin{rmk}
The operations in $\UCP(\A)$ considered above have a natural interpretation in terms of quantum channels, in this case acting on each local algebra $\A(I)$. The composition corresponds to the usual concatenation of channels. The $\omega$-adjoint map with respect to the faithful vacuum state $\omega_I$ coincides in this special case with the Petz recovery map \cite{Pe1984}, \cite{Pet88}, \cf \cite[Chapter 12]{WildeBook}. Moreover, as the defining equation \eqref{equationACadjoint} suggests, the $\omega$-adjunction operation provides a quantum version of Bayes' theorem, studied in \cite{PaRu22}, \cite{GPRR23} in the case of finite-dimensional \Cstar-algebras and not necessarily faithful states.
\end{rmk}

We now come to the problem of extending a unital completely positive map on a single local algebra to the whole net:

\begin{lem}\label{localbijection}
Let $I\in\cI$ be fixed. Let $\phi_I : \A(I) \to \A(I)$ be a unital completely positive map\footnote{With abuse of notation, here we do not necessarily mean that the map $\phi_I$ is associated with $\phi\in\UCP(\A)$.} 
which is $\omega_I$-preserving, $\omega_I$-adjointable and $\Vir_\A(I)$-fixing. Then there is a collection of unital completely positive maps $\{\phi_J : J\in\cI\}$, $\phi_J:\A(J)\rightarrow\A(J)$, fulfilling (ii) and (iii) in Definition \ref{UCPA} and such that the map on $I$ is the initially prescribed map $\phi_I$.
\end{lem}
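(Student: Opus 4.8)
The plan is to construct the family $\{\phi_J\}$ by conformally transporting the single map $\phi_I$ around the circle; the only genuine subtlety is that the transporting group element is not unique, and it is precisely there that the $\omega_I$-adjointability hypothesis is needed.

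First I would record two facts about the M\"obius action on $\cI$. Since $\PSL(2,\RR)$ acts transitively on $\cI$ (it already acts transitively on ordered pairs of distinct points of $S^1$, hence on oriented arcs), for every $J\in\cI$ there exists $g\in\PSL(2,\RR)$ with $gI=J$. Moreover the setwise stabilizer of $I$ is exactly the dilation subgroup $\{\delta_I(t):t\in\RR\}$: an orientation preserving M\"obius map sending $I$ to itself must fix both endpoints of $I$, and the maps fixing two given points of $S^1$ form precisely the one-parameter group $\delta_I(\RR)$. I would then define, for each $J\in\cI$,
\begin{align}
\phi_J := \Ad U(g)\circ \phi_I \circ \Ad U(g)^{-1}, \qquad gI=J.
\end{align}
As $\Ad U(g)\colon \A(I)\to\A(gI)=\A(J)$ is a normal $*$-isomorphism and $\phi_I$ is a normal unital completely positive map, $\phi_J$ is a normal unital completely positive map $\A(J)\to\A(J)$.

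The crucial step, and the one I expect to be the only real obstacle, is well-definedness. If $gI=g'I=J$, then $g^{-1}g'$ stabilizes $I$, so by the stabilizer computation $g'=g\,\delta_I(t)$ for some $t\in\RR$. Conjugating by $\Ad U(g)^{-1}$ shows that the two candidate definitions coincide if and only if $\Ad U(\delta_I(t))\circ\phi_I\circ\Ad U(\delta_I(t))^{-1}=\phi_I$, i.e. $\phi_I$ is dilation covariant. This is exactly where $\omega_I$-adjointability enters: by the Bisognano--Wichmann identity $\Delta_I^{it}=U(\delta_I(2\pi t))$ together with Proposition \ref{ACadjoint} (equivalently, by the argument of Lemma \ref{dilationsiffadjointable}, which uses only $\phi_I$ on $\A(I)$), $\omega_I$-adjointability of $\phi_I$ is equivalent to $\phi_I\circ\Ad U(\delta_I(t))=\Ad U(\delta_I(t))\circ\phi_I$ for all $t\in\RR$. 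Hence $\phi_J$ is independent of the choice of $g$.

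Finally I would check the required properties. For condition (ii), since $U(g)\Omega=\Omega$ for $g\in\PSL(2,\RR)$ and $\phi_I$ is vacuum preserving, $\omega_J\circ\phi_J=\omega_J$ follows directly from the definition. For condition (iii), given $h\in\PSL(2,\RR)$ and $J=gI$, the fact that $\Ad U$ is a genuine action of $\PSL(2,\RR)$ (phases are irrelevant under $\Ad$) gives $\Ad U(h)\circ\phi_J\circ\Ad U(h)^{-1}=\Ad U(hg)\circ\phi_I\circ\Ad U(hg)^{-1}=\phi_{hJ}$, since $(hg)I=hJ$. The $\Vir_\A(J)$-fixing condition follows because $\Ad U(g)$ maps $\Vir_\A(I)$ onto $\Vir_\A(gI)=\Vir_\A(J)$ by covariance of the Virasoro subnet, and $\phi_I$ fixes $\Vir_\A(I)$ pointwise: for $y=U(g)xU(g)^{-1}\in\Vir_\A(J)$ with $x\in\Vir_\A(I)$ one has $\phi_J(y)=U(g)\phi_I(x)U(g)^{-1}=U(g)xU(g)^{-1}=y$. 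Taking $g$ to be the identity for $J=I$ recovers the prescribed map $\phi_I$, which completes the construction.
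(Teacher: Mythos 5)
Your proposal is correct and follows essentially the same route as the paper's proof: define $\phi_J=\Ad U(g)\circ\phi_I\circ\Ad U(g)^{-1}$ for $gI=J$, use the fact that the stabilizer of $I$ in $\PSL(2,\RR)$ is the dilation subgroup together with Lemma \ref{dilationsiffadjointable} (whose argument is purely local, as you rightly note) to get well-definedness from $\omega_I$-adjointability, and then verify (ii), (iii) and the $\Vir_\A(J)$-fixing property by direct conjugation. The only difference is that you spell out the stabilizer computation and the Virasoro-fixing check, which the paper leaves implicit.
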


\begin{proof}
We have to show that $\phi_{g I} := \Ad U(g) \circ \phi_I \circ \Ad U(g)^{-1}$ for an arbitrary $g\in\PSL(2,\RR)$ chosen such that $gI = J$ for a fixed $J\in\cI$ is unambiguously defined and that the resulting collection of maps varying $J$ fulfills (ii) and (iii). First, $\phi_{gI}$ does not depend on the choice of $g$, because $\phi_I$ is $\omega_I$-adjointable and the set of transformations in $\PSL(2,\RR)$ that map $I$ onto itself coincides with the dilations of $I$, and applying Lemma \ref{dilationsiffadjointable}. Denote $\phi_{gI}$ by $\phi_J$. In particular, the map on $I$ is $\phi_I$. Each $\phi_J$ is clearly vacuum preserving and $\Vir_\A(J)$-fixing for every $J\in\cI$. We have to show that $\Ad U(h) \circ \phi_J\circ \Ad U(h)^{-1}=\phi_{hJ}$ for every $h\in\PSL(2,\RR)$, $J\in\cI$. Let $g\in\PSL(2,\RR)$ be such that $gI=J$. Then $\Ad U(h) \circ \phi_J\circ \Ad U(h)^{-1}=\Ad U(hg)  \circ \phi_I \circ \Ad U(hg)^{-1}=\phi_{hg I}=\phi_{h J}$.
\end{proof}

Recall from Definition \ref{UCPA} that the topology on $\UCP(\A)$ is the coarsest topology such that the localization maps $\ell_I : \phi\mapsto \phi_I$ are continuous for every $I\in\cI$, where the set of unital completely positive maps $\A(I)\rightarrow\A(I)$, for fixed $I$, is equipped with the pointwise ultraweak operator topology and denoted by $\UCP(\A(I))$.

\begin{thm}\label{UCPcompact}
$\UCP(\A)$ is a compact Hausdorff convex set.
\end{thm}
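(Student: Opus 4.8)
The plan is to establish the two properties — Hausdorffness, convexity, and compactness — largely by reducing to the corresponding properties of each fiber $\UCP(\A(I))$, the set of $\omega_I$-preserving unital completely positive maps $\A(I)\to\A(I)$ equipped with the pointwise ultraweak (BW) topology. The topology on $\UCP(\A)$ is the initial topology induced by the localization maps $\ell_I$, so $\UCP(\A)$ embeds into the product space $\prod_{I\in\cI} \UCP(\A(I))$ via $\phi \mapsto (\phi_I)_{I}$. The convex structure and the Hausdorff property then pass to the subspace essentially for free, while compactness requires showing that the image is a \emph{closed} subset of a compact product.

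\textbf{First I would handle the fiberwise compactness.} For fixed $I\in\cI$, the local algebra $\A(I)$ is a factor on the separable Hilbert space $\Hil$. I claim that the set of $\omega_I$-preserving unital completely positive maps $\A(I)\to\A(I)$ is compact Hausdorff in the BW topology. The cleanest route uses the implementing operators: by the discussion following Definition \ref{UCPA}, each such $\phi_I$ is encoded by the contraction $V_{\phi_I}\in\B(\Hil)$ with $V_{\phi_I}\Omega=\Omega$ and $\|V_{\phi_I}\|=1$, defined by $V_{\phi_I}x\Omega=\phi_I(x)\Omega$. The BW topology on the maps corresponds to the weak operator topology on the $V_{\phi_I}$; since the unit ball of $\B(\Hil)$ is WOT-compact (separable $\Hil$ gives metrizability on bounded sets), one gets compactness of the space of implementing contractions, and one must check that the defining conditions (unitality, complete positivity, $\omega_I$-invariance, and the constraint $V_{\phi_I}\Omega=\Omega$) cut out a \emph{closed} set. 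Alternatively, one views $\UCP(\A(I))$ directly as a weak-$*$ closed subset of completely positive maps and invokes a standard Arzel\`a--Ascoli/Banach--Alaoglu argument; either way this fiber-level statement is essentially classical and I would cite \cite{NiStZs2003}, \cite{BiDeGi2021}.

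\textbf{The main obstacle} is showing that $\UCP(\A)\subset\prod_I\UCP(\A(I))$ is closed, since compactness of the whole space then follows from Tychonoff. Concretely, I take a net $\phi^{(\lambda)}\to\psi$ in the product, with each $\phi^{(\lambda)}\in\UCP(\A)$, and must verify that the limit family $\psi=(\psi_I)_I$ again satisfies the compatibility (i) and conformal symmetry (iii) of Definition \ref{UCPA}; vacuum preservation (ii) and $\Vir_\A(I)$-fixing are pointwise-closed conditions and pass to the limit immediately. For compatibility, if $I_1\subset I_2$ and $x\in\A(I_1)$, then $\phi^{(\lambda)}_{I_2}(x)=\phi^{(\lambda)}_{I_1}(x)$ for all $\lambda$; taking ultraweak limits on each side (which converge by continuity of $\ell_{I_1}$ and $\ell_{I_2}$) gives $\psi_{I_2}(x)=\psi_{I_1}(x)$. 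For conformal symmetry, the relation $\Ad U(g)\circ\phi^{(\lambda)}_I\circ\Ad U(g)^{-1}=\phi^{(\lambda)}_{gI}$ must survive the limit; here I would use that $\Ad U(g)$ is ultraweakly continuous (being spatially implemented by a unitary), so that evaluating both sides against a fixed $x\in\A(I)$ and passing to the limit preserves the identity. The delicate point to be careful about is the interaction between the net limit and the uncountable family of conditions indexed by $g\in\PSL(2,\RR)$ and $I\in\cI$: since each condition is a closed condition when tested against a single element, closedness holds as an intersection of closed sets, and no uniformity in $g$ is needed.

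\textbf{Finally} I would assemble the pieces: Hausdorffness of $\UCP(\A)$ follows because it carries the initial topology from the Hausdorff fibers $\UCP(\A(I))$ and the family $\{\ell_I\}$ separates points (two elements agreeing on every $\A(I)$ are equal by definition); convexity follows because convex combinations are computed fiberwise, $(\,t\phi+(1-t)\psi\,)_I=t\phi_I+(1-t)\psi_I$, and each fiber condition in Definition \ref{UCPA} is preserved under convex combinations (unitality, positivity, vacuum preservation and the linear constraints are all convex). Combining the closed embedding into the compact product $\prod_I\UCP(\A(I))$ with Tychonoff yields compactness, completing the proof that $\UCP(\A)$ is a compact Hausdorff convex set.
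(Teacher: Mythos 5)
Your proof is correct, but it follows a genuinely different route from the paper's. You embed $\UCP(\A)$ into the full product $\prod_{I\in\cI}\UCP(\A(I))$, invoke Tychonoff, and verify that each defining condition of Definition \ref{UCPA} survives pointwise ultraweak limits; since compatibility, vacuum preservation, M\"obius covariance and $\Vir_\A(I)$-fixing are each closed when tested against a single element (and a single pair $(g,I)$, with no uniformity needed), the image is an intersection of closed sets in a compact Hausdorff product, and compactness follows. The paper never forms a product: it introduces the auxiliary space $\UCP(\A)^\wedge$ of families satisfying only (ii) and (iii), and uses Lemma \ref{localbijection} to show that a \emph{single} localization map $\ell_I$ is a bijection from $\UCP(\A)^\wedge$ onto the set of $\omega_I$-preserving, $\omega_I$-adjointable, $\Vir_\A(I)$-fixing maps on the one algebra $\A(I)$; it then proves that this single fiber is compact (Paulsen's theorem plus closedness of the conditions, where closedness of $\omega_I$-adjointability requires the dilation-covariance characterization of Lemma \ref{dilationsiffadjointable}), that $\ell_I$ is a homeomorphism, and finally that $\UCP(\A)$ is closed in $\UCP(\A)^\wedge$. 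Your route is more elementary: it bypasses the extension Lemma \ref{localbijection} and the adjointability machinery entirely, since $\omega_I$-adjointability is not part of Definition \ref{UCPA} and your closedness checks never need it. What the paper's route buys is the stronger conclusion that $\UCP(\A)$ is homeomorphic to a closed set of maps on a single local algebra --- the ``one interval determines everything'' principle that is reused crucially in Theorem \ref{discretecase} and in the hypergroup results of Section \ref{discrete}.

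One caveat on your fiberwise compactness step: the parenthetical ``cleanest route'' via implementing operators is not right as stated. The bijection $\phi_I\mapsto V_{\phi_I}$ is continuous from the BW topology to the weak operator topology, but continuity of the inverse is not automatic: WOT convergence of $V_{\phi_I^{(\lambda)}}$ only controls matrix elements of the form $(y\Omega,\phi_I^{(\lambda)}(x)\Omega)$ with $x,y\in\A(I)$, whereas pointwise ultraweak convergence of the maps requires convergence against all normal functionals; moreover, it is not evident that the set of contractions implementing \emph{some} unital completely positive map is WOT-closed without already invoking BW-compactness of the set of maps, so as a proof of compactness this route is circular. Since you also offer the standard Banach--Alaoglu/Paulsen argument \cite[Theorem 7.4]{Pa2002}, \cite{NiStZs2003}, \cite{BiDeGi2021} --- which is exactly what the paper does and is correct --- your overall proof stands; just discard the implementing-operator variant.
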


\begin{proof}
Let $\UCP(\A)^\wedge$ the set of collections of unital completely positive maps $\{\phi_I : I \in\cI\}$ fulfilling (ii) and (iii), but not necessarily the compatibility condition (i). Let $I\in\cI$ be fixed. By Lemma \ref{localbijection}, it follows that the localization map $\ell_I:\phi \in \UCP(\A)^\wedge\mapsto \phi_I$ is a bijection onto the set of $\omega_I$-preserving, $\omega_I$-adjointable and $\Vir_\A(I)$-fixing maps in $\UCP(\A(I))$. The latter set is compact and Hausdorff with the pointwise ultraweak operator topology, \cf \cite[Lemma 4.33]{BiDeGi2021}. 
Indeed, by \cite[Theorem 7.4]{Pa2002}, the set of all bounded linear operators from $\A(I)$ to $\B(\Hil)$ with norm at most 1 is compact by the Banach--Alaoglu theorem. Again by \cite[Theorem 7.4]{Pa2002}, the completely positive condition is closed in the pointwise ultraweak operator topology. Similarly for $\phi_I(1) = 1$ and $\phi_I(\A(I)) \subset \A(I)$. Also the conditions of $\omega_I$-preserving, $\omega_I$-adjointable and $\Vir_\A(I)$-fixing are closed. Explicitly, for the $\omega_I$-adjointability, let $({\phi_I})_\alpha$ be a net in $\UCP(\A(I))$ that converges to $\phi_I\in\UCP(\A(I))$. If every $({\phi_I})_\alpha$ is $\omega_I$-adjointable, or equivalently $\Ad U(\delta_I(t)) \circ ({\phi_I})_\alpha \circ \Ad U(\delta_I(t))^{-1} = ({\phi_I})_\alpha$ for every $t\in\RR$ by Lemma \ref{dilationsiffadjointable}, then in the limit $\Ad U(\delta_I(t)) \circ {\phi_I} \circ \Ad U(\delta_I(t))^{-1} = {\phi_I}$ for every $t\in\RR$, and $\phi_I$ is $\omega_I$-adjointable by the same lemma.

We now show that $\UCP(\A)^\wedge$ is also compact and Hausdorff when endowed with the same topology as for $\UCP(\A)$. Namely, the topology on $\UCP(\A)^\wedge$ is the smallest topology containing open sets of the type $\ell_J^{-1}(S)$, for some $J\in\cI$ and some open set $S$ of $\UCP(\A(J))$. The localization map $\ell_I$ is by definition continuous. We want to show that $\ell_I$ is a homeomorphism, \ie, an open map. Note that any open set of the type $\ell_J^{-1}(S)$, can be written as $\ell_I^{-1}(T)$ for an open set $T$ of $\UCP(\A(I))$. This follows as we can take $T := \ell_I(\ell_J^{-1}(S)) = \{(\ell_J^{-1}(\phi_J))_I: \phi_J\in S\}$. The set $T$ is open because, by the proof of Lemma \ref{localbijection}, $T = \{\Ad U(h) \circ \phi_J\circ \Ad U(h)^{-1}: \phi_J\in S\}$ for some $h\in\PSL(2,\RR)$ such that $hJ = I$.

To conclude the proof, we show that $\UCP(\A)$ is closed in $\UCP(\A)^\wedge$. Let $\phi_\alpha\in \UCP(\A)$ be a net with limit $\phi\in \UCP(\A)^\wedge$. By continuity of the localization maps, each $(\phi_\alpha)_I$ converges to $\phi_I$ for every $I\in\cI$. Let $I\subset J$ and $x\in\A(I)$, then $\phi_J(x)=\lim_\alpha(\phi_\alpha)_J(x)=\lim_\alpha(\phi_\alpha)_I(x)=\phi_I(x)$. Namely, the compatibility condition (i) holds and $\phi\in \UCP(\A)$.
\end{proof}

\begin{rmk}\label{AutUCP}
It is known that the set of automorphisms $\Aut(\A)$ (Definition \ref{AutA}) is compact with the strong operator topology on the implementing unitaries. This follows from the \emph{split property} which is a consequence of diffeomorphism covariance \cite[Theorem 5.4]{MoTaWe2018} together with \cite[Section 3]{DoLo1984}. The analogous statement for $\Aut(\A)$ with the induced pointwise ultraweak operator topology does not follow directly from the compactness of $\UCP(\A)$. For example, the pointwise ultraweak limit of automorphisms on a single von Neumann algebra need not be an automorphism. \Cf \cite[Section II]{DoHaRo1969I} for a proof of compactness of $\Aut(\A)$ in the context of 3+1 dimensional (graded) local and Poincar\'e covariant QFTs.
\end{rmk}

We shall be interested in unital completely positive maps on $\A$ that are \emph{extreme} in the sense of convex sets: $\phi = \lambda_1 \phi_1 + \lambda_2 \phi_2$ with $0< \lambda_1, \lambda_2 < 1$ and $\phi_1, \phi_2\in\UCP(\A)$, implies $\phi_1=\phi_2=\phi$.

\begin{defi}\label{QuOpA}
We define the \textbf{quantum operations} on the conformal net $(\A,U,\Omega)$ to be $\QuOp(\A) := \Extr(\UCP(\A))$, the set of \emph{extreme points} of $\UCP(\A)$.
\end{defi}

The following proposition says that quantum operations generalize automorphisms:

\begin{prop}\label{AutinQuOp}
$\Aut(\A)$ is contained in $\QuOp(\A)$. Furthermore, if $\phi\in\QuOp(\A)$, or more generally $\phi\in\UCP(\A)$, is invertible in $\UCP(\A)$, then  $\phi\in \Aut(\A)$ and $\phi^{\sharp}= \phi^{-1}$. 
In symbols, $\UCP(\A)^\times = \QuOp(\A)^\times = \Aut(\A)$.
\end{prop}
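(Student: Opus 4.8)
The plan is to treat the two inclusions separately and then assemble the symbolic identities. For $\Aut(\A)\subset\QuOp(\A)$ I would show directly that an automorphism is an extreme point of the convex set $\UCP(\A)$. Let $\alpha\in\Aut(\A)$; by Lemma \ref{multiplicativeUCP} (or directly from Definition \ref{AutA}) its implementing operator $V_\alpha$ is a unitary with $V_\alpha\Omega=\Omega$ and $\alpha_I=\Ad V_\alpha$. The key observation is that the assignment $\phi\mapsto V_\phi$ is \emph{affine}: from $V_{\phi_I}x\Omega=\phi_I(x)\Omega$ and density of the vectors $x\Omega$ one reads off $V_{\lambda_1\phi_1+\lambda_2\phi_2}=\lambda_1 V_{\phi_1}+\lambda_2 V_{\phi_2}$. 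Thus, if $\alpha=\lambda_1\phi_1+\lambda_2\phi_2$ is a nontrivial convex decomposition inside $\UCP(\A)$, then $V_\alpha=\lambda_1 V_{\phi_1}+\lambda_2 V_{\phi_2}$ with $\|V_{\phi_1}\|,\|V_{\phi_2}\|\le 1$. Since a unitary is an extreme point of the closed unit ball of $\B(\Hil)$, this forces $V_{\phi_1}=V_{\phi_2}=V_\alpha$; and because $\Omega$ is separating for $\A(I)$, the implementing operator determines the map, so $\phi_1=\phi_2=\alpha$. Hence $\alpha\in\Extr(\UCP(\A))=\QuOp(\A)$.

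For the second assertion, let $\phi\in\UCP(\A)$ be invertible with inverse $\psi:=\phi^{-1}\in\UCP(\A)$. Invertibility in the monoid $\UCP(\A)$ means $\psi_I\circ\phi_I=\phi_I\circ\psi_I=\id_{\A(I)}$ for every $I$, so each $\phi_I$ is a bijection of $\A(I)$ whose inverse $\psi_I$ is again unital completely positive. The heart of the argument is then a purely order-theoretic saturation of the Kadison--Schwarz inequality: for $x\in\A(I)$, Kadison--Schwarz for $\phi_I$ gives $\phi_I(x)^*\phi_I(x)\le\phi_I(x^*x)$, while Kadison--Schwarz for $\psi_I$ applied to $y=\phi_I(x)$ gives $x^*x\le\psi_I\bigl(\phi_I(x)^*\phi_I(x)\bigr)$; applying the positive (hence order-preserving) map $\phi_I$ to the latter and using $\phi_I\circ\psi_I=\id$ yields $\phi_I(x^*x)\le\phi_I(x)^*\phi_I(x)$. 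Combining the two inequalities forces the equality $\phi_I(x^*x)=\phi_I(x)^*\phi_I(x)$ for all $x$, and replacing $x$ by $x^*$ (using that positive maps are $\ast$-preserving) gives the companion identity $\phi_I(xx^*)=\phi_I(x)\phi_I(x)^*$. By Choi's multiplicative domain theorem the multiplicative domain of $\phi_I$ is therefore all of $\A(I)$, so $\phi_I$ is multiplicative. Lemma \ref{multiplicativeUCP} then immediately upgrades this to $\phi\in\Aut(\A)$, with $V_\phi$ unitary and $\phi^\sharp=\phi^{-1}$.

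Finally I would assemble the symbolic identities. The first part shows $\Aut(\A)\subset\QuOp(\A)$, and since automorphisms form a group under composition they are in particular invertible, so $\Aut(\A)\subset\QuOp(\A)^\times$ and $\Aut(\A)\subset\UCP(\A)^\times$. The second part shows $\UCP(\A)^\times\subset\Aut(\A)$, and since $\QuOp(\A)\subset\UCP(\A)$ we also get $\QuOp(\A)^\times\subset\UCP(\A)^\times\subset\Aut(\A)$. Chaining these inclusions gives $\UCP(\A)^\times=\QuOp(\A)^\times=\Aut(\A)$.

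I expect the main obstacle to be the saturation step in the second part, specifically recognizing that complete (indeed merely $2$-)positivity of \emph{both} $\phi_I$ and its inverse is exactly what is required: one cannot conclude multiplicativity from invertibility of a merely positive unital map, and the clean sandwiching of the two Kadison--Schwarz inequalities is what converts two-sided invertibility into saturation. A secondary point to handle carefully is that multiplicativity on a single local algebra already suffices to invoke Lemma \ref{multiplicativeUCP}, since that lemma propagates the conclusion to every $I\in\cI$ via the $I$-independence of $V_\phi$ established in Lemma \ref{Vphiglobal}; I would accordingly state the convex-decomposition and separating-vector arguments for a fixed $I$ and then globalize.
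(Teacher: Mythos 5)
Your proposal is correct, and it diverges from the paper's proof in exactly one place. For the second assertion your argument is essentially the paper's: the paper likewise plays the Kadison--Schwarz inequality for $\phi_I$ against that for $\phi_I^{-1}$, feeds the resulting saturation into Choi's multiplicative domain theorem \cite{Ch1974}, and concludes via Lemma \ref{multiplicativeUCP}; the only difference is that the paper tests saturation at the elements $\phi_I^{-1}(u)$ with $u$ unitary and then uses surjectivity of $\phi_I^{-1}$ plus the fact that unitaries span $\A(I)$, whereas you obtain saturation at every $x\in\A(I)$ directly, a marginally more streamlined version of the same computation. For the first assertion, however, the paper does not argue at all: it cites Arveson's extremality theorem \cite[Theorem 1.4.6]{Ar1969} (via the proof of \cite[Corollary 4.50]{BiDeGi2021}), which gives that automorphisms are extreme among \emph{all} unital completely positive maps, and this restricts to the convex subset $\UCP(\A)$. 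Your replacement is a genuinely different, self-contained argument: affinity of $\phi\mapsto V_\phi$ (valid by density of $\A(I)\Omega$ and boundedness), extremality of unitaries in the closed unit ball of $\B(\Hil)$, and the separating property of $\Omega$ to recover maps from their implementers. Note that your route crucially uses vacuum-preservation of the competitors $\phi_1,\phi_2$ (this is what gives $\|V_{\phi_i}\|\le 1$), so it proves extremality \emph{within} $\UCP(\A)$ --- exactly what Definition \ref{QuOpA} requires --- whereas Arveson's theorem yields the a priori stronger extremality among arbitrary unital completely positive maps on each $\A(I)$; for the proposition the two are interchangeable. Your closing remarks are also sound: two-sided $2$-positivity is indeed the essential input in the saturation step, and the propagation from a single interval to the whole net through the $I$-independence of $V_\phi$ (Lemma \ref{Vphiglobal}) is consistent with how Lemma \ref{multiplicativeUCP} is stated and used in the paper, though in both your argument and the paper's this point is moot, since multiplicativity is established uniformly for every $I\in\cI$.
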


\begin{proof}
Automorphisms are extreme among unital completely positive maps by \cite[Theorem 1.4.6]{Ar1969}, \cf the proof of \cite[Corollary 4.50]{BiDeGi2021}. This proves the first statement. For the second statement, let $\phi^{-1}\in \UCP(\A)$ such that $\phi\circ\phi^{-1}=\phi^{-1}\circ\phi=\id_\A$ and let $I\in\cI$. For every unitary $u\in\A(I)$, by the Kadison--Schwarz inequality, we have:
\begin{align}
1 = \phi_I(\phi_I^{-1}(u^\ast u)) \geq \phi_I(\phi_I^{-1}(u^\ast)\phi_I^{-1}(u)) \geq \phi_I(\phi_I^{-1}(u^\ast))\phi_I(\phi_I^{-1}(u))=1
\end{align}
thus by $\phi_I^{-1}(\A(I))) = \A(I)$ and Choi's multiplicative domain theorem \cite{Ch1974}, $\phi_I$ is multiplicative. 
By Lemma \ref{multiplicativeUCP}, we have that $\phi\in\Aut(\A)$ and $\phi^\sharp = \phi^{-1}$ as desired.
\end{proof}

We shall later motivate our choice of extreme points in the set of unital completely positive maps as a definition of quantum operations (Definition \ref{QuOpA}), see at the end of Section \ref{SecRelQuOp}. The remainder of this section is dedicated to the study of fixed point subnets.

\begin{prop}\label{subnetsfromaction}
Let $S\subset\UCP(\A)$ be a subset, then the fixed point net $\A^S$ defined by setting $\A^S(I) := \{x\in\A(I) : \phi_I(x) = x \text{ for every } \phi\in S\}$, $I\in\cI$, is an irreducible conformal subnet of $\A$.
\end{prop}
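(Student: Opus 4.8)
The plan is to reduce the statement to two essentially independent tasks: (a) showing that for each fixed $I\in\cI$ the set $\A^S(I)$ is a von Neumann subalgebra of $\A(I)$, and (b) verifying the defining axioms of a conformal subnet (Definition \ref{confsubnetdef}) together with irreducibility. Task (b) will be a routine consequence of the defining properties of $\UCP(\A)$ in Definition \ref{UCPA}, so the conceptual content lies entirely in task (a), and specifically in the fact that the fixed-point set of a unital completely positive map need not be closed under multiplication without further hypotheses. The extra input we shall exploit is the presence of the faithful invariant vacuum state.

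For a single $\phi\in S$ and fixed $I$, write $\psi:=\phi_I$ and $\omega:=\omega_I$; recall that $\psi$ is normal, unital, completely positive and $\omega$-preserving, and that $\omega$ is faithful by Reeh--Schlieder. The fixed-point set $\A(I)^\psi := \{x\in\A(I) : \psi(x)=x\}$ is automatically a $*$-closed (since $\psi$ is positive, hence $*$-preserving), ultraweakly closed (since $\psi-\id$ is ultraweakly continuous), unital linear subspace; the only nontrivial point is closure under multiplication. First I would show that every fixed point lies in Choi's multiplicative domain. Given $\psi(x)=x$, the Kadison--Schwarz inequality \cite{Ka1952} gives $\psi(x^*x)-\psi(x)^*\psi(x) = \psi(x^*x)-x^*x \geq 0$; applying the $\psi$-invariant faithful state $\omega$ yields $\omega(\psi(x^*x)-x^*x) = \omega(x^*x)-\omega(x^*x) = 0$, so by faithfulness $\psi(x^*x)=x^*x=\psi(x)^*\psi(x)$. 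Running the same argument on $x^*$ (also a fixed point) gives $\psi(xx^*)=\psi(x)\psi(x)^*$, so $x$ lies in the multiplicative domain of $\psi$. Choi's multiplicative domain theorem \cite{Ch1974} then gives $\psi(xy)=\psi(x)\psi(y)=xy$ for any two fixed points $x,y$, whence $\A(I)^\psi$ is closed under products and is therefore a von Neumann subalgebra. Since $\A^S(I)=\bigcap_{\phi\in S}\A(I)^{\phi_I}$ is an intersection of von Neumann subalgebras, it is again a von Neumann algebra.

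It then remains to verify the axioms of Definition \ref{confsubnetdef} and irreducibility. The inclusion $\A^S(I)\subset\A(I)$ holds by construction. Isotony follows from the compatibility condition (i) of Definition \ref{UCPA}: for $I_1\subset I_2$ and $x\in\A^S(I_1)$ one has $\phi_{I_2}(x)=\phi_{I_1}(x)=x$ for all $\phi\in S$, so $x\in\A^S(I_2)$. M\"obius covariance follows from the conformal symmetry condition (iii): for $g\in\PSL(2,\RR)$ and $x\in\A^S(I)$ we have $\phi_{gI}(U(g)xU(g)^{-1})=U(g)\phi_I(x)U(g)^{-1}=U(g)xU(g)^{-1}$ for all $\phi\in S$, giving $U(g)\A^S(I)U(g)^{-1}\subset\A^S(gI)$, and the reverse inclusion by applying this to $g^{-1}$ and $gI$. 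Finally, nontriviality and irreducibility come for free: condition (iii) of Definition \ref{UCPA} requires $\phi_I\restriction_{\Vir_\A(I)}=\id$ for every $\phi\in\UCP(\A)$, hence $\Vir_\A(I)\subset\A^S(I)$ for every $I\in\cI$, so $\Vir_\A\subset\A^S\subset\A$; by Proposition \ref{irrediffintermediate} this containment is equivalent to the irreducibility of $\A^S\subset\A$, and in particular $\A^S(I)\supset\Vir_\A(I)\neq\CC 1$ is nontrivial.

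The main obstacle is the single-map fixed-point-algebra step, and its resolution is exactly the observation that the faithful invariant vacuum state forces every fixed point into the multiplicative domain; without invariance or faithfulness the fixed-point set of a completely positive map would generally fail to be multiplicatively closed. Everything else is bookkeeping with the net axioms, and the irreducibility is built into the $\Vir_\A$-fixing requirement already imposed on $\UCP(\A)$.
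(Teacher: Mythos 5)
Your proof is correct and follows essentially the same route as the paper's: fixed points of each $\phi_I$ form a von Neumann subalgebra, intersections preserve this, isotony comes from the compatibility condition, M\"obius covariance from conformal covariance of the maps, and irreducibility from $\Vir_\A(I)\subset\A^S(I)$ together with Proposition \ref{irrediffintermediate}. The only difference is that where the paper simply cites \cite[Theorem 2.3]{AGG02} for the fixed-point-algebra step, you prove it directly via the Kadison--Schwarz inequality, invariance and faithfulness of $\omega_I$, and Choi's multiplicative domain theorem---which is precisely the standard argument underlying that citation, so the two proofs coincide in substance.
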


\begin{proof}
As each $\phi_I$ is $\omega_I$-preserving and $\omega_I$ is faithful, $\A^{\{\phi\}}(I)\subset \A(I)$ is a von Neumann subalgebra for every $\phi\in S$, see, \eg, \cite[Theorem 2.3]{AGG02}. Thus 
\begin{align}
\A^S(I)&=\bigcap_{\phi\in S}\A^{\{\phi\}}(I)\subset \A(I)
\end{align}
is a von Neumann subalgebra. The irreducibility of the inclusion follows from $\Vir_\A(I)\subset \A^S(I)$ as in Proposition \ref{irrediffintermediate}. Isotony of the net $\A^S$ follows from the compatibility condition on the maps $\phi\in S$. M{\"o}bius covariance of the subnet follows from the M{\"o}bius covariance of the maps, while full conformal covariance follows from \cite[Proposition 3.7 (b)]{Ca2004}.
\end{proof}

The following is our first main result:

\begin{thm}\label{fixedpointsVir}
Let $(\A,U,\Omega)$ be a conformal net. Then $\A^{\QuOp(\A)} = \Vir_\A$.
\end{thm}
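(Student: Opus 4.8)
The plan is to prove the two inclusions $\Vir_\A \subset \A^{\QuOp(\A)}$ and $\A^{\QuOp(\A)} \subset \Vir_\A$ separately, fixing an arbitrary interval $I\in\cI$ throughout. The first is immediate: condition (iii) in Definition \ref{UCPA} forces every $\phi\in\UCP(\A)$, hence in particular every $\phi\in\QuOp(\A)\subset\UCP(\A)$, to satisfy $\phi_I\restriction_{\Vir_\A(I)} = \id$. Thus every element of $\Vir_\A(I)$ is fixed by all quantum operations, so $\Vir_\A(I)\subset\A^{\QuOp(\A)}(I)$.

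For the reverse inclusion the idea is to exploit that the standard conditional expectation $E_{\Vir}$ onto $\Vir_\A$ already lies in $\UCP(\A)$ (as recorded just after Lemma \ref{Vphiglobal}), even though it is typically \emph{not} an extreme point, and to transport the fixed-point property from the extreme points to $E_{\Vir}$ by Krein--Milman. Concretely, suppose $x\in\A(I)$ is fixed by every quantum operation, i.e.\ $\phi_I(x) = x$ for all $\phi\in\QuOp(\A)=\Extr(\UCP(\A))$. Consider the evaluation map $f : \UCP(\A) \to \A(I)$ given by $f(\psi) := \psi_I(x)$, where $\A(I)$ carries the ultraweak topology. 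By the convex structure on $\UCP(\A)$ the localization $\psi\mapsto\psi_I$ is affine and evaluation at $x$ is linear, so $f$ is affine; moreover $f$ is continuous, since $\ell_I$ is continuous into $\UCP(\A(I))$ with the pointwise ultraweak topology and evaluation at $x$ is then ultraweakly continuous. By hypothesis $f$ is identically equal to $x$ on $\Extr(\UCP(\A))$.

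Now, by Theorem \ref{UCPcompact} the set $\UCP(\A)$ is compact and convex inside the ambient pointwise-ultraweak topology, which is locally convex and Hausdorff (it embeds into a product of spaces of linear maps with topologies of pointwise weak-$*$ convergence), so Krein--Milman gives $\UCP(\A) = \overline{\Conv}(\Extr(\UCP(\A)))$. Affineness of $f$ yields that $f$ equals $x$ on $\Conv(\Extr(\UCP(\A)))$; since the ultraweak topology on $\A(I)$ is Hausdorff the singleton $\{x\}$ is closed, so $\{\psi : f(\psi)=x\}$ is closed by continuity of $f$, and hence $f$ equals $x$ on the closure, i.e.\ on all of $\UCP(\A)$. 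In particular $f(E_{\Vir}) = x$, that is $(E_{\Vir})_I(x) = x$. But $(E_{\Vir})_I$ is the conditional expectation of $\A(I)$ onto $\Vir_\A(I)$, so $x = (E_{\Vir})_I(x)\in\Vir_\A(I)$. This gives $\A^{\QuOp(\A)}(I)\subset\Vir_\A(I)$ and, together with the first inclusion, the claimed equality.

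The only genuinely delicate point is the passage from extreme points to $E_{\Vir}$: one must check that the ambient topology is locally convex and Hausdorff so that Krein--Milman applies and singletons are closed, and that the evaluation map $f$ is affine and continuous — all of which follow from Theorem \ref{UCPcompact} and the definition of the topology on $\UCP(\A)$. I do not expect a serious obstacle here; the real content is simply the observation that fixing every extreme point of $\UCP(\A)$ automatically forces fixing the (non-extreme) canonical expectation $E_{\Vir}$, whose range is exactly $\Vir_\A$.
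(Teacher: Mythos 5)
Your proposal is correct and follows essentially the same route as the paper: the paper's proof also applies Krein--Milman to the compact convex set $\UCP(\A)$ (Theorem \ref{UCPcompact}) to place $E_{\Vir}$ in the closed convex hull of $\QuOp(\A)$, and then transports the fixed-point property of $x$ through convex combinations and pointwise ultraweak limits to conclude $(E_{\Vir})_I(x)=x$, hence $x\in\Vir_\A(I)$. Your explicit formulation via the affine continuous evaluation map $\psi\mapsto\psi_I(x)$, and your separate treatment of the easy inclusion $\Vir_\A\subset\A^{\QuOp(\A)}$, merely spell out details the paper leaves implicit.
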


\begin{proof}
By Theorem \ref{UCPcompact}, $\UCP(\A)$ is a compact Hausdorff convex set, thus, by the Krein--Milman theorem, the standard conditional expectation $E_{\text{Vir}}$ of $\A$ onto $\Vir_\A$ belongs to the pointwise ultraweak closure of the convex span of $\QuOp(\A)$. Let $x\in \A^{\QuOp(\A)}(I)$, $I\in\cI$, \ie, $x\in\A(I)$ and $\phi_I(x)=x$ for every $\phi\in \QuOp(\A)$. Hence $x$ is fixed by arbitrary convex combinations of elements in $\QuOp(\A)$ and their pointwise ultraweak limits. This in turn implies that $(E_{\text{Vir}})_I(x)=x$, \ie, $x\in\Vir_\A(I)$.
\end{proof}

Theorem \ref{fixedpointsVir} above says that $\QuOp(\A)$ is an (in a sense minimal) extension of $\Aut(\A)$ that recovers the canonical minimal subnet $\Vir_\A$ from $\A$ as its fixed point subnet (or \emph{generalized orbifold} in the terminology of \cite{Bi2016}). Note that among the (finite index) irreducible extensions of $\Vir_c$, $c<1$, classified in \cite[Theorem 4.1]{KaLo2004}, there are examples of conformal nets (with index $3+\sqrt{3}$ over $\Vir_c$, \cf \cite[Theorem 2.3]{CaKaLo2010} for the possible small index values of arbitrary $\B\subset\A$) where $\Aut(\A)$ is the trivial group, hence $\A^{\Aut(\A)} = \A$. Nevertheless, $\Vir_\A \subset \A$ is non-trivial, \ie, $\A\neq \Vir_\A$.

\begin{rmk}\label{beyonddiscrete}
At this level of generality, we cannot say much neither about the induced topological structure nor about the algebraic structure of $\QuOp(\A)$, the set of all quantum operations on $\A$. Indeed, the extreme points of a compact convex set need not be closed, nor Borel, in general, see, \eg, \cite{Phe01}. Moreover, the composition of two quantum operations belongs of course to $\UCP(\A)$, but it need not be extreme. We shall come back to these two points in the special cases of finite index or irreducible discrete conformal inclusions in Section \ref{discrete}.
\end{rmk}

\section{Relative quantum operations}\label{SecRelQuOp}

In the previous section, we considered an arbitrary conformal net $(\A,U,\Omega)$ and the canonical conformal subnet $\Vir_\A\subset\A$ constructed from the diffeomorphism symmetries of $\A$. In this section, we consider more generally intermediate conformal nets $\Vir_\A \subset\B\subset\A$, or equivalently, by Proposition \ref{irrediffintermediate}, an arbitrary irreducible conformal inclusion $\B\subset\A$. 

\begin{defi}\label{relQuOp}
Let $\B\subset \A$ be an irreducible conformal inclusion. We define $\QuOp(\A|\B)$ to be the set of \textbf{quantum operations} on $\A$ \textbf{relative} to $\B$:
\begin{align}
\QuOp(\A|\B) := \left\{\phi\in\QuOp(\A) : \phi_I(x)=x \text{ for every } x\in\B(I),\, I\in\cI \right\} \! .
\end{align}
Namely, the set of extreme points of $\UCP(\A)$, as in Definition \ref{QuOpA}, that in addition fix $\B$ pointwise. 
Let also $\UCP(\A|\B) := \left\{\phi\in\UCP(\A) : \phi_I(x)=x \text{ for every } x\in\B(I),\, I\in\cI\right\}$.
\end{defi}

By definition, $\UCP(\A|\Vir_\A) = \UCP(\A)$ and $\QuOp(\A|\Vir_\A) = \QuOp(\A)$.

\begin{lem}\label{extremisnice}
$\QuOp(\A|\B)$ coincides with the set of extreme points of $\UCP(\A|\B)$.
\end{lem}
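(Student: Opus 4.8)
The plan is to show that the two convex sets $\UCP(\A|\B)$ and $\UCP(\A)$ are compatible in the sense that being extreme in the smaller set $\UCP(\A|\B)$ is equivalent to being extreme in the ambient set $\UCP(\A)$, once one restricts attention to points lying in $\UCP(\A|\B)$. Since $\QuOp(\A|\B) = \QuOp(\A) \cap \UCP(\A|\B)$ by Definition \ref{relQuOp}, and $\QuOp(\A) = \Extr(\UCP(\A))$ by Definition \ref{QuOpA}, one direction is essentially free: if $\phi\in\QuOp(\A|\B)$, then $\phi$ is extreme in all of $\UCP(\A)$, and since $\UCP(\A|\B)\subset\UCP(\A)$ is a convex subset containing $\phi$, any decomposition $\phi = \lambda_1\phi_1 + \lambda_2\phi_2$ with $\phi_1,\phi_2\in\UCP(\A|\B)$ is in particular a decomposition inside $\UCP(\A)$, forcing $\phi_1 = \phi_2 = \phi$. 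Hence $\phi$ is extreme in $\UCP(\A|\B)$.

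The substantive direction is the converse: I would take $\phi\in\UCP(\A|\B)$ that is extreme \emph{within} $\UCP(\A|\B)$ and show it is in fact extreme within the larger set $\UCP(\A)$. The key point is that $\UCP(\A|\B)$ is a \emph{face} of $\UCP(\A)$, i.e.\ it is convex and if an element of $\UCP(\A|\B)$ is written as a proper convex combination $\phi = \lambda_1\phi_1 + \lambda_2\phi_2$ with $\phi_1,\phi_2\in\UCP(\A)$ and $0<\lambda_1,\lambda_2<1$, then automatically $\phi_1,\phi_2\in\UCP(\A|\B)$. Granting the face property, extremality within the face coincides with extremality within the ambient set: if $\phi$ is extreme in $\UCP(\A|\B)$ and $\phi = \lambda_1\phi_1 + \lambda_2\phi_2$ with $\phi_i\in\UCP(\A)$, the face property pulls $\phi_1,\phi_2$ back into $\UCP(\A|\B)$, and then extremality in $\UCP(\A|\B)$ gives $\phi_1 = \phi_2 = \phi$.

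The crux is therefore proving the face property, which I expect to be the main obstacle, though it reduces to a pointwise positivity argument. Fix $I\in\cI$ and $x\in\B(I)$; since $\phi_I(x) = x$ and $x$ is in the multiplicative domain of each $(\phi_j)_I$ (being fixed and self-adjoint in $\B(I)$, using the Kadison--Schwarz inequality in the style of Lemma \ref{multiplicativeUCP} and Choi's multiplicative domain theorem \cite{Ch1974}), one wants to conclude $(\phi_j)_I(x) = x$ for $j=1,2$. Concretely, apply the convex decomposition to $x^*x$ and to $x$ and use the Kadison--Schwarz inequality $(\phi_j)_I(x^*x) \geq (\phi_j)_I(x)^*(\phi_j)_I(x)$ for each $j$. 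Since $\lambda_1 (\phi_1)_I(x) + \lambda_2(\phi_2)_I(x) = \phi_I(x) = x$ and similarly for $x^*x$, a standard strict-convexity argument (the quadratic form $y\mapsto y^*y$ is operator convex, and equality in $\lambda_1 a_1 + \lambda_2 a_2$ forces the Kadison--Schwarz defects to vanish) shows each $(\phi_j)_I(x) = x$. I would phrase this via the defect operators $(\phi_j)_I(x^*x) - (\phi_j)_I(x)^*(\phi_j)_I(x) \geq 0$, take the convex combination, and observe that the left-hand side telescopes to a nonpositive operator that must therefore vanish, forcing $x$ into the multiplicative domain of each $(\phi_j)_I$ and pinning down $(\phi_j)_I(x) = x$. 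This establishes that $\phi_1,\phi_2\in\UCP(\A|\B)$, completing the face property and hence the lemma.
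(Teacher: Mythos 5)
Your proof is correct, and its skeleton coincides with the paper's: the containment $\QuOp(\A|\B)\subset\Extr(\UCP(\A|\B))$ is immediate in both, and the substance in both is the face property --- any proper convex decomposition in $\UCP(\A)$ of a $\B$-fixing map has $\B$-fixing components. You differ only in how the face property is proved. The paper composes the decomposition with the inclusion $\iota_I:\B(I)\to\A(I)$, obtaining $\iota_I=\lambda_1(\phi_1)_I\circ\iota_I+\lambda_2(\phi_2)_I\circ\iota_I$, and then cites Arveson \cite[Theorem 1.4.6]{Ar1969}: a *-homomorphism such as $\iota_I$ is extreme among completely positive maps $\B(I)\to\A(I)$, hence $(\phi_i)_I\circ\iota_I=\iota_I$. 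You instead prove exactly that extremality instance by hand: with $a_j:=(\phi_j)_I(x)$ for $x\in\B(I)$,
\begin{align}
x^*x \,=\, \sum_{j}\lambda_j(\phi_j)_I(x^*x) \,\geq\, \sum_{j}\lambda_j\, a_j^*a_j \,\geq\, \Big(\sum_{j}\lambda_j a_j\Big)^{\!*}\Big(\sum_{j}\lambda_j a_j\Big) \,=\, x^*x,
\end{align}
by Kadison--Schwarz and then operator convexity of $y\mapsto y^*y$; equality throughout forces the convexity defect $\lambda_1\lambda_2(a_1-a_2)^*(a_1-a_2)$ to vanish, so $a_1=a_2=x$. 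Since this computation is essentially the standard proof of Arveson's extremality statement for *-homomorphisms, the two routes carry the same mathematical content: yours buys self-containedness, the paper's buys brevity by citation. Two small clarifications to your write-up: the parenthetical appeal to self-adjointness and to Choi's multiplicative domain theorem \cite{Ch1974} is unnecessary (and restricting to self-adjoint $x$ would not suffice --- fortunately Kadison--Schwarz holds for arbitrary $x$ here because the maps are completely, in particular $2$-, positive, so your defect computation covers the general case verbatim); and note that vanishing of the Kadison--Schwarz defects alone only places $x$ in the multiplicative domains of the $(\phi_j)_I$ --- it is the equality case of operator convexity, i.e.\ $a_1=a_2$, that actually pins down $(\phi_j)_I(x)=x$, which your telescoping identity does deliver but which deserves to be stated explicitly.
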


\begin{proof}
One inclusion is trivial, namely $\QuOp(\A|\B)$ is clearly contained and extreme in $\UCP(\A|\B)$.
Vice versa, let $\phi$ be extreme in $\UCP(\A|\B)$ and assume $\phi = \lambda_1 \phi_1 + \lambda_2 \phi_2$ with $0< \lambda_1, \lambda_2 < 1$, $\lambda_1 + \lambda_2 = 1$, and $\phi_1, \phi_2\in\UCP(\A)$. Denoted by $\iota_I$ the inclusion map of $\B(I)$ into $\A(I)$, we have $\iota_I = \phi_I\circ \iota_I =\lambda_1 (\phi_1)_I\circ \iota_I + \lambda_2 (\phi_2)_I\circ\iota_I$. By \cite[Theorem 1.4.6]{Ar1969}, \cf the proof of \cite[Lemma 4.49]{BiDeGi2021}, $\iota_I$ is extreme in the convex set of completely positive maps $\B(I)\rightarrow \A(I)$. Thus $\iota_I =(\phi_i)_I \circ \iota_I$, $i=1,2$, and therefore $\phi_1, \phi_2 \in \UCP(\A|\B)$. But since $\phi$ is extreme in $\UCP(\A|\B)$ by assumption, we get $\phi=\phi_1=\phi_2$. Thus $\phi \in \QuOp(\A|\B)$.
\end{proof}

The relative versions of Proposition \ref{AutinQuOp} and Theorem \ref{fixedpointsVir} hold:

\begin{prop}
Let $\Aut(\A|\B) := \left\{\alpha\in\Aut(\A) : \alpha(x)=x \text{ for every } x\in\B(I),\, I\in\cI\right\}$. Then $\Aut(\A|\B)\subset\QuOp(\A|\B)$. 
Furthermore, $\UCP(\A|\B)^\times = \QuOp(\A|\B)^\times = \Aut(\A|\B)$.
\end{prop}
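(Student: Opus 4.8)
The plan is to prove this proposition as the \emph{relative} analogue of Proposition \ref{AutinQuOp} and Theorem \ref{fixedpointsVir}, so the overall strategy is to reduce each of the three assertions to a statement already established in the non-relative case, adding only the observation that everything respects the extra constraint of fixing $\B$ pointwise.

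First I would handle the inclusion $\Aut(\A|\B)\subset\QuOp(\A|\B)$. By Proposition \ref{AutinQuOp}, every $\alpha\in\Aut(\A)$ lies in $\QuOp(\A)=\Extr(\UCP(\A))$. If moreover $\alpha$ fixes $\B$ pointwise, then $\alpha\in\UCP(\A|\B)$, and by Lemma \ref{extremisnice} the extreme points of $\UCP(\A|\B)$ are exactly $\QuOp(\A|\B)$; since $\alpha$ is already extreme in the larger set $\UCP(\A)$, it is a fortiori extreme in the convex subset $\UCP(\A|\B)$ containing it. Hence $\alpha\in\QuOp(\A|\B)$. For closedness, I would note that $\Aut(\A|\B)=\Aut(\A)\cap\QuOp(\A|\B)$, and that $\Aut(\A)$ is closed (Remark \ref{AutUCP}, via the split property) while the condition $\phi_I(x)=x$ for all $x\in\B(I)$ defines a pointwise-ultraweakly closed subset by continuity of the localization maps $\ell_I$ and of evaluation at each fixed $x$; intersecting closed sets gives a closed set. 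One small point to verify is that the topology used for closedness of $\Aut(\A)$ in Remark \ref{AutUCP} is the strong operator topology on implementing unitaries, whereas here the ambient topology is the pointwise ultraweak one, so I would either argue directly that $\Aut(\A|\B)$ is closed in $\QuOp(\A|\B)$ with its induced topology, or remark that the $\B$-fixing condition is manifestly closed and that extremality passes to limits within the already-closed $\Aut(\A)$.

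Next I would prove $\UCP(\A|\B)^\times=\QuOp(\A|\B)^\times=\Aut(\A|\B)$, which is the relative version of the second part of Proposition \ref{AutinQuOp}. If $\phi\in\UCP(\A|\B)$ is invertible in $\UCP(\A|\B)$, then in particular it is invertible in the larger monoid $\UCP(\A)$, so by Proposition \ref{AutinQuOp} we get $\phi\in\Aut(\A)$ and $\phi^\sharp=\phi^{-1}$. Since $\phi$ fixes $\B$ pointwise by assumption, $\phi\in\Aut(\A|\B)$. Conversely, every $\alpha\in\Aut(\A|\B)$ is invertible with inverse $\alpha^{-1}=\alpha^\sharp$ (by Lemma \ref{multiplicativeUCP}); one must only check that $\alpha^{-1}$ again fixes $\B$ pointwise, which is immediate since $\alpha(x)=x$ for $x\in\B(I)$ forces $\alpha^{-1}(x)=x$. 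Thus $\alpha^{-1}\in\UCP(\A|\B)$, so $\alpha\in\UCP(\A|\B)^\times$. Combining with the chain of inclusions $\Aut(\A|\B)\subset\QuOp(\A|\B)^\times\subset\UCP(\A|\B)^\times$ (the middle set being those quantum operations relative to $\B$ that are invertible) closes the loop and yields all three equalities.

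I expect no serious obstacle here: the proposition is essentially bookkeeping on top of the already-proven non-relative statements, and the only genuine content is checking that invertibility and extremality interact correctly with the $\B$-fixing constraint. The one place demanding slight care is the closedness claim, where the subtlety noted in Remark \ref{AutUCP}—that a pointwise ultraweak limit of automorphisms of a single von Neumann algebra need not be an automorphism—means I should derive closedness of $\Aut(\A|\B)$ by intersecting the closed set $\Aut(\A)$ (closed by the split-property argument) with the closed $\B$-fixing condition, rather than attempting to show limits stay in $\Aut(\A|\B)$ from scratch.
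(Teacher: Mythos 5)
Your handling of the two algebraic assertions is correct and is exactly the paper's (implicit) approach: the paper's entire proof reads ``Immediate from the proof of Proposition \ref{AutinQuOp}'', i.e., one re-runs the Arveson extremality argument and the Kadison--Schwarz/Choi/Lemma \ref{multiplicativeUCP} argument inside $\UCP(\A|\B)$, checking only that the $\B$-fixing constraint survives inversion and $\omega$-adjunction. Your reductions (extremality in $\UCP(\A)$ implies extremality in the convex subset $\UCP(\A|\B)$, invertibility in the submonoid implies invertibility in $\UCP(\A)$, and $\alpha^{-1}$ fixes $\B$ whenever $\alpha$ does) accomplish precisely this.

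The gap is in the closedness claim. You rest it on ``$\Aut(\A)$ is closed (Remark \ref{AutUCP}, via the split property)'' intersected with the closed $\B$-fixing set, but Remark \ref{AutUCP} does not give closedness of $\Aut(\A)$ in the ambient topology of this proposition: it gives \emph{compactness of $\Aut(\A)$ in the strong operator topology on the implementing unitaries}, and it explicitly cautions that the analogous statement in the pointwise ultraweak topology does not follow directly, since pointwise ultraweak limits of automorphisms need not be automorphisms. You flag this mismatch yourself, but neither of your proposed remedies closes it: ``argue directly'' is left unexecuted, and ``extremality passes to limits within the already-closed $\Aut(\A)$'' both presupposes the very closedness at issue and invokes a false principle (limits of extreme points need not be extreme, cf.\ Remark \ref{beyonddiscrete}). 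The missing bridge between the two topologies is the following elementary but essential observation: if $V_\lambda \to V$ strongly with all $V_\lambda$ and $V$ unitary, then also $V_\lambda^* \to V^*$ strongly (because $\|V_\lambda^*\xi - V^*\xi\| = \|\xi - V_\lambda V^*\xi\| \to 0$), hence $\Ad V_\lambda (x) \to \Ad V (x)$ strongly, so ultraweakly, for every $x$. Therefore the map $V \mapsto \Ad V$, from the set of canonical vacuum-fixing implementing unitaries with the strong operator topology onto $\Aut(\A)$ with the pointwise ultraweak topology, is continuous; since the former set is compact by Remark \ref{AutUCP}, $\Aut(\A)$ is compact, hence closed, in the Hausdorff space $\UCP(\A)$ of Theorem \ref{UCPcompact}. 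Only with this step in place does your intersection argument, namely $\Aut(\A|\B) = \Aut(\A) \cap \{\phi : \phi_I(x)=x \text{ for all } x\in\B(I),\, I\in\cI\}$ with the second set manifestly pointwise ultraweakly closed, yield closedness of $\Aut(\A|\B)$ in $\UCP(\A|\B)$ and a fortiori in $\QuOp(\A|\B)$.
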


\begin{proof}
Immediate from the proof of Proposition \ref{AutinQuOp}.
\end{proof}

\begin{thm}\label{fixedpointsB}
Let $\B\subset\A$ be an irreducible conformal inclusion. Then $\B=\A^{\QuOp(\A|\B)}$.
\end{thm}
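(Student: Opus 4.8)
The plan is to run the argument of Theorem \ref{fixedpointsVir} almost verbatim, replacing $\Vir_\A$ by $\B$ and $\UCP(\A)$ by $\UCP(\A|\B)$. The inclusion $\B\subset\A^{\QuOp(\A|\B)}$ is immediate from the definitions: every $\phi\in\QuOp(\A|\B)$ fixes $\B(I)$ pointwise, so $\B(I)\subset\A^{\{\phi\}}(I)$ for each such $\phi$, and intersecting over all of $\QuOp(\A|\B)$ yields $\B(I)\subset\A^{\QuOp(\A|\B)}(I)$ for every $I\in\cI$. The content lies in the reverse inclusion.

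First I would note that $\UCP(\A|\B)$ is a compact convex set: it is convex, and it is closed in the compact Hausdorff set $\UCP(\A)$ of Theorem \ref{UCPcompact}, being cut out by the conditions $\phi_I(x)=x$ for $x\in\B(I)$, $I\in\cI$. Each such condition is closed, since $\phi\mapsto\phi_I(x)$ is continuous in the pointwise ultraweak topology and points are closed in a Hausdorff space. By Lemma \ref{extremisnice}, the set of extreme points of $\UCP(\A|\B)$ is exactly $\QuOp(\A|\B)$.

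The key input is that the standard conditional expectation $E_\B$ of $\A$ onto $\B$ belongs to $\UCP(\A|\B)$. By Remark \ref{stdcondexpandJonesproj} the family $E_\B=\{E_I:I\in\cI\}$ is compatible and vacuum preserving, and each $E_I$ fixes $\B(I)$ pointwise, being a conditional expectation onto $\B(I)$; in particular, since $\Vir_\A\subset\B$ by Proposition \ref{irrediffintermediate}, it satisfies the $\Vir_\A(I)$-fixing requirement of condition (iii) in Definition \ref{UCPA}. For the M\"obius covariance in (iii), I would use that the Jones projection $e_\B$ is the orthogonal projection onto $\Hil_\B$, which is $U$-invariant because $U(g)\B(I)\Omega=\B(gI)\Omega\subset\Hil_\B$ for $g\in\PSL(2,\RR)$; hence $U(g)$ commutes with $e_\B$, and $\Ad U(g)\circ E_I\circ\Ad U(g)^{-1}=E_{gI}$ follows from the implementing relation $e_\B x e_\B=E_I(x)e_\B$. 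This shows $E_\B\in\UCP(\A)$, and together with the $\B$-fixing property, $E_\B\in\UCP(\A|\B)$.

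Finally I would apply the Krein--Milman theorem: $E_\B$ lies in the pointwise ultraweak closure of the convex hull of the extreme points $\QuOp(\A|\B)$. If $x\in\A^{\QuOp(\A|\B)}(I)$, that is $\phi_I(x)=x$ for all $\phi\in\QuOp(\A|\B)$, then the same equality is preserved under convex combinations and under pointwise ultraweak limits (again by continuity of $\phi\mapsto\phi_I(x)$), so $(E_\B)_I(x)=E_I(x)=x$ and therefore $x\in\B(I)$. This gives $\A^{\QuOp(\A|\B)}\subset\B$ and completes the proof. The only step that genuinely needs checking beyond the absolute case of Theorem \ref{fixedpointsVir} is the membership $E_\B\in\UCP(\A|\B)$, and specifically its M\"obius covariance; this is routine given the $U$-invariance of the Jones projection, so I do not expect a real obstacle, the statement being a direct relativization of Theorem \ref{fixedpointsVir}.
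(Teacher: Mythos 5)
Your proof is correct and follows essentially the same route as the paper: establish that $\UCP(\A|\B)$ is convex and closed in the compact Hausdorff set $\UCP(\A)$ of Theorem \ref{UCPcompact}, identify its extreme points with $\QuOp(\A|\B)$ via Lemma \ref{extremisnice}, and rerun the Krein--Milman argument of Theorem \ref{fixedpointsVir} with $E_\B$ in place of $E_{\Vir}$. The only difference is that you explicitly verify the membership $E_\B\in\UCP(\A|\B)$ (in particular its M\"obius covariance, via the $U$-invariance of the Jones projection), a point the paper uses but leaves implicit.
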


\begin{proof}
The set $\UCP(\A|\B)$ is convex and pointwise ultraweakly closed in $\UCP(\A)$, hence compact and Hausdorff with the induced topology by Theorem \ref{UCPcompact}. The rest follows as in Theorem \ref{fixedpointsVir}.
\end{proof}

As for the examples with $\B=\Vir_\A$ from \cite{KaLo2004} mentioned in the previous section, there are examples of non-trivial irreducible conformal inclusions $\B\subset\A$ with $\Aut(\A|\B)$ the trivial group, but $\B\neq\A$. See, \eg, \cite[Example 2]{Bi2019MFO}, where $\B = L\SU(2)_{10} \subset \A = L\Spin(5)_1$ with index $3+\sqrt{3}$. Further somehow opposite or intermediate examples (with finite or infinite index) are as follows. If $\A := L\SU(2)_1$ then $\Aut(\A) = \SO(3)$ and ${\A}^{\Aut(\A)} = \Vir_{\A}$ with central charge $c=1$. Every intermediate conformal net $\Vir_{\A} \subset \B \subset \A$ corresponds to a closed subgroup $H\subset\SO(3)$ via $\B = {\A}^H$, see \cite[Section 3]{Ca2004}, \cite[Section 4]{Xu2005}. If $\A_N := L\U(1)_{2N}$, for certain integer values of $N$, namely $N \geq 2$ and not a perfect square, the intermediate conformal nets $\Vir_{\A_N} \subset\B\subset\A_N$ are either $\Vir_{\A_N}$, again with central charge $c=1$, or $\B = \A_N^H$ for a closed subgroup $H\subset\Aut(\A_N) = D_\infty$, where $D_\infty \cong \TT \rtimes \ZZ_2$ is the infinite dihedral group, and $\Vir_{\A_N} \subsetneq \A_N^{D_\infty}$. The case $N=2$ also provides examples of conformal subnets $\B\subset \A_2$ that are not irreducible, \ie, by Proposition \ref{irrediffintermediate}, not intermediate $\Vir_{\A_2} \subset\B\subset\A_2$. In fact, $\B$ is isomorphic to $\Vir_{1/2}$ in all these latter examples. See \cite[Corollary 3.9, Theorem 4.4]{CaGaHi2019}.
\smallskip

We conclude this section with a comment on (finite or compact) group fixed points. This is our motivation for considering the extreme points of $\UCP(\A)$ and $\UCP(\A|\B)$ as a definition of quantum operations (Definition \ref{QuOpA} and \ref{relQuOp}). The proof is a combination of \cite[Proposition 9.2]{BiDeGi2021} and Theorem \ref{discretecase} in the next section. 

\begin{cor}
Let $G$ be a finite or compact metrizable group acting properly on a conformal net $(\A,U,\Omega)$ as in Definition \ref{groupaction}. Then $\QuOp(\A|\A^G) = \Aut(\A|\A^G)\cong G$, where the isomorphism (as topological groups) is given by the action of $G$ on $\A$. Moreover, $\UCP(\A|\A^G) \cong P(G)$, where $P(G)$ is the convex set of (positive) probability Radon measures on $G$.
\end{cor}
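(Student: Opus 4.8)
The plan is to prove the corollary in two stages, corresponding to the two claimed identifications.

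\medskip

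\noindent\textbf{First I would establish $\QuOp(\A|\A^G) = \Aut(\A|\A^G) \cong G$.} The key input is that for a proper action of a finite or compact group $G$, the fixed point subfactor $\A^G(I) \subset \A(I)$ is irreducible and discrete (finite index in the finite group case), so that the results of Section \ref{discrete} and of \cite{BiDeGi2021} apply. By Theorem \ref{discretecase} (invoked forward, as the statement permits), every $\A^G(I)$-fixing unital completely positive map on a single $\A(I)$ extends to an element of $\UCP(\A|\A^G)$, reducing the global problem to the local one at a fixed $I\in\cI$. At the local level, \cite[Proposition 9.2]{BiDeGi2021} identifies the extreme points of the relevant convex set of $\A^G(I)$-fixing unital completely positive maps with the group $G$ acting by $\Ad V(g)$; these are exactly the automorphisms. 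Thus $\QuOp(\A|\A^G)$ consists precisely of the maps $\{\Ad V(g)\}_{g\in G}$, which lie in $\Aut(\A|\A^G)$ by Definition \ref{AutA}, and conversely $\Aut(\A|\A^G)\subset\QuOp(\A|\A^G)$ by the relative version of Proposition \ref{AutinQuOp}. Faithfulness of $V$ gives injectivity of $g\mapsto \Ad V(g)$, so the map is a group isomorphism; that it is a homeomorphism for the topologies in play follows because on $\Aut(\A)$ the strong operator topology on implementing unitaries agrees with the induced pointwise ultraweak topology (via the split property, as recalled in Remark \ref{AutUCP}), matching the metrizable topology on $G$.

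\medskip

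\noindent\textbf{Next I would identify $\UCP(\A|\A^G) \cong P(G)$.} By the first stage, the extreme points of the compact convex set $\UCP(\A|\A^G)$ are exactly $\{\Ad V(g) : g\in G\}\cong G$. Since $\UCP(\A|\A^G)$ is compact, convex and Hausdorff (Theorem \ref{UCPcompact} together with closedness of the relative set, as in Theorem \ref{fixedpointsB}), the Krein--Milman theorem shows it is the closed convex hull of its extreme points, and I expect the Choquet-type correspondence to provide an affine homeomorphism with $P(G)$: to a probability measure $\mu$ on $G$ one associates the barycentric map $\phi_I^{\mu}(x) = \int_G \Ad V(g)(x)\,\dd\mu(g)$ on each $\A(I)$, and conversely every element of $\UCP(\A|\A^G)$ arises uniquely this way. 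The uniqueness of the representing measure is where one must work: it amounts to saying $\UCP(\A|\A^G)$ is a (Choquet) simplex with extreme boundary $G$. I would deduce this from the group structure — the fact that the extreme points form a group under composition, combined with the affine structure of convolution on $P(G)$, pins down the barycentric parametrization as a bijection, again appealing to \cite[Proposition 9.2]{BiDeGi2021} at the local level where the corresponding statement for $\UCP$ maps on a single subfactor is available.

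\medskip

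\noindent\textbf{The main obstacle} will be the uniqueness of the Choquet representation, i.e.\ proving that distinct probability measures on $G$ yield distinct unital completely positive maps and that every such map is a genuine (not merely closed-convex-hull-limit) barycenter. For a general compact convex set the Krein--Milman theorem gives surjectivity of the barycenter map onto the closed convex hull but neither uniqueness of the representing measure nor that the whole set is exhausted by honest barycenters; both require the simplex property. I expect to obtain this by transporting the problem through the local bijection of Lemma \ref{localbijection} to the single subfactor $\A^G(I)\subset\A(I)$, where \cite[Proposition 9.2]{BiDeGi2021} gives the precise finite/compact hypergroup (here genuinely group) structure and the identification of the full $\UCP$ set with $P(G)$, and then checking that this identification is compatible with the conformal covariance and compatibility constraints defining $\UCP(\A|\A^G)$.
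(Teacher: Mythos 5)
Your proposal is correct and follows essentially the same route as the paper, whose entire proof is the one-line combination of Theorem \ref{discretecase} with \cite[Proposition 9.2]{BiDeGi2021}. The Choquet-uniqueness issue you flag as the ``main obstacle'' is already packaged in those two citations: the affine homeomorphism of Theorem \ref{discretecase} (not Lemma \ref{localbijection}, which lacks the compatibility condition) transports the identification $\UCP(\A(I)|\A^G(I))\cong P(G)$ and $\Extr(\UCP(\A(I)|\A^G(I)))\cong G$ of \cite[Proposition 9.2]{BiDeGi2021} directly to $\UCP(\A|\A^G)$ and $\QuOp(\A|\A^G)$, topology included, so no separate barycentric argument (nor the appeal to the split property via Remark \ref{AutUCP}) is needed.
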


\section{$\QuOp(\A|\B)$ for discrete subnets}\label{discrete}

Let $(\A,U,\Omega)$ be a conformal net and $\B\subset \A$ an irreducible and \emph{discrete} (Definition \ref{irredinclusion} and \ref{discretesubnets}) or \emph{finite index} (Definition \ref{finindexsubnets}) conformal subnet. Under these assumptions, we already recalled at the end of Section \ref{RepnConfNets} that $\B(I)\subset\A(I)$ is an irreducible discrete and (braided with respect to the DHR braiding) local type $\III$ subfactor, for every $I\in\cI$. We begin by showing (Theorem \ref{discretecase}) that every $\B(I)$-fixing unital completely positive map $\A(I)\to\A(I)$, with no additional assumption, can be extended to the whole net and defines an element of $\UCP(\A|\B)$. 

\begin{defi}
Let $I\in\cI$ be fixed. Let $\UCP(\A(I)|\B(I))$ be the set of $\B(I)$-fixing unital completely positive maps $\phi:\A(I)\to\A(I)$,
endowed with the pointwise ultraweak operator topology. 
\end{defi}

\begin{lem}[{\cite[Corollary 4.25]{BiDeGi2021}}]\label{adjauto}
Every $\phi_I \in \UCP(\A(I)|\B(I))$\footnote{As already done in Section \ref{QuOponConfNets}, Lemma \ref{localbijection}, with abuse of notation we denote by $\phi_I$ the elements of $\UCP(\A(I)|\B(I))$, not necessarily coming from elements of $\UCP(\A)$.} is automatically $\B(I)$-bimodular $\omega_I$-preserving 
and $\omega_I$-adjointable, where $\omega_I = (\Omega,\slot\Omega)$ is the vacuum state\footnote{Or any normal faithful state on $\A(I)$ which is invariant for the vacuum preserving conditional expectation $E_I:\A(I)\to \B(I)\subset\A(I)$. Recall that $E_I$ is unique among normal faithful conditional expectations by Remark \ref{irreduniqueE}.} on $\A(I)$.
\end{lem}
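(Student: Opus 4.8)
My plan is to treat the three asserted properties separately, isolating the single place where discreteness is genuinely needed. Fix $I\in\cI$ and write $\N := \B(I)$, $\M := \A(I)$; let $E := E_I : \M\to\N$ be the vacuum-preserving conditional expectation and $e := e_I$ the Jones projection of Remark \ref{stdcondexpandJonesproj}, and let $V := V_{\phi_I}$ be the bounded operator implementing $\phi_I$ as in \eqref{Vphi}, so that $\|V\|=1$ and $V\Omega=\Omega$.

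\emph{Bimodularity.} Since $\phi_I$ restricts to the identity on $\N$, every unitary $u\in\N$ satisfies $\phi_I(u^*u)=1=\phi_I(u)^*\phi_I(u)$ and likewise $\phi_I(uu^*)=\phi_I(u)\phi_I(u)^*$, so $\N$, being generated by its unitaries, lies in the multiplicative domain of $\phi_I$. Choi's multiplicative domain theorem \cite{Ch1974} then gives $\phi_I(nxm)=n\,\phi_I(x)\,m$ for all $n,m\in\N$, $x\in\M$, which is the $\N$-bimodularity. In particular $V$ commutes with the left and the right actions of $\N$ on $\Hil$, i.e. $V\in\N'\cap J_I\N'J_I$.

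\emph{Vacuum preservation.} Using that $\phi_I$ fixes $\N=E(\M)$ pointwise, for every $x\in\M$ one has $V e\,x\Omega = V E(x)\Omega = \phi_I(E(x))\Omega = E(x)\Omega = e\,x\Omega$, hence $Ve=e$ and, taking adjoints, $eV^*=e$. Therefore $eV^*\Omega = e\Omega = \Omega$, so the component of $V^*\Omega$ in the range of $e$ already has norm one; since $\|V^*\Omega\|\le\|V\|=1$ and $e$ is a projection, this forces $V^*\Omega=\Omega$. Consequently
\[
\omega_I(\phi_I(x)) = (\Omega, V x\Omega) = (V^*\Omega, x\Omega) = (\Omega, x\Omega) = \omega_I(x)
\]
for all $x\in\M$, i.e. $\phi_I$ is $\omega_I$-preserving; normality and faithfulness then follow automatically, since $\omega_I$ is normal and faithful by Reeh--Schlieder. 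I stress that neither of these two steps uses discreteness: they rest only on the existence of $E$ and on the Kadison--Schwarz inequality.

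\emph{Adjointability} is the crux, and the only place where discreteness enters. By Proposition \ref{ACadjoint}, $\phi_I$ is $\omega_I$-adjointable if and only if $V$ commutes with the modular group $\Delta_I^{it}$ (equivalently with $J_I$). From the first step $V$ lies in the algebra $\mathcal B := \N'\cap J_I\N'J_I$ of $\N$-$\N$-bimodular operators, equivalently the relative commutant of $\N$ in the Jones basic construction $\langle\M,e\rangle=J_I\N'J_I$. Discreteness of $\N\subset\M$ in the sense of Izumi--Longo--Popa (which holds by the discussion following Definition \ref{discretesubnets}) is precisely the statement that $\mathcal B$ is \emph{atomic}, a direct sum of type I factors indexed by the finite-dimensional $\N$-$\N$ sectors occurring in $\Hil$. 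The remaining point — which I expect to be the main obstacle — is to show that $\Ad\Delta_I^{it}$ acts \emph{trivially} on $\mathcal B$, so that $V$ is automatically $\Delta_I^{it}$-fixed. For this one uses that $\Delta_I^{it}$ commutes with $J_I$ and with $e$ and fixes $\Omega$, so by continuity it fixes every minimal central projection of the atomic algebra $\mathcal B$; and that, because $E$ is the \emph{unique} (hence minimal) expectation for the irreducible inclusion by Remark \ref{irreduniqueE}, the finite-dimensional intertwiners generating each summand of $\mathcal B$ lie in the centralizer of $\omega_I$ — transparently so in the model case $\N=\M^G$, where $\mathcal B\cong\CC[G]$ is pointwise fixed by the modular flow of the $G$-invariant vacuum. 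This is exactly the local discrete subfactor analysis of \cite{BiDeGi2021}; granting it, $[V,\Delta_I^{it}]=0$, so $\phi_I$ is $\omega_I$-adjointable with $V_{\phi_I^\sharp}=V^*$.
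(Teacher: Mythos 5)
Your first step is fine: Choi's multiplicative domain theorem \cite{Ch1974} does give $\B(I)$-bimodularity from the $\B(I)$-fixing property alone, and this is indeed how the cited analysis begins. The serious problem is your second step, which is circular. You introduce $V=V_{\phi_I}$ as ``the bounded operator implementing $\phi_I$, with $\|V\|=1$'' and then use $\|V^*\Omega\|\le\|V\|=1$ to force $V^*\Omega=\Omega$. But the boundedness of $V_{\phi_I}$ in \eqref{Vphi} is obtained from the Kadison--Schwarz inequality \emph{together with} the vacuum-preserving property: $\|\phi_I(x)\Omega\|^2\le\omega_I(\phi_I(x^*x))=\omega_I(x^*x)=\|x\Omega\|^2$, and the last equality is exactly what you are trying to prove. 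For an element of $\UCP(\A(I)|\B(I))$ --- which, as the footnote to the lemma stresses, is a priori only unital, completely positive and $\B(I)$-fixing, neither normal nor state-preserving --- the assignment $x\Omega\mapsto\phi_I(x)\Omega$ is densely defined but has no a priori Hilbert-norm bound (Kadison--Schwarz alone gives only $\|\phi_I(x)\Omega\|\le\|x\|$, an operator-norm estimate). So both your second and third steps, which lean on the bounded operator $V$, assume the conclusion. A correct route proves $E_I\circ\phi_I=E_I$ first and then $\omega_I\circ\phi_I=\omega_I\circ E_I\circ\phi_I=\omega_I$; and note that $E_I\circ\phi_I=E_I$ cannot be deduced from uniqueness of the expectation (Remark \ref{irreduniqueE}), because $E_I\circ\phi_I$ is not yet known to be normal. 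This is precisely where the charged-field (Pimsner--Popa) structure of the \emph{discrete} inclusion enters in \cite{BiDeGi2021}, so your claim that the first two steps need no discreteness is also unsupported: your argument for preservation, once the circularity is removed, does not exist.

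On the adjointability step you correctly locate the role of discreteness (atomicity of $\B(I)'\cap J_I\B(I)'J_I$ in the sense of \cite{IzLoPo1998}), but the decisive claim --- that $\Ad\Delta_I^{it}$ acts \emph{trivially} on this atomic relative commutant --- is asserted, not proved. Continuity fixes the minimal central projections, as you say, but within each type I summand the modular flow could still act by a nontrivial one-parameter group of inner automorphisms; your justification (``the intertwiners lie in the centralizer of $\omega_I$'') does not literally parse, since those intertwiners do not belong to $\A(I)$ and $\Omega$ is not separating for the basic construction algebra, so the statement must be phrased as commutation with $\Delta_I^{it}$ --- which is at least as strong as the adjointability assertion being proved, since every $V_\phi$ lies in that relative commutant (a membership which, by the way, you extract from bimodularity alone, whereas the paper's own Lemma \ref{compatibility} obtains $V\in J_I\B(I)'J_I$ from adjointability via Proposition \ref{ACadjoint}). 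You then write ``granting it'' and cite \cite{BiDeGi2021}. Since the paper itself proves this lemma only by quoting \cite[Corollary 4.25]{BiDeGi2021}, a blind proof has value exactly insofar as it supplies that analysis; yours defers to the reference at the one point where all the work lies, and is circular before that point.
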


The following lemma should be compared with Lemma \ref{phiinUCPAisDiffcovar}, where $\phi$ belongs to $\UCP(\A)$.

\begin{lem}\label{compatibility}
Let $\phi_I \in \UCP(\A(I)|\B(I))$ and let $V_{\phi_I}$ be as in \eqref{Vphi}. Then $U(\gamma) V_{\phi_I} U(\gamma)^{-1} = V_{\phi_I}$ for every $\gamma\in\Diff_+(S^1)$.
\end{lem}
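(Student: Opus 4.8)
The plan is to reduce the statement to a single structural assertion: that $V_{\phi_I}$ is a \emph{global self-intertwiner} of the defining representation $\iota$ of $\B$, i.e.\ that $V_{\phi_I}\in\bigl(\bigvee_{J\in\cI}\B(J)\bigr)'$. This suffices because, for $\gamma\in\Diff_+(J)$ one has $U(\gamma)\in\Vir_\A(J)$ by the very definition of the Virasoro subnet, and $\Vir_\A(J)\subset\B(J)$ since $\B\subset\A$ is irreducible (Proposition \ref{irrediffintermediate}, using that a discrete inclusion is irreducible). Hence $U(\Diff_+(S^1))''=\bigvee_{J}\Vir_\A(J)\subset\bigvee_{J}\B(J)$, so $\bigl(\bigvee_{J}\B(J)\bigr)'\subset U(\Diff_+(S^1))'$. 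Since $\Diff_+(S^1)$ is generated by localized diffeomorphisms, showing $V_{\phi_I}\in\bigl(\bigvee_{J}\B(J)\bigr)'$ immediately gives $U(\gamma)V_{\phi_I}U(\gamma)^{-1}=V_{\phi_I}$ for every $\gamma\in\Diff_+(S^1)$, which is the claim.

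\textbf{The local input.} First I would record what comes for free from discreteness. By Lemma \ref{adjauto} the map $\phi_I$ is automatically $\B(I)$-bimodular, $\omega_I$-preserving and $\omega_I$-adjointable. Repeating verbatim the computation in the proof of Lemma \ref{phiinUCPAisDiffcovar} (now with $\B(I)$ in place of $\Vir_\A(I)$), bimodularity together with cyclicity and separation of $\Omega$ gives $V_{\phi_I}\in\B(I)'$; in particular $V_{\phi_I}$ commutes with $U(\gamma)$ for $\gamma\in\Diff_+(I)$. Adjointability, via Proposition \ref{ACadjoint}, gives $V_{\phi_I}\Delta_I^{it}=\Delta_I^{it}V_{\phi_I}$ and $V_{\phi_I}J_I=J_IV_{\phi_I}$. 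Thus $V_{\phi_I}$ lies in $\B(I)'\cap J_I\B(I)'J_I=\B(I)'\cap\langle\A(I),e_\B\rangle$, the relative commutant of $\B(I)$ in the Jones basic construction; commutation with $J_I$ and the Bisognano--Wichmann reflection also yield commutation with $U(\gamma)$ for $\gamma\in\Diff_+(I')$.

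\textbf{Promotion to a global intertwiner.} The decisive step is to show that this relative commutant consists of global intertwiners, i.e.\ $\B(I)'\cap\langle\A(I),e_\B\rangle\subset\bigl(\bigvee_J\B(J)\bigr)'$. Here I would invoke the discrete, braided-local structure of the subfactor $\B(I)\subset\A(I)$ established in \cite{BiDeGi2021} (cf.\ Proposition 9.20 and the analysis there). Decomposing $\iota$ into finite-dimensional irreducible DHR sectors of $\B$ that can be localized in $I$, the relative commutant $\B(I)'\cap\langle\A(I),e_\B\rangle$ is spanned by localized charged intertwiners and is canonically identified with $\End(\iota)=\bigoplus_i M_{n_i}(\CC)=\bigl(\bigvee_J\B(J)\bigr)'$, the algebra of global self-intertwiners. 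Transportability of the DHR charges, via Haag duality for $\B$, is what turns an element of the local relative commutant into an operator commuting with every $\B(J)$. This places $V_{\phi_I}$ in $\bigl(\bigvee_J\B(J)\bigr)'$ and closes the argument through the reduction of the first paragraph.

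\textbf{Main obstacle.} The hard part is precisely this last promotion across the two boundary points of $I$: commutation with $\B(I)$ and with $\B(I')$ alone does not control $U(\gamma)$ for $\gamma$ supported in an interval $K$ straddling $\partial I$ (one has neither $K\subset I$ nor $K\subset I'$), and in the absence of strong additivity such straddling intervals cannot be reached by additivity from $I$ and $I'$. It is exactly discreteness — the finite-dimensionality and transportability of the DHR sectors — that rigidifies the local commutant element $V_{\phi_I}$ into a global intertwiner; this is why the statement, and with it the extension in Theorem \ref{discretecase}, is proved only under the discreteness hypothesis and is left open for general irreducible inclusions.
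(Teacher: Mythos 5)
Your overall skeleton is the paper's: the reduction (it suffices to show $V_{\phi_I}\in\bigl(\bigvee_{J\in\cI}\B(J)\bigr)'$, since $\Vir_\A\subset\B$ by Proposition \ref{irrediffintermediate} and $\Diff_+(S^1)$ is generated by localized diffeomorphisms) and the local input (Lemma \ref{adjauto} gives $\B(I)$-bimodularity and $\omega_I$-adjointability, hence $V_{\phi_I}\in\B(I)'$ and, via Proposition \ref{ACadjoint}, $V_{\phi_I}\in J_I\B(I)'J_I=\A_1(I)$, the Jones extension of $\A(I)$ by the Jones projection $e_I$) are exactly the paper's first and last steps. The problem is the step you yourself call decisive. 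You claim $\B(I)'\cap\A_1(I)\subset\bigl(\bigvee_{J\in\cI}\B(J)\bigr)'$ because this relative commutant ``is spanned by localized charged intertwiners and is canonically identified with $\End(\iota)$'', citing \cite[Proposition 9.20]{BiDeGi2021}. That proposition establishes only properties of the single subfactor $\B(I)\subset\A(I)$ (irreducibility, discreteness, braided locality, type $\III$); it contains no statement about commutation with the algebras $\B(K)$ for intervals $K$ straddling the endpoints of $I$, which, as your own ``main obstacle'' paragraph explains, is precisely what the lemma asserts. The identification you invoke is therefore essentially a restatement of the claim to be proven, and neither that citation nor the one-sentence appeal to transportability and Haag duality for $\B$ amounts to an argument: a local-to-global intertwiner statement of this kind is about the net, not about the abstract subfactor.

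The paper closes exactly this gap in two short moves. First, an elementary localization: $\A_1(I)$ is generated by $\A(I)$ and $e_I$, and since $\A(I)\subset\B(I')'$ (relative locality) and $e_I=e_{I'}\in\B(I')'$, one gets $\B(I)'\cap\A_1(I)\subset\B(I)'\cap\B(I')'=\bigl(\B(I)\vee\B(I')\bigr)'$. Second, it invokes \cite[Proposition 3.3 (a)]{Ca2004}, valid for compact-type (i.e.\ irreducible discrete) subsystems, which yields precisely that elements of $\bigl(\B(I)\vee\B(I')\bigr)'$ lie in $\bigl(\bigvee_{J\in\cI}\B(J)\bigr)'$; that two-interval result is where the finite-dimensional sector decomposition and transportability you gesture at are actually deployed, and it is proved in that reference rather than re-derived. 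To repair your proof, replace your citation of \cite{BiDeGi2021} either by this passage through $\bigl(\B(I)\vee\B(I')\bigr)'$ followed by Carpi's proposition, or by a genuinely worked-out transportability argument; as written, the central step is asserted, not proved.
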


\begin{proof}
By Lemma \ref{adjauto}, $\phi_I$ is automatically $\B(I)$-bimodular and $\omega_I$-adjointable. We argue that $V_{\phi_I}\in \A_1(I)\cap \B(I)^\prime$, where $\A_1(I)$ is the von Neumann algebra generated by $\A(I)$ and by the Jones projection $e_I$ of $\A$ onto $\B$ as in \eqref{Jonesproj}. Namely, $\A_1(I)$ is the Jones extension of $\A(I)$ with respect to $\B(I)$. By $\B(I)$-bimodularity, clearly $V_{\phi_I}\in \B(I)^\prime$. By Proposition \ref{ACadjoint}, $V_{\phi_I}$ commutes with the Tomita conjugation $J$ of $\A(I)$ with respect to the vacuum vector, hence $V_{\phi_I}\in J\B(I)^\prime J=\A_1(I)$. Consequently, $V_{\phi_I}\in \A_1(I)\cap \B(I)^\prime\subset \B(I^\prime)^\prime\cap \B(I)^\prime=(\B(I^\prime)\vee \B(I))^\prime$, where the inclusion follows from relative locality, \ie, $\A(I)\subset \B(I^\prime)^\prime$, and $e_I=e_{I^\prime}\in \B(I^\prime)^\prime$. Note that the commutants and the von Neumann algebra generated are taken in the vacuum Hilbert space of $\A$.
By \cite[Proposition 3.3 (a)]{Ca2004}, using the discreteness assumption, we thus also have $V_{\phi_I}\in (\bigvee_{I\in\cI}\B(I))'$. By irreducibility and by Proposition \ref{irrediffintermediate}, we have $\Vir_\A\subset\B$. Thus $V_{\phi_I}\in (\bigvee_{I\in\cI}\Vir_\A(I))^{\prime}$ and $U(\gamma)V_{\phi_I}U(\gamma)^{-1}=V_{\phi_I}$ for every $\gamma\in\Diff_+(S^1)$, as $\Diff_+(S^1)$ is generated by localized diffeomorphisms.
\end{proof}

As a consequence of these two lemmas, for irreducible discrete conformal inclusions, the structure of $\UCP(\A|\B)$ (Definition \ref{relQuOp}) is completely determined by a single subfactor $\B(I)\subset\A(I)$:

\begin{thm}\label{discretecase}
Let $\B\subset\A$ be an irreducible discrete conformal inclusion. Let $I\in\cI$ be fixed. Then the map: 
\begin{align}
\UCP(\A|\B)&\rightarrow  \UCP(\A(I)|\B(I))\\
\phi&\mapsto\phi_I
\end{align}
is an affine homeomorphism of convex topological spaces.

In particular, the extreme points are homeomorphic: $\QuOp(\A|\B) \cong \Extr(\UCP(\A(I)|\B(I)))$.
\end{thm}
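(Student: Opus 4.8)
The plan is to exhibit the localization map $\ell_I : \phi\mapsto\phi_I$ as the desired affine homeomorphism by constructing an explicit inverse and then upgrading a continuous affine bijection to a homeomorphism via compactness. The map $\ell_I$ is manifestly affine and, by the very definition of the topology on $\UCP(\A)$ (Definition \ref{UCPA}), continuous; since any $\phi\in\UCP(\A|\B)$ fixes $\B(I)$ pointwise, $\ell_I$ indeed takes values in $\UCP(\A(I)|\B(I))$. It remains to produce a two-sided inverse $\psi$.

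For surjectivity I would start from an arbitrary $\phi_I\in\UCP(\A(I)|\B(I))$. By Lemma \ref{adjauto} it is automatically $\omega_I$-preserving and $\omega_I$-adjointable, and since irreducibility gives $\Vir_\A\subset\B$ by Proposition \ref{irrediffintermediate}, the map $\phi_I$ is in particular $\Vir_\A(I)$-fixing. Thus Lemma \ref{localbijection} applies and produces a conformally covariant, vacuum preserving family $\{\phi_J : J\in\cI\}$, given by $\phi_{gI} := \Ad U(g)\circ\phi_I\circ\Ad U(g)^{-1}$ whenever $gI = J$, satisfying conditions (ii) and (iii) of Definition \ref{UCPA}. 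What is still missing is the compatibility condition (i), and this is the step where discreteness enters and which I expect to be the crux of the argument.

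The key input is Lemma \ref{compatibility}: the implementing operator $V_{\phi_I}$ commutes with $U(\gamma)$ for every $\gamma\in\Diff_+(S^1)$ and lies in $(\bigvee_{J\in\cI}\B(J))'$. Computing the implementing operator of $\phi_{gI}$ from its definition (using $U(g)\Omega = \Omega$ and the density of $\A(gI)\Omega$) yields $V_{\phi_{gI}} = U(g)V_{\phi_I}U(g)^{-1}$, which by the lemma equals $V_{\phi_I}$. Hence all the local maps $\phi_J$ share a single implementing operator $V := V_{\phi_I}$, independent of $J$. Compatibility (i) then follows at once: for $J_1\subset J_2$ and $x\in\A(J_1)$, both $\phi_{J_1}(x)$ and $\phi_{J_2}(x)$ lie in $\A(J_2)$ and satisfy $\phi_{J_k}(x)\Omega = Vx\Omega$, so they coincide because $\Omega$ is separating for $\A(J_2)$. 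The same separating argument, together with $V\in\B(J)'$ and $V\Omega=\Omega$, shows each $\phi_J$ fixes $\B(J)$ pointwise. Thus $\{\phi_J\}$ defines an element $\psi(\phi_I)\in\UCP(\A|\B)$; by construction $\ell_I\circ\psi = \id$, and $\psi\circ\ell_I = \id$ follows from the conformal covariance (iii) of any $\phi\in\UCP(\A|\B)$ together with the transitivity of the $\PSL(2,\RR)$-action on $\cI$.

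Finally, $\psi$ is affine because for each fixed $g$ the assignment $\phi_I\mapsto\Ad U(g)\circ\phi_I\circ\Ad U(g)^{-1}$ is linear and hence preserves convex combinations. To conclude that $\ell_I$ is a homeomorphism I would invoke compactness rather than verifying continuity of $\psi$ directly: $\UCP(\A|\B)$ is closed in the compact Hausdorff space $\UCP(\A)$ (Theorem \ref{UCPcompact}), hence itself compact, while $\UCP(\A(I)|\B(I))$ is Hausdorff in the pointwise ultraweak topology, so the continuous bijection $\ell_I$ is automatically a homeomorphism. The statement on extreme points is then immediate: an affine homeomorphism of convex sets restricts to a bijection of extreme points, and $\QuOp(\A|\B) = \Extr(\UCP(\A|\B))$ by Lemma \ref{extremisnice}, whence $\QuOp(\A|\B)\cong\Extr(\UCP(\A(I)|\B(I)))$.
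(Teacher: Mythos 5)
Your proposal is correct and follows essentially the same route as the paper: both reduce to Lemma \ref{adjauto} plus Lemma \ref{localbijection} to build the covariant family $\{\phi_J\}$, and both use Lemma \ref{compatibility} to show that all the $\phi_J$ share the single implementing operator $V_{\phi_I}$, from which compatibility (and the $\B(J)$-fixing property) follow by the cyclic/separating property of $\Omega$. The only cosmetic difference is at the final step, where the paper deduces the homeomorphism property by repeating the openness argument from Theorem \ref{UCPcompact}, while you invoke the standard fact that a continuous bijection from a compact space ($\UCP(\A|\B)$ being closed in $\UCP(\A)$) to a Hausdorff space is a homeomorphism --- both are valid.
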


\begin{proof}
The map $\phi\mapsto \phi_I$ is injective by M{\"o}bius covariance of the map $\phi$ as in Lemma \ref{localbijection}. To show surjectivity, given $\phi_I\in\UCP(\A(I)|\B(I)$, by Lemma \ref{adjauto}, we can use the same proof of  Lemma \ref{localbijection} to contruct a collection of unital completely positive maps $\{\phi_J : J\in\cI\}$ fulfilling (ii) and (iii) in Definition \ref{UCPA} and such that $\phi_I$ is the prescribed map.
Namely, $\phi_{J} := \Ad U(g) \circ \phi_I \circ \Ad U(g)^{-1}$ for $g\in\PSL(2,\RR)$ such that $gI = J$, irrespectively of the choice of $g$. 
By Lemma \ref{compatibility}, the collection of maps $\{\phi_J : J\in\cI\}$ is compatible, \ie, it fulfills (i) in Definition \ref{UCPA}.  
Indeed, if $x\in\A(J)$, then $V_{\phi_J} x \Omega = \phi_J(x) \Omega = U(g) \phi_I(U(g)^{-1} x U(g)) U(g)^{-1} \Omega = U(g) V_{\phi_I} U(g)^{-1} x \Omega$ because $U(h) \Omega = \Omega$ for every $h\in\PSL(2,\RR)$ and $U(g)^{-1} x U(g)\in\A(I)$. By the cyclicity of $\Omega$, $V_{\phi_J} = U(g) V_{\phi_I} U(g)^{-1}$, hence $V_{\phi_J} = V_{\phi_I}$ by Lemma \ref{compatibility} for every $J\in\cI$. This immediately entails the compatibility condition when $J_1,J_2\in\cI$ are such that $J_1\subset J_2$. Thus $\{\phi_J : J\in\cI\}\in\UCP(\A|\B)$. The fact that the map $\phi\mapsto \phi_I$ is a homeomorphism follows as in the proof of Theorem \ref{UCPcompact}.
\end{proof}

In \cite{BiDeGi2021}, we showed that for an irreducible discrete local type $\III$ subfactor $\N\subset\M$, the set $\Extr(\UCP(\M|\N))$, therein denoted by $K(\N\subset\M)$, is closed (hence compact) in the pointwise ultraweak operator topology and it has the structure of a \emph{compact hypergroup} \cite[Definition 3.2]{BiDeGi2021}. Our notion is inspired and closely related to other notions of hypergroup/hypercomplex system present in the literature \cite{BlHe1995}, \cite{BeKa1998}. We recall below our definition. For finite sets it reduces to the purely algebraic notion of finite hypergroup, see, \eg, \cite{SuWi2003}. Moreover, every finite or compact group is also a finite or compact hypergroup, \cf \cite[Example 3.5]{BiDeGi2021}.

\begin{defi}\label{abstractcpthyp}
Let $K$ be a compact Hausdorff space. Denote by $P(K)$ the convex set of (positive) probability Radon measures on $K$, by $C(K)$ the algebra of continuous complex valued functions on $K$, and by $\delta_x$ the normalized Dirac measure on $x$. Then $K$ is called a \textbf{compact hypergroup} if it is equipped with a biaffine operation called \emph{convolution}:
\begin{align}
P(K)\times P(K)\to P(K), \quad (\mu,\nu) \mapsto \mu\ast\nu,
\end{align}
with an \emph{involution} $K \to K, x \mapsto x^\sharp$, and with an \emph{identity element} $e\in K$, fulfilling:
\begin{enumerate}
\item[(i)] $P(K)$ is a monoid with involution with respect to $\ast, \,^\sharp, \delta_e$, where the involution on $P(K)$ is defined by $\mu^\sharp(E) := \mu(E^\sharp)$ for every Borel set $E\subset K$.
\item[(ii)] The involution $x \mapsto x^\sharp$ is continuous on $K$ and the map:
\begin{align}
(x,y)\in K\times K \mapsto \delta_x\ast\delta_y \in P(K)
\end{align}
is jointly continuous with respect to the weak* topology on measures.
\item[(iii)] There exists a (unique) faithful probability measure $\mu_K\in P(K)$, called a \emph{Haar measure},
such that for every $f,g\in C(K)$ and $y\in K$ it holds:
\begin{align}
\int_Kf(y\ast x) g(x)\dd\mu_K(x) &= \int_Kf(x)g(y^\sharp \ast x) \dd\mu_K(x), \\
\int_Kf(x\ast y) g(x)\dd\mu_K(x) &= \int_Kf(x)g(x \ast y^\sharp) \dd\mu_K(x),
\end{align}
where
\begin{align}
f(x\ast y ) := (\delta_x\ast\delta_y)(f) = \int_Kf(z)\dd(\delta_x \ast \delta_y)(z).
\end{align}
\end{enumerate}
\end{defi}

For later use, we recall \cite[Definition 5.1]{BiDeGi2022}, \cf \cite[Definition 1.5.1]{BlHe1995}:

\begin{defi}
A \textbf{closed subhypergroup} of $K$ is a closed subset $H\subset K$ which is closed under the operations of $K$: $\delta_x\ast\delta_y$ is supported on $H$ for every $x,y\in H$, and $x^\sharp \in H$, $e\in H$, and which admits a Haar measure in $P(H)$. In particular, $H$ is a compact hypergroup as well.
\end{defi}

We also need the following notion from convex analysis, see, \eg, \cite{AlfBook}, \cite{AlfShuBook}: 

\begin{defi}
A compact subset $X$ of a locally convex space is said to be a \textbf{Bauer simplex} if the extreme points $\Extr(X)$ are closed (hence compact) in $X$, and, for every point $x\in X$, there exists a unique (positive) probability Radon measure $\mu_x$ supported on Extr$(X)$ whose barycenter is $x$, \ie, for every affine function $f:X\rightarrow \RR$, it holds $\int_X f d\mu_x = f(x)$.
\end{defi}

By combining Theorem \ref{discretecase} above with \cite{BiDeGi2021},  we can argue that for an irreducible discrete inclusion of conformal nets $\B\subset\A$ the set of quantum operations $\QuOp(\A|\B)$ has naturally the structure of a compact hypergroup, and that $\UCP(\A|\B)$ is a Bauer simplex:

\begin{thm}\label{QuOpHyper}
Let $\B\subset\A$ be an irreducible discrete conformal inclusion. Then
\begin{enumerate}
\item[$(1)$] The set $\UCP(\A|\B)$ is a Bauer simplex. In particular, $\UCP(\A|\B)$ is affinely homeomorphic to $P(\QuOp(\A|\B))$ via the map $\phi\mapsto \mu_{\phi}$. 
\smallskip
\item[$(2)$] The subset $\QuOp(\A|\B)$ is a compact hypergroup with the following operations:
\begin{itemize}
\item The convolution is induced by the composition in $\UCP(\A|\B)$.
\item The involution is induced by the $\omega$-adjunction.
\item The Haar measure is the unique probability Radon measure supported on $\QuOp(\A|\B)$ with barycenter $E_\B\in \UCP(\A|\B)$, the standard conditional expectation of $\A$ onto $\B$. 
\end{itemize}
\end{enumerate}
\end{thm}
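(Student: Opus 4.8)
The plan is to reduce the entire statement to the affine homeomorphism of Theorem \ref{discretecase} together with the single-subfactor results of \cite{BiDeGi2021}, transporting all structure along the localization map. Fix $I\in\cI$. By Theorem \ref{discretecase} the map $\phi\mapsto\phi_I$ is an affine homeomorphism
\begin{align}
\UCP(\A|\B)\xrightarrow{\ \sim\ }\UCP(\A(I)|\B(I)).
\end{align}
By Lemma \ref{extremisnice} we have $\QuOp(\A|\B)=\Extr(\UCP(\A|\B))$, and this homeomorphism carries it onto $\Extr(\UCP(\A(I)|\B(I)))$, denoted $K(\B(I)\subset\A(I))$ in \cite{BiDeGi2021}. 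As recalled at the end of Section \ref{RepnConfNets}, the subfactor $\B(I)\subset\A(I)$ is irreducible, discrete, (braided) local and type $\III$, so the results of \cite{BiDeGi2021} apply.

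For Part $(1)$, I would invoke from \cite{BiDeGi2021} that $K(\B(I)\subset\A(I))$ is closed, hence compact, and that $\UCP(\A(I)|\B(I))$ is affinely homeomorphic to $P(K(\B(I)\subset\A(I)))$. Since the probability Radon measures on a compact Hausdorff space form a Bauer simplex, with the Dirac measures as extreme points, this exhibits $\UCP(\A(I)|\B(I))$ as a Bauer simplex. The homeomorphism above then transports this structure to $\UCP(\A|\B)$; composing the two identifications and using that in a Bauer simplex every point has a unique representing measure supported on the extreme points yields the affine homeomorphism $\UCP(\A|\B)\cong P(\QuOp(\A|\B))$, $\phi\mapsto\mu_\phi$.

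For Part $(2)$, the strategy is to check that the operations declared on $\QuOp(\A|\B)$ correspond, under the homeomorphism, to the compact hypergroup operations on $K(\B(I)\subset\A(I))$ produced in \cite{BiDeGi2021}. The composition localizes, $(\phi_1\circ\phi_2)_I=(\phi_1)_I\circ(\phi_2)_I$ (Definition \ref{compoadjointUCPA}), and the $\omega$-adjunction localizes, $(\phi^\sharp)_I=(\phi_I)^\sharp$, so the induced convolution and involution on $P(\QuOp(\A|\B))$ match those coming from composition and $\omega$-adjunction of maps on the single algebra $\A(I)$. Under the homeomorphism the identity $\id_\A$ corresponds to $\id_{\A(I)}$, and the standard conditional expectation $E_\B$ corresponds to $E_I:\A(I)\to\B(I)$, whose representing measure is the Haar measure of $K(\B(I)\subset\A(I))$ by \cite{BiDeGi2021}; hence the unique probability measure on $\QuOp(\A|\B)$ with barycenter $E_\B$ is the Haar measure. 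Since the axioms of Definition \ref{abstractcpthyp} are phrased purely in terms of convolution, involution, identity and Haar integration, they transport verbatim along the affine homeomorphism.

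The step I expect to require the most care is the compatibility of the construction with the interval $I$: one must know that the hypergroup structure obtained on $\QuOp(\A|\B)$ is independent of the chosen $I\in\cI$. This holds because for $I,J\in\cI$ the two localization homeomorphisms differ by conjugation with $\Ad U(g)$, $gI=J$, which is an affine homeomorphism intertwining composition and $\omega$-adjunction (by the covariance built into Definition \ref{UCPA} and by Lemma \ref{compatibility}); thus the transported convolution, involution and Haar measure agree. Once this independence is recorded, every axiom of Definition \ref{abstractcpthyp} is inherited for free from the corresponding statement for the single subfactor, and the theorem follows with no further analytic input.
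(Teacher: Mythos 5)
Your proposal is correct and takes essentially the same approach as the paper: the paper's proof likewise combines Theorem \ref{discretecase} with the single-subfactor results of \cite{BiDeGi2021} (Theorem 4.34 there for the Bauer simplex property, Theorem 4.51 there for the compact hypergroup structure), with the second part of statement $(1)$ being the standard characterization of Bauer simplices. The additional verifications you spell out---that composition and $\omega$-adjunction localize, that $E_\B$ localizes to $E_I$, and that the transported structure is independent of the chosen interval via conjugation by $\Ad U(g)$---are correct and merely make explicit what the paper's terse proof leaves implicit.
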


\begin{proof}
The first statement is the combination of Theorem \ref{discretecase} with \cite[Theorem 4.34]{BiDeGi2021}. The second part of the first statement is a characterization of Bauer simplex, see, \eg, \cite[Proposition 1.1]{Phe01}. The second statement follows from Theorem \ref{discretecase} and \cite[Theorem 4.51]{BiDeGi2021}.
\end{proof}

As for compact groups of automorphisms, for the canonical compact hypergroup of quantum operations the defining representation $\iota$ of $\B$ on the vacuum Hilbert space of $\A$ can be identified with the regular representation. We recall, \eg, from \cite[Chapter 2]{BlHe1995}, \cite[Section 6]{BiDeGi2021}:

\begin{defi}\label{cpthyprep}
Let $K$ be a compact hypergroup and let $M(K)$ be the associated unital involutive Banach algebra of complex Radon measures. A \textbf{representation} $\pi$ of $K$ on a Hilbert space $\Hil_\pi$ is a unital involutive algebra morphism $\pi: M(K) \to \B(\Hil_\pi)$. Note that $\pi$ is automatically norm decreasing, $\|\pi(\mu)\| \leq \|\mu\|$. A representation $\pi$ is called \emph{continuous} if its restriction to the positive measures is continuous from the weak* topology to the weak operator topology.
\end{defi}

The following two results are a combination of Theorem \ref{QuOpHyper} with \cite[Theorem 6.4]{BiDeGi2021} and \cite[Theorem 6.5]{BiDeGi2021}, respectively, to which we refer for further details.

\begin{thm}
Let $\B\subset\A$ be an irreducible discrete conformal inclusion. Then $\iota$ can be identified with the direct sum of all continuous irreducible representations $\pi$ of the compact hypergroup $\QuOp(\A|\B)$, each counted with multiplicity equal to $\dim(\Hil_\pi)$. 
\end{thm}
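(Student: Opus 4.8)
The plan is to reduce the statement, which concerns the whole net $\B\subset\A$, to the corresponding statement for the single subfactor $\B(I)\subset\A(I)$, where it is already available, and then to transport the conclusion back through the identifications of Theorem \ref{discretecase} and Theorem \ref{QuOpHyper}.

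First I would fix $I\in\cI$ and recall from Theorem \ref{discretecase} that the localization map $\phi\mapsto\phi_I$ is an affine homeomorphism of $\UCP(\A|\B)$ onto $\UCP(\A(I)|\B(I))$, carrying $\QuOp(\A|\B)$ onto $\Extr(\UCP(\A(I)|\B(I)))$, the set denoted $K(\B(I)\subset\A(I))$ in \cite{BiDeGi2021}. Since, by the construction in Theorem \ref{QuOpHyper}, the hypergroup operations on $\QuOp(\A|\B)$ are induced by composition and $\omega$-adjunction in $\UCP(\A|\B)$, and these localize to the operations defining the hypergroup structure of $K(\B(I)\subset\A(I))$ at the level of the single subfactor, this homeomorphism is in fact an isomorphism of compact hypergroups. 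Passing to the associated measure algebras, it induces a bijection between the continuous irreducible representations of $\QuOp(\A|\B)$ (Definition \ref{cpthyprep}) and those of $K(\B(I)\subset\A(I))$, preserving the dimensions $\dim(\Hil_\pi)$.

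Next I would realize the vacuum Hilbert space $\Hil$ of $\A$ as a representation space of the hypergroup. For $\phi\in\UCP(\A|\B)$ the implementing operator $V_\phi$ is independent of $I$ by Lemma \ref{Vphiglobal} and lies in $(\bigvee_{I\in\cI}\B(I))'$ by Lemma \ref{compatibility}; moreover $V_{\phi_1\circ\phi_2}=V_{\phi_1}V_{\phi_2}$, $V_{\phi^\sharp}=V_\phi^*$ and $V_{\id_\A}=1$. Hence $\mu\mapsto\int_{\QuOp(\A|\B)}V_\phi\dd\mu(\phi)$ defines a continuous representation of $\QuOp(\A|\B)$ on $\Hil$ that commutes with $\iota(\B)$. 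Under the localization isomorphism above this is carried exactly to the representation of $K(\B(I)\subset\A(I))$ on $\Hil$ studied in \cite{BiDeGi2021}, since the operator $V_\phi=V_{\phi_I}$ attached to $\phi$ and to its localization $\phi_I$ is the same by Lemma \ref{Vphiglobal}.

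Finally I would invoke \cite[Theorem 6.4]{BiDeGi2021}, which identifies this representation with the regular representation of $K(\B(I)\subset\A(I))$ and, via the hypergroup analogue of the Peter--Weyl theorem, decomposes it as the direct sum of all continuous irreducible representations, each with multiplicity $\dim(\Hil_\pi)$; transporting back through the hypergroup isomorphism yields the asserted decomposition of $\iota$. The main obstacle I anticipate is not the reduction itself but the bookkeeping of the representation-theoretic identification: one must verify that the irreducible subrepresentations of $\iota$ occurring by discreteness (Definition \ref{discretesubnets}) correspond bijectively to the continuous irreducible representations $\pi$ of the hypergroup and that each occurs with multiplicity exactly $\dim(\Hil_\pi)$. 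This matching is precisely the content transported from \cite[Theorem 6.4]{BiDeGi2021}, so the remaining work is to confirm that the localization isomorphisms of Theorem \ref{discretecase} and Theorem \ref{QuOpHyper} are compatible with this representation theory and that passing from the single subfactor $\B(I)\subset\A(I)$ to the whole conformal net introduces no discrepancy.
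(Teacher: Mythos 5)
Your proposal is correct and follows essentially the same route as the paper, which proves this result precisely as "a combination of Theorem \ref{QuOpHyper} with \cite[Theorem 6.4]{BiDeGi2021}": localize via Theorem \ref{discretecase}, note that the hypergroup structure and the implementing operators $V_\phi$ are the same at the level of the net and of the single subfactor $\B(I)\subset\A(I)$, and transport the decomposition of \cite[Theorem 6.4]{BiDeGi2021} back to $\iota$. Your additional bookkeeping (multiplicativity $V_{\phi_1\circ\phi_2}=V_{\phi_1}V_{\phi_2}$, $V_{\phi^\sharp}=V_\phi^*$, and compatibility of the localization isomorphism with the measure-algebra representations) is exactly the detail the paper leaves implicit by deferring to \cite[Section 6]{BiDeGi2021}.
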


Besides the usual vector space notion of dimension of $\Hil_\pi$, a representation of a compact hypergroup has its own notion of dimension, called \textbf{hyperdimension}, introduced by Vrem \cite{Vr1979} and denoted by $k_\pi$, \cf \cite[Section 6]{BiDeGi2021}. In general, $\dim(\Hil_\pi) \leq k_\pi$.

\begin{thm}
The hyperdimension of each $\pi$ equals the tensor \Cstar-categorical dimension of the associated irreducible representation $\rho_\pi$ appearing in the decomposition of $\iota$. In symbols, $k_\pi = d(\rho_\pi)$.  
\end{thm}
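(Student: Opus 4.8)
The plan is to transport \cite[Theorem 6.5]{BiDeGi2021} from the local subfactor setting to the conformal net setting through the hypergroup isomorphism already at our disposal, and then to reconcile the two notions of dimension that appear. Fix $I\in\cI$. By Theorem \ref{discretecase} the localization map $\phi\mapsto\phi_I$ is an affine homeomorphism of $\UCP(\A|\B)$ onto $\UCP(\A(I)|\B(I))$, and by Theorem \ref{QuOpHyper} it restricts to an isomorphism of compact hypergroups carrying $\QuOp(\A|\B)$ onto $\Extr(\UCP(\A(I)|\B(I)))$, the subfactor hypergroup $K(\B(I)\subset\A(I))$ of \cite{BiDeGi2021}. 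Since the hyperdimension $k_\pi$ is defined purely in terms of the abstract compact hypergroup and its continuous representation $\pi$ (via the Haar measure), this isomorphism puts the continuous irreducible representations of the two hypergroups in bijection and preserves their hyperdimensions.

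Next I would identify the sectors $\rho_\pi$. By the preceding theorem, $\iota$ decomposes as the direct sum of the irreducible DHR representations $\rho_\pi$ of $\B$ associated with the hypergroup representations $\pi$, each with multiplicity $\dim(\Hil_\pi)$. Restricting such a $\rho_\pi$ to the local algebra $\B(I)$ and invoking Haag duality as in Section \ref{RepnConfNets} yields, up to unitary equivalence, an irreducible endomorphism of $\B(I)$; by \cite[Proposition 9.20]{BiDeGi2021} this endomorphism is exactly the sector of the discrete local subfactor $\B(I)\subset\A(I)$ attached, under the isomorphism above, to the representation $\pi$ of $K(\B(I)\subset\A(I))$. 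The decisive point is that the intrinsic tensor \Cstar-categorical dimension is unchanged under localization: $d(\rho_\pi)$ computed in $\Rep(\B)$ coincides with the subfactor dimension of the associated endomorphism of $\B(I)$, because passing to the localized endomorphism is a dimension-preserving tensor functor.

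With both bijections in place, the equality $k_\pi=d(\rho_\pi)$ is precisely \cite[Theorem 6.5]{BiDeGi2021} for the subfactor $\B(I)\subset\A(I)$, read through the identifications of representations and of sectors. The main obstacle — and the reason the statement is a genuine combination rather than an immediate corollary — is this compatibility of the two dimension functions: one must check that the sector of $\B(I)\subset\A(I)$ attached to $\pi$ in \cite{BiDeGi2021} agrees, as an object of the relevant \Cstar-tensor category, with the localization of the net-level representation $\rho_\pi$, so that their intrinsic dimensions match. This is guaranteed by Theorems \ref{discretecase} and \ref{QuOpHyper} together with \cite[Proposition 9.20]{BiDeGi2021}, which identify not merely the underlying hypergroups but the full representation-theoretic and categorical data on both sides.
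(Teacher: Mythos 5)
Your proof is correct and takes essentially the same route as the paper, which offers no separate argument beyond stating that the result is a combination of Theorem \ref{QuOpHyper} with \cite[Theorem 6.5]{BiDeGi2021}: transport the subfactor-level identity $k_\pi = d(\rho_\pi)$ through the hypergroup isomorphism coming from localization (Theorems \ref{discretecase} and \ref{QuOpHyper}), matching sectors via \cite[Proposition 9.20]{BiDeGi2021}. Your write-up simply makes explicit the compatibility checks that the paper delegates to the cited references.
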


\section{Galois correspondence}

In this section, we establish a Galois-type correspondence between irreducible (Definition \ref{irredinclusion}) conformal subnets of a given conformal net $\A$ (Definition \ref{confnet} and \ref{confsubnetdef}) and suitable subsets of all quantum operations on $\A$ (Definition \ref{QuOpA}), mainly in the discrete case (Definition \ref{discretesubnets}). 

Let $\B\subset\A$ be an irreducible conformal inclusion. $\UCP(\A)$ and $\UCP(\A|\B)$ (Definition \ref{UCPA} and \ref{relQuOp}) are non-empty convex compact Hausdorff spaces (Theorem \ref{UCPcompact}) and also monoids with involution with respect to the (affine) composition and $\omega$-adjunction operations (Definition \ref{compoadjointUCPA}). Recall from Proposition \ref{irrediffintermediate} that $\B\subset\A$ irreducible is equivalent to $\Vir_\A \subset\B\subset\A$, where $\Vir_\A$ is the Virasoro subnet of $\A$.

\begin{lem}\label{stdEisHaarelement}
 The standard conditional expectation $E_{\Vir}$ of $\A$ onto $\Vir_\A$ is a self-involutive projection in $\UCP(\A)$: $E_{\Vir} = E_{\Vir}^\sharp = E_{\Vir} \circ E_{\Vir}$, with the absorption property: $\phi \circ E_{\Vir} = E_{\Vir} = E_{\Vir} \circ \phi$ for every other $\phi\in\UCP(\A)$. Similarly for
the standard conditional expectation $E_\B$ of $\A$ onto $\B$ in $\UCP(\A|\B)$. 
\end{lem}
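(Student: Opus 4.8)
The plan is to prove both statements at once, noticing that $E_{\Vir}$ is the special case $\B=\Vir_\A$ of $E_\B$: indeed $\UCP(\A)=\UCP(\A|\Vir_\A)$, as observed right after Definition \ref{relQuOp}. So I fix an irreducible conformal inclusion $\B\subset\A$, abbreviate $E_I:=(E_\B)_I:\A(I)\to\B(I)$, and recall that $E_\B$ is itself a vacuum-preserving, compatible, conformally covariant, $\B$-fixing family, hence $E_\B\in\UCP(\A|\B)\subset\UCP(\A)$ (compare the discussion of $E_{\Vir}\in\UCP(\A)$ in Section \ref{QuOponConfNets}). Let $\phi\in\UCP(\A|\B)$ be arbitrary. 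Since equality in $\UCP(\A)$ is componentwise, it suffices to establish $E_I\circ E_I=E_I$, $E_I^\sharp=E_I$, $\phi_I\circ E_I=E_I$ and $E_I\circ\phi_I=E_I$ for each fixed $I\in\cI$.

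The idempotency $E_I\circ E_I=E_I$ is immediate, as $E_I$ is a conditional expectation acting as the identity on its range $\B(I)$. For the self-adjointness $E_I^\sharp=E_I$ I would verify the defining relation \eqref{equationACadjoint} directly, using that $E_I$ is $\omega_I$-preserving and $\B(I)$-bimodular: for $x,y\in\A(I)$, since $E_I(y)\in\B(I)$,
\begin{align}
\omega_I(x\,E_I(y))=\omega_I(E_I(x\,E_I(y)))=\omega_I(E_I(x)\,E_I(y)),
\end{align}
and symmetrically $\omega_I(E_I(x)\,y)=\omega_I(E_I(x)\,E_I(y))$; hence $\omega_I(x\,E_I(y))=\omega_I(E_I(x)\,y)$, which is exactly \eqref{equationACadjoint} with $\phi=E_I$ and candidate adjoint $E_I$. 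Thus $E_I$ is its own $\omega_I$-adjoint. The right absorption $\phi_I\circ E_I=E_I$ is then the easy identity: $E_I$ takes values in $\B(I)$ and $\phi_I$ fixes $\B(I)$ pointwise by the definition of $\UCP(\A|\B)$, so $\phi_I(E_I(x))=E_I(x)$ for all $x$.

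The left absorption $E_I\circ\phi_I=E_I$ is where the actual argument lies, and it is the step I expect to be the main obstacle. The strategy is to show that $E_I\circ\phi_I$ is again a normal, faithful, vacuum-preserving conditional expectation of $\A(I)$ onto $\B(I)$ and then appeal to its uniqueness (Takesaki's theorem, as in Remark \ref{stdcondexpandJonesproj}). Concretely, $E_I\circ\phi_I$ is completely positive, unital, normal and faithful as a composition of maps with these properties; its range lies in $\B(I)$ and it restricts to the identity on $\B(I)$ (because $\phi_I$ fixes $\B(I)$ and $E_I$ is the identity there), so its range is exactly $\B(I)$; it is idempotent, since for $y:=E_I(\phi_I(x))\in\B(I)$ one has $E_I(\phi_I(y))=E_I(y)=y$; and it is vacuum-preserving, because $\omega_I\circ E_I=\omega_I$ and $\omega_I\circ\phi_I=\omega_I$. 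A unital positive idempotent with range $\B(I)$ is a conditional expectation onto $\B(I)$, so uniqueness forces $E_I\circ\phi_I=E_I$. Note that only the $\B(I)$-fixing property of $\phi_I$ is used here; its $\B(I)$-bimodularity, which also holds by Choi's multiplicative domain theorem exactly as in Lemma \ref{phiinUCPAisDiffcovar}, is not even needed for the idempotency step. Specializing to $\B=\Vir_\A$ yields the assertions for $E_{\Vir}$, completing both parts.
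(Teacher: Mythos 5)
Your proof is correct and follows essentially the same route as the paper: right absorption from the $\B(I)$-fixing property, then left absorption by showing $E_I\circ\phi_I$ is an idempotent (hence, by Tomiyama, a conditional expectation) onto $\B(I)$ preserving the vacuum state, and concluding by uniqueness of the vacuum-preserving conditional expectation — exactly the paper's argument, which likewise offers the Takesaki-uniqueness alternative alongside irreducibility. The only difference is cosmetic: you verify $E_I^\sharp=E_I$ and $E_I\circ E_I=E_I$ explicitly, which the paper treats as immediate.
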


\begin{proof}
We only need to show the absorption property. $\phi \circ E_{\Vir} = E_{\Vir}$ is immediate from the fact that $\phi$ fixes $\Vir_A$ by definition and $E_{\Vir}$ projects onto it. Using this and $E_{\Vir} = E_{\Vir} \circ E_{\Vir}$, we have $E_{\Vir} \circ \phi \circ E_{\Vir} \circ \phi = E_{\Vir} \circ \phi$, thus $E_{\Vir} \circ \phi = E_{\Vir}$ by irreducibility and uniqueness of the standard conditional expectation, or because $E_{\Vir} \circ \phi$ preserves the vacuum state as well.
\end{proof}

\begin{defi}
We call \textbf{Haar element} a self-involutive projection with the absorption property in a monoid with involution. When a Haar element exists, it is obviously necessarily unique.
\end{defi}

\begin{prop}\label{genGalois}
Let $\A$ be a conformal net. There is a bijective correspondence between:
\begin{enumerate}
\item[$(1)$] Irreducible conformal subnets $\C\subset\A$.
\item[$(2)$] Self-involutive projections in $\UCP(\A)$.
\item[$(3)$] Closed convex submonoids with involution of $\UCP(\A)$ with Haar element, 
that are maximal among all closed convex submonoids with involution with the same Haar element.
\end{enumerate}

The correspondence from $(1)$ to $(2)$ to $(3)$ is given by $\C \mapsto E_{\C} \mapsto \{\phi\in\UCP(\A): \phi\circ E_{\C} = E_\C = E_{\C}\circ \phi \}$ with Haar element $E_\C$, the standard conditional expectation of $\A$ onto $\C$. From $(1)$ to $(3)$, it also holds $\{\phi\in\UCP(\A): \phi\circ E_{\C} = E_\C = E_{\C}\circ \phi\} = \UCP(\A|\C)$.
\smallskip

More generally, let $\B\subset\A$ be an irreducible conformal inclusion. There is a bijective correspondence between intermediate conformal nets $\B\subset \C\subset\A$ in $(1)$, self-involutive projections in $\UCP(\A|\B)$ in $(2)$, and closed convex submonoids with involution of $\UCP(\A|\B)$ with Haar element, that are maximal among all closed convex submonoids with involution with the same Haar element in $(3)$.
\end{prop}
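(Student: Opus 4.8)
The plan is to prove the two displayed biconditions $(1)\leftrightarrow(2)$ and $(2)\leftrightarrow(3)$ and then transport everything to the relative setting. The heart of the matter is $(1)\leftrightarrow(2)$: I claim $\C\mapsto E_\C$ is a bijection from irreducible conformal subnets onto self-involutive projections in $\UCP(\A)$. The forward map is well defined, since by Remark \ref{stdcondexpandJonesproj} the standard conditional expectation $E_\C$ is compatible, vacuum preserving, and (by uniqueness of the standard conditional expectation) covariant, while irreducibility of $\C\subset\A$ is equivalent to $\Vir_\A\subset\C$ (Proposition \ref{irrediffintermediate}), so $E_\C$ fixes $\Vir_\A$ pointwise and hence lies in $\UCP(\A)$; that $E_\C=E_\C^\sharp=E_\C\circ E_\C$ is recorded in Lemma \ref{stdEisHaarelement}. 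Injectivity is clear, as $E_\C$ fixes exactly $\C$.

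For surjectivity --- the step I expect to be the main obstacle --- I would start from a self-involutive projection $\phi$ and show it is a standard conditional expectation. Writing $V_\phi$ for the implementing operator of Lemma \ref{Vphiglobal}, the composition rule $V_{\phi_1\circ\phi_2}=V_{\phi_1}V_{\phi_2}$ (immediate from \eqref{Vphi}) together with $V_{\phi^\sharp}=V_\phi^*$ turns the hypotheses $\phi=\phi\circ\phi$ and $\phi=\phi^\sharp$ into the statement that $V_\phi$ is a self-adjoint idempotent, i.e. an orthogonal projection fixing $\Omega$. The key local step is that the fixed-point algebra $\C(I):=\{x\in\A(I):\phi_I(x)=x\}$ is contained in the multiplicative domain of $\phi_I$: if $\phi_I(x)=x$, the Kadison--Schwarz inequality gives $\phi_I(x^*x)\geq\phi_I(x)^*\phi_I(x)=x^*x$, and applying the faithful vacuum state $\omega_I$ (which $\phi_I$ preserves) forces $\phi_I(x^*x)=x^*x=\phi_I(x)^*\phi_I(x)$, and symmetrically for $xx^*$. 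By Choi's multiplicative domain theorem \cite{Ch1974}, $\phi_I$ is then multiplicative and bimodular over $\C(I)$; hence $\C(I)$ is a von Neumann subalgebra and $\phi_I$ is a conditional expectation onto it. By Proposition \ref{subnetsfromaction} the family $\C=\A^{\{\phi\}}$ is an irreducible conformal subnet, and since $\phi$ is a normal faithful vacuum-preserving conditional expectation onto $\C$, uniqueness (Remark \ref{irreduniqueE}) yields $\phi=E_\C$. This simultaneously shows the two maps are mutually inverse.

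Next I would establish $(2)\leftrightarrow(3)$ by sending $E_\C$ to $M:=\{\phi\in\UCP(\A):\phi\circ E_\C=E_\C=E_\C\circ\phi\}$. First, $M=\UCP(\A|\C)$: the relation $\phi\circ E_\C=E_\C$ says precisely that $\phi$ fixes the range $\C$ of $E_\C$ pointwise, and conversely any $\C$-fixing $\phi$ absorbs $E_\C$ on both sides by Lemma \ref{stdEisHaarelement}. Thus $M$ is closed (it is cut out by the pointwise conditions $\phi_I\restriction_{\C(I)}=\id$, cf. the proof of Theorem \ref{fixedpointsB}), convex (composition and convex combination are affine, Definition \ref{compoadjointUCPA}), and stable under composition and under $\omega$-adjunction (apply $\sharp$ to $\phi\circ E_\C=E_\C$, using $E_\C^\sharp=E_\C$ and $(\phi_1\circ\phi_2)^\sharp=\phi_2^\sharp\circ\phi_1^\sharp$), with Haar element $E_\C$ by Lemma \ref{stdEisHaarelement}. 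For maximality, if $N$ is any closed convex submonoid with involution whose Haar element is the same $E_\C$, then by the defining absorption property every $\phi\in N$ satisfies $\phi\circ E_\C=E_\C=E_\C\circ\phi$, so $N\subseteq M$; hence $M$ is the maximal such set. Conversely, an object of $(3)$ has a Haar element that is a self-involutive projection in $\UCP(\A)$, hence equals some $E_\C$ by $(1)\leftrightarrow(2)$, and the inclusion just established forces the submonoid to equal $M$. Thus ``take the Haar element'' inverts $(2)\to(3)$.

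Finally, for the relative statement I would rerun the three identifications inside $\UCP(\A|\B)$ in place of $\UCP(\A)$, which is itself a closed convex monoid with involution, so that closedness, convexity, the submonoid and involution properties are all inherited. A self-involutive projection in $\UCP(\A|\B)$ is a self-involutive projection $\phi=E_\C$ that additionally fixes $\B$; since $E_\C$ fixes exactly $\C$, this is equivalent to $\B\subseteq\C$, i.e. to $\C$ being intermediate $\B\subset\C\subset\A$. The submonoid arguments are verbatim the same with $E_\B$ as the ambient Haar element (Lemma \ref{stdEisHaarelement}). The only points requiring a little care are the closedness of $M$ and the stability of $\UCP(\A|\C)$ under $\omega$-adjunction, both of which reduce to the $\sharp$-compatibility of the absorption relations together with the compactness results of Section \ref{QuOponConfNets}.
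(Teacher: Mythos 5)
Your proof is correct and takes essentially the same route as the paper's: for $(1)\leftrightarrow(2)$ you use Proposition \ref{subnetsfromaction} together with the uniqueness of the normal faithful conditional expectation for irreducible inclusions (Remark \ref{irreduniqueE}), and for $(2)\leftrightarrow(3)$ you use the absorption set $\{\phi\in\UCP(\A): \phi\circ E_{\C} = E_\C = E_{\C}\circ \phi \}$ with Haar element $E_\C$ and its identification with $\UCP(\A|\C)$ via Lemma \ref{stdEisHaarelement}, exactly as in the paper. The only difference is that you fill in details the paper leaves terse (the Kadison--Schwarz/Choi multiplicative-domain argument showing that a vacuum-preserving ucp idempotent is a conditional expectation onto its fixed-point algebra, and the explicit maximality/mutual-inverse argument), which is an elaboration of the same proof rather than a different approach.
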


\begin{proof}
We only prove the first statement, where $\B=\Vir_\A  \subset\A$. The second more general statement follows analogously.
First, the map $\C\mapsto E_{\C}$ is clearly injective since if $E_{\C_1}=E_{\C_2}$ then $\C_1=\C_2$ as the subnets are the fixed points of the respective conditional expectations. It is also surjective, since given $E=E^{\sharp}=E\circ E\in\UCP(\A)$, then $\A^{\{E\}}$ is an irreducible conformal subnet by Proposition \ref{subnetsfromaction}. Moreover, $E_I$ is a conditional expectation of $\A(I)$ onto its fixed point subalgebra $\A^{\{E\}}(I)$ by definition, for each $I\in\cI$, hence it is unique by irreducibility and it must coincide with $(E_{\A^{\{E\}}})_I$. 
Second, given $E=E^{\sharp}=E\circ E \in \UCP(\A)$, the map $E\mapsto \{\phi\in\UCP(\A): \phi\circ E=E=E\circ \phi\}$ is injective since if $E_1, E_2$ as above give rise to the same subset, to which they both belong, then in particular $E_1=E_2\circ E_1=E_2$. Moreover, $E$ is by definition a Haar element for $\{\phi\in\UCP(\A): \phi\circ E=E=E\circ \phi\}$. This second map is obviously surjective onto closed convex submonoids with involution of $\UCP(\A)$ with Haar element, which are maximal among closed convex submonoids with involution with the same Haar element.
Lastly, the inclusion $\{\phi\in\UCP(\A): \phi\circ E_{\C} = E_\C = E_{\C}\circ \phi\} \subset \UCP(\A|\C)$ is immediate, while the opposite inclusion follows from Lemma \ref{stdEisHaarelement} with $\C$ in place of $\B$.
\end{proof}

The following is our second main result. In particular, the second statement in Theorem \ref{Galois} below generalizes a result of Longo, \cite[Theorem 21]{Lo2003}, from finite index (Definition \ref{finindexsubnets}) to irreducible discrete conformal subnets (Definition \ref{discretesubnets}):

\begin{thm}\label{Galois}
Let $\A$ be a conformal net and let $\B\subset\A$ be a conformal subnet such that the inclusion is irreducible and discrete. There is a bijective correspondence between:
\begin{align}
\left\{ \C \subset \A \,|\, \text{conformal subnet with }\B\subset\C \right\} \longleftrightarrow \left\{ K\subset \QuOp(\A|\B)\,|\, K \text{ closed subhypergroup}\right\}
\end{align}
The correspondence is given by $\C\mapsto K_{\C} := \QuOp(\A | \C)$ and $K\mapsto \A^K$.
\smallskip

Furthermore, for fixed $I\in\cI$, there is a bijective correspondence between intermediate von Neumann algebras $\B(I)\subset \N\subset \A(I)$ and intermediate conformal nets $\B \subset\C \subset \A$ such that $\C(I) = \N$.
\end{thm}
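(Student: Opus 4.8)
The plan is to reduce the whole statement to a single interval through Theorem \ref{discretecase} and then import the Galois correspondence for irreducible discrete local type $\III$ subfactors from \cite{BiDeGi2022-AHP}. Fix $I\in\cI$. By Theorem \ref{discretecase} the restriction $\phi\mapsto\phi_I$ is an affine homeomorphism $\UCP(\A|\B)\cong\UCP(\A(I)|\B(I))$, and since it is computed componentwise it intertwines composition with composition and $\omega$-adjunction with $\omega_I$-adjunction; hence, by Theorem \ref{QuOpHyper}, it carries $\QuOp(\A|\B)$ isomorphically, as a compact hypergroup, onto $K(\B(I)\subset\A(I)):=\Extr(\UCP(\A(I)|\B(I)))$. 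Throughout, conformal covariance (Definition \ref{confsubnetdef}(ii) and Definition \ref{UCPA}(iii)), together with transitivity of $\PSL(2,\RR)$ on $\cI$, lets one test every ``for all $J$'' condition at the single interval $I$. All the correspondences in the theorem will then be read off from one subfactor statement: intermediate von Neumann algebras $\B(I)\subset\N\subset\A(I)$ are in bijection with closed subhypergroups $K_0\subset K(\B(I)\subset\A(I))$ via $\N\mapsto\{\psi:\psi|_\N=\id\}$ and $K_0\mapsto\A(I)^{K_0}$.

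I would establish the \emph{furthermore} statement first, as it is the geometric bridge. The map $\C\mapsto\C(I)$ lands among intermediate von Neumann algebras $\B(I)\subset\C(I)\subset\A(I)$, and $\C(I)$ is a factor since $\C(I)'\cap\C(I)\subset\B(I)'\cap\A(I)=\CC 1$ by irreducibility. It is injective because Definition \ref{confsubnetdef}(ii) forces $\C(J)=\Ad U(g)\,\C(I)$ whenever $gI=J$. For surjectivity, given $\N$ I would use the subfactor correspondence to write $\N=\A(I)^{K_0}$, transport $K_0$ through the isomorphism of the first paragraph to a closed subhypergroup $K\subset\QuOp(\A|\B)$ (so that $\{\phi_I:\phi\in K\}=K_0$), and set $\C:=\A^K$. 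By Proposition \ref{subnetsfromaction} this is an irreducible conformal subnet; it contains $\B$ because every element of $K$ fixes $\B$; and $\C(I)=\{x\in\A(I):\psi(x)=x\ \forall\psi\in K_0\}=\A(I)^{K_0}=\N$. Together with injectivity this yields the bijection $\C\leftrightarrow\C(I)$.

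For the first statement I would verify the two maps are mutually inverse. Given intermediate $\B\subset\C\subset\A$, note $\Vir_\A\subset\B\subset\C$, so $\C\subset\A$ is irreducible by Proposition \ref{irrediffintermediate}; by the covariance reduction, $\QuOp(\A|\C)=\{\phi\in\QuOp(\A|\B):\phi_I|_{\C(I)}=\id\}$ corresponds under the first-paragraph isomorphism to the $\C(I)$-fixing part of $K(\B(I)\subset\A(I))$, which is the closed subhypergroup attached to $\C(I)$; hence $\C\mapsto\QuOp(\A|\C)$ is well defined with closed subhypergroup values. The identity $\A^{\QuOp(\A|\C)}=\C$ is exactly the relative fixed-point Theorem \ref{fixedpointsB} for $\C\subset\A$. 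Conversely, for a closed subhypergroup $K$ with interval image $K_0$ one has $\A^K(I)=\A(I)^{K_0}$, so $\QuOp(\A|\A^K)$ corresponds to the $\A(I)^{K_0}$-fixing part of $K(\B(I)\subset\A(I))$, which is $K_0$ again by the subfactor correspondence; thus $\QuOp(\A|\A^K)=K$. This closes the bijection $\C\leftrightarrow K$ and makes all the correspondences commute: $\C(I)=\N=\A(I)^{K_0}$, $\A^K=\C$, $\QuOp(\A|\C)\cong K_0$.

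The main obstacle is the surjectivity in the \emph{furthermore} part: promoting a single intermediate algebra $\N\subset\A(I)$, defined at one interval, to a covariantly compatible family $\{\C(J)\}_{J\in\cI}$. The naive recipe $\C(J):=\Ad U(g)\,\N$ is only well defined if $\N$ is invariant under the modular flow $\Ad U(\delta_I(t))$ of $\A(I)$, i.e.\ (by the Bisognano--Wichmann and Takesaki theorems) if $\N$ admits a vacuum-preserving conditional expectation from $\A(I)$; this is precisely what the Haar measure of $K_0$ supplies once one knows $\N=\A(I)^{K_0}$. Routing the construction through $\A^K$ and Theorem \ref{discretecase} sidesteps the explicit covariance and compatibility checks, so the genuine analytic content---extending a single-interval unital completely positive datum to the whole net---has already been absorbed into Theorem \ref{discretecase}, with discreteness of $\B\subset\A$ entering exactly there and in the subfactor Galois correspondence.
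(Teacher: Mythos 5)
Your proposal is correct and follows essentially the same route as the paper's proof: both reduce everything to a single interval via Theorem \ref{discretecase}, observe that the hypergroup operations are defined locally (Theorem \ref{QuOpHyper}) so the restriction homeomorphism is a hypergroup isomorphism intertwining the actions, import the Galois correspondence for irreducible discrete local type $\III$ subfactors from \cite{BiDeGi2022-AHP}, and realize intermediate nets as fixed points $\A^K$ via Proposition \ref{subnetsfromaction} and Theorem \ref{fixedpointsB}. The paper likewise proves the ``furthermore'' statement first and then obtains the main bijection by composing $\C\mapsto\C(I)$ with $\C(I)\mapsto\Extr(\UCP(\A(I)|\C(I)))$, exactly as you do.
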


\begin{proof}
We first show the second statement. Let $\B(I)\subset \N\subset \A(I)$ be an intermediate von Neumann algebra. By \cite[Theorem 4.5]{BiDeGi2022}, $\N\subset \A(I)$ is an irreducible discrete local type $\III$ subfactor, and thus by \cite[Theorem 5.2]{BiDeGi2022}, $\Extr(\UCP(\A(I)|\N))$ (therein denoted by $K(\N\subset \A(I))$) 
is a closed subhypergroup of $\Extr(\UCP(\A(I)|\B(I)))$. The latter is homeomorphic to $\QuOp(\A|\B)$ by Theorem \ref{discretecase}, where the hard part is to extend maps on a single local algebra $\A(I)$ to a compatible family of maps on the whole net $\A$. Moreover, as the hypergroup operations are defined on each local algebra by Theorem \ref{QuOpHyper} and \cite[Theorem 4.51]{BiDeGi2021}, the homeomorphism between $\Extr(\UCP(\A(I)|\B(I)))$ and $\QuOp(\A|\B)$ is an isomorphism of compact hypergroups which intertwines the respective actions on $\A(I)$ and $\A$. In particular, there is a copy, denoted by $K$, of $\Extr(\UCP(\A(I)|\N))$ inside $\QuOp(\A|\B)$. By Proposition \ref{subnetsfromaction} and Theorem \ref{fixedpointsB}, $\C := \A^K$ is an intermediate conformal net and $\C(I) = \N$ by \cite[Theorem 5.7]{BiDeGi2021} as desired. With this definition of $\C$, we have that $K = \QuOp(\A|\C)$ is a closed subhypergroup of $\QuOp(\A | \B)$. 

Since the map $\C\mapsto \C(I)$ between intermediate conformal nets and intermediate von Neumann algebras is bijective by conformal covariance, which is fixed by $\A$, and since the map $\C(I)\mapsto \Extr(\UCP(\A(I)|\C(I)))$ between intermediate von Neumann algebras and closed subhypergroups of $\Extr(\UCP(\A(I)|\B(I)))$ is bijective by \cite[Theorem 5.2]{BiDeGi2022}, we have that the map $\C\mapsto K_{\C} := \QuOp(\A|\C)$, contained in $\QuOp(\A|\B)$, is also bijective. Thus the proof is complete.
\end{proof}

\begin{rmk}
An alternative proof of the second statement in Theorem \ref{Galois}, more in line with the proof of \cite[Theorem 21]{Lo2003}, goes as follows. The idea is to view $\N$ as being generated by $\B(I)$ and by a subset of a Pimsner--Popa basis of global charged fields for $\A$ over $\B$, and to establish the M\"obius covariance of the latter using Connes cocycles. This alternative proof, which we don't report here, makes use of results in \cite[Section 3]{IzLoPo1998}, \cite[Section 1 and 2]{Lon97}, \cite[Section 6]{DeGi2018}. Here we prefer to stick to the idea of viewing $\N$ as fixed point subalgebra of $\A(I)$ under a subset of quantum operations in $\QuOp(\A|\B)$, and to use the results of Section \ref{discrete}. Both arguments use the discreteness assumption on $\B\subset\A$.
\end{rmk}

We now turn to the finite index case (Definition \ref{finindexsubnets}). Note that if a conformal net $\A$ has central charge $c<1$, these nets are classified in \cite{KaLo2004}, then $\Vir_\A$ is \emph{completely rational} in the sense of \cite{KaLoMg2001}. Hence $\Vir_\A \subset \A$ has finite index, $\A$ is completely rational, and $\QuOp(\A)$ is a finite hypergroup, see Theorem \ref{Galoisfinindex} below. However, this is no longer the case even if $\A$ is completely rational with central charge $c\geq 1$, as the examples $\A = L\SU(2)_1$ or $\A_N = L\U(1)_{2N}$, with $N \geq 2$, mentioned at the end of Section \ref{SecRelQuOp} already show. Both families of examples have central charge $c=1$. In the first, $\QuOp(\A) = \Aut(\A) \cong SO(3)$. In the second, if $N=k^2$ for some integer $k$, then $\QuOp(\A_N) \cong \SO(3)\CS\ZZ_k$ (the double coset compact hypergroup associated with $\ZZ_k \subset \SO(3)$, see \cite[Theorem 1.1.9]{BlHe1995} and \cite[Section 9.2]{BiDeGi2021}). If $N$ is not a perfect square, then $\QuOp(\A_N) \supsetneq \Aut(\A_N) \cong D_\infty$ (the infinite dihedral group).
In all these cases, $\A$ is completely rational and $\QuOp(\A)$ is an infinite set. For a conformal net $\A$, the inclusion $\Vir_\A\subset\A$ neither has finite index nor it is discrete in general \cite{Fr1993}, \cite{Re1994}, \cite{Ca2003}, and we don't expect $\QuOp(\A)$ to be compact (nor locally compact) with the natural topology induced from $\UCP(\A)$, unless $\Vir_\A\subset\A$ is discrete. \Cf Remark \ref{beyonddiscrete}.
\medskip

The following is a restatement of the first main result in \cite{Bi2016}:

\begin{thm}[{\cite[Theorem 1.3, see also Theorem 3.8 and 4.22]{Bi2016}}]\label{Galoisfinindex}
Let $\A$ be a conformal net.
There is a bijective correspondence between: 
\begin{align}  
\left\{\B\subset \A \,|\, \text{conformal subnet with } [\A:\B]<\infty\right\} \longleftrightarrow \left\{ K\subset \QuOp(\A)\,|\, K \text{ finite hypergroup}\right\}
\end{align}
The correspondence is given by $\B\mapsto K_\B := \QuOp(\A | \B)$ and $K\mapsto \A^K$.
\end{thm}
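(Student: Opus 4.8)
The plan is to treat the two assignments $\B\mapsto K_\B:=\QuOp(\A|\B)$ and $K\mapsto\A^K$ separately, checking that each is well defined and that they are mutually inverse. The conceptual point to keep in mind is that $\QuOp(\A)=\QuOp(\A|\Vir_\A)$ is in general \emph{not} a hypergroup, since $\Vir_\A\subset\A$ need be neither discrete nor of finite index; hence one cannot simply specialise the discrete Galois correspondence of Theorem \ref{Galois} to $\B=\Vir_\A$. The remedy is to attach to each finite subhypergroup a genuine Bauer simplex built from its own fixed point net.

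First I would show that $\B\mapsto K_\B$ lands among finite subhypergroups of $\QuOp(\A)$. A finite index inclusion is automatically irreducible (the remark following Proposition \ref{irrediffintermediate}) and discrete, so Theorem \ref{QuOpHyper} applies and $\QuOp(\A|\B)$ is a compact hypergroup whose convolution and involution are induced by composition and $\omega$-adjunction in $\UCP(\A)$; as $\QuOp(\A|\B)\subset\QuOp(\A)$ this is a subhypergroup in the required sense. Finiteness comes from the finite index hypothesis: via Theorem \ref{discretecase} we have $\QuOp(\A|\B)\cong\Extr(\UCP(\A(I)|\B(I)))$, and the defining representation $\iota$ of $\B$ decomposes into finitely many irreducibles with finite multiplicities, so by the decomposition of $\iota$ recalled after Theorem \ref{QuOpHyper} the compact hypergroup $\QuOp(\A|\B)$ has only finitely many finite-dimensional irreducible representations and is therefore finite.

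Next I would show $K\mapsto\A^K$ lands among finite index subnets. By Proposition \ref{subnetsfromaction}, $\A^K$ is an irreducible conformal subnet. Its Haar element $E_K=\sum_{k\in K}w_k\,k\in\UCP(\A)$ is a self-involutive projection whose fixed point net is $\A^K$ (the fixed points of the averaging map coincide with the common fixed points of $K$, by a convexity estimate on the implementing contractions), so by Proposition \ref{genGalois} and Remark \ref{irreduniqueE} it is the standard conditional expectation $E_{\A^K}$. Since the identity element of $K$ is $\id_\A$ and carries strictly positive Haar weight $w_e$, discarding the remaining nonnegative terms yields the Pimsner--Popa bound $(E_K)_I(x)\geq w_e\,x$ for positive $x\in\A(I)$, whence $[\A(I):\A^K(I)]\leq w_e^{-1}<\infty$; as the index is independent of $I$, the subnet $\A^K\subset\A$ has finite index.

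It remains to verify that the two maps are inverse. The identity $\A^{K_\B}=\B$ is precisely Theorem \ref{fixedpointsB}. For $K_{\A^K}=K$, put $\C:=\A^K$, finite index by the previous step, so that $\QuOp(\A|\C)$ is a finite hypergroup and the inclusion $K\subseteq\QuOp(\A|\C)$ is clear. The Haar measure of $K$ and that of $\QuOp(\A|\C)$ are both probability measures supported on $\Extr(\UCP(\A|\C))=\QuOp(\A|\C)$ with the same barycenter, namely the common Haar element $E_\C=E_{\A^K}$. Since $\UCP(\A|\C)$ is a Bauer simplex (Theorem \ref{QuOpHyper}), the measure supported on the extreme points with a prescribed barycenter is unique, so the two Haar measures agree, and by faithfulness of the Haar measure their supports agree as well, giving $K=\QuOp(\A|\C)=K_{\A^K}$. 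I expect this last step to be the main obstacle: it is exactly where the lack of a global hypergroup structure on $\QuOp(\A)$ must be bypassed, which is achieved by relocating the problem into the Bauer simplex $\UCP(\A|\A^K)$ and invoking uniqueness of barycentric decompositions.
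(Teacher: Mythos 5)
Your proposal is correct, but note that the paper itself contains \emph{no proof} of Theorem \ref{Galoisfinindex}: it is imported verbatim from \cite{Bi2016}, which (as the introduction recalls) treats finite index inclusions of completely rational conformal nets by finite-index subfactor methods. So your argument is necessarily a different route: you derive the finite-index statement from the paper's own discrete-case machinery, and the derivation holds together. Your key structural observation is exactly right — since $\Vir_\A\subset\A$ is in general neither discrete nor of finite index, $\QuOp(\A)$ carries no global hypergroup structure, so Theorem \ref{Galois} cannot be invoked with base $\B=\Vir_\A$ — and your two workarounds are sound: building the Haar element $E_K=\sum_{k\in K}w_k\,k$ directly as a convex combination in $\UCP(\A)$, identifying its fixed points with $\A^K$ by strict convexity of the Hilbert norm on the implementing contractions, invoking Proposition \ref{genGalois} to get $E_K=E_{\A^K}$, and then extracting $[\A:\A^K]\leq w_e^{-1}<\infty$ from the Pimsner--Popa bound; and, for $K_{\A^K}=K$, using that both Haar measures are supported on $\Extr(\UCP(\A|\A^K))$ with barycenter $E_{\A^K}$, so that uniqueness of representing measures in the Bauer simplex (equivalently, the uniqueness clause in Theorem \ref{QuOpHyper}$(2)$) plus faithfulness of Haar measures forces $K=\QuOp(\A|\A^K)$. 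Three points should be made explicit rather than gestured at: (1) the finiteness of $\QuOp(\A|\B)$ requires a Peter--Weyl-type theorem for compact hypergroups (Vrem \cite{Vr1979}, \cite[Section 6]{BiDeGi2021}) to pass from ``finitely many continuous irreducible representations, all finite-dimensional'' to ``finite''; alternatively one can bypass representation theory by noting that $\phi\mapsto V_\phi$ embeds $\UCP(\A(I)|\B(I))$ affinely and injectively into the finite-dimensional relative commutant $\B(I)'\cap\A_1(I)$, so this Bauer simplex has finitely many extreme points; (2) to apply Theorems \ref{QuOpHyper}, \ref{discretecase} and \ref{fixedpointsB} to $\C=\A^K$ you need that finiteness of the local Jones/Kosaki index produced by Pimsner--Popa implies finite index in the sense of Definition \ref{finindexsubnets} (finitely many finite-dimensional summands of $\iota$), which is standard via \cite{LoRe1995} but used silently; (3) you should fix the convention that a ``finite hypergroup $K\subset\QuOp(\A)$'' carries the operations induced by composition, $\omega$-adjunction and the identity $\id_\A$ of $\UCP(\A)$, since your Haar-weight argument needs $\id_\A\in K$ with $w_e>0$. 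Compared with \cite{Bi2016}, your route buys a proof living entirely inside this paper's framework, exhibiting the finite-index theorem as a corollary of the discrete results; its cost is the reliance on hypergroup harmonic analysis (or the relative-commutant bound) for the finiteness step, which the original finite-index approach gets for free.
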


\bigskip
\noindent
{\bf Acknowledgements.}
We are indebted to Sebastiano Carpi for several comments on a previous version of the manuscript, which we incorporated in the proof of Proposition \ref{irrediffintermediate} and in Remark \ref{AutUCP}, and to Tiziano Gaudio for drawing our attention to an interesting class of conformal inclusions. We also thank them for stimulating discussions on the topics treated in this paper.

\bigskip

\def\cprime{$'$}\newcommand{\noopsort}[1]{}
\begin{bibdiv}
\begin{biblist}

\bib{AcCe1982}{article}{
      author={Accardi, Luigi},
      author={Cecchini, Carlo},
       title={Conditional expectations in von {N}eumann algebras and a theorem
  	of {T}akesaki},
        date={1982},
        ISSN={0022-1236},
     journal={J. Funct. Anal.},
      volume={45},
       pages={245\ndash 273},
         url={http://dx.doi.org/10.1016/0022-1236(82)90022-2},
}

\bib{AGG02}{article}{
      author={Arias, A.},
      author={Gheondea, A.},
      author={Gudder, S.},
       title={Fixed points of quantum operations},
        date={2002},
        ISSN={0022-2488},
     journal={J. Math. Phys.},
      volume={43},
       pages={5872\ndash 5881},
         url={https://doi.org/10.1063/1.1519669},
}

\bib{AlfBook}{book}{
      author={Alfsen, Erik~M.},
       title={Compact convex sets and boundary integrals},
      series={Ergebnisse der Mathematik und ihrer Grenzgebiete, Band 57},
   publisher={Springer-Verlag, New York-Heidelberg},
        date={1971},
         url={https://mathscinet.ams.org/mathscinet-getitem?mr=0445271},
}

\bib{Ar1969}{article}{
      author={Arveson, William~B.},
       title={Subalgebras of {$C^{\ast}$}-algebras},
        date={1969},
        ISSN={0001-5962},
     journal={Acta Math.},
      volume={123},
       pages={141\ndash 224},
         url={https://doi.org/10.1007/BF02392388},
}

\bib{AlfShuBook}{book}{
      author={Alfsen, Erik~M.},
      author={Shultz, Frederic~W.},
       title={State spaces of operator algebras},
      series={Mathematics: Theory \& Applications},
   publisher={Birkh\"{a}user Boston, Inc., Boston, MA},
        date={2001},
        ISBN={0-8176-3890-3},
         url={https://mathscinet.ams.org/mathscinet-getitem?mr=1828331},
        note={Basic theory, orientations, and $C^*$-products},
}

\bib{BaCaMu2016}{article}{
      author={Bannon, Jon~P.},
      author={Cameron, Jan},
      author={Mukherjee, Kunal},
       title={The modular symmetry of {M}arkov maps},
        date={2016},
        ISSN={0022-247X},
     journal={J. Math. Anal. Appl.},
      volume={439},
       pages={701\ndash 708},
         url={https://doi.org/10.1016/j.jmaa.2016.03.013},
}

\bib{BiDeGi2021}{article}{
      author={Bischoff, Marcel},
      author={Del~Vecchio, Simone},
      author={Giorgetti, Luca},
       title={Compact hypergroups from discrete subfactors},
        date={2021},
        ISSN={0022-1236},
     journal={J. Funct. Anal.},
      volume={281},
       pages={109004},
         url={https://mathscinet.ams.org/mathscinet-getitem?mr=4234861},
}

\bib{BiDeGi2022}{article}{
      author={Bischoff, Marcel},
      author={Del~Vecchio, Simone},
      author={Giorgetti, Luca},
       title={Galois correspondence and {F}ourier analysis on local discrete
  subfactors},
        date={2022},
        ISSN={1424-0637},
     journal={Ann. Henri Poincar\'{e}},
      volume={23},
       pages={2979\ndash 3020},
         url={https://doi.org/10.1007/s00023-022-01154-4},
}

\bib{BcEv1998-I}{article}{
      author={B{\"o}ckenhauer, Jens},
      author={Evans, David~E.},
       title={{Modular invariants, graphs and {$\alpha$}-induction for nets of
  subfactors. {I}}},
        date={1998},
        ISSN={0010-3616},
     journal={Comm. Math. Phys.},
      volume={197},
       pages={361\ndash 386},
}

\bib{BcEvKa1999}{article}{
      author={B{\"o}ckenhauer, Jens},
      author={Evans, David~E.},
      author={Kawahigashi, Yasuyuki},
       title={{On {$\alpha$}-induction, chiral generators and modular
  invariants for subfactors}},
        date={1999},
        ISSN={0010-3616},
     journal={Comm. Math. Phys.},
      volume={208},
       pages={429\ndash 487},
         url={http://dx.doi.org/10.1007/s002200050765},
}

\bib{BrGuLo1993}{article}{
      author={Brunetti, Romeo},
      author={Guido, Daniele},
      author={Longo, Roberto},
       title={{Modular structure and duality in conformal quantum field
  theory}},
        date={1993},
        ISSN={0010-3616},
     journal={Comm. Math. Phys.},
      volume={156},
       pages={201\ndash 219},
}

\bib{BlHe1995}{book}{
      author={Bloom, Walter~R.},
      author={Heyer, Herbert},
       title={Harmonic analysis of probability measures on hypergroups},
      series={de Gruyter Studies in Mathematics},
   publisher={Walter de Gruyter \& Co., Berlin},
        date={1995},
      volume={20},
        ISBN={3-11-012105-0},
         url={http://dx.doi.org/10.1515/9783110877595},
}

\bib{Bi2016}{article}{
      author={Bischoff, Marcel},
       title={Generalized orbifold construction for conformal nets},
        date={2017},
        ISSN={0129-055X},
     journal={Rev. Math. Phys.},
      volume={29},
       pages={1750002, 53},
         url={http://dx.doi.org/10.1142/S0129055X17500027},
}

\bib{Bi2019MFO}{inproceedings}{
      author={Bischoff, Marcel},
       title={Quantum operations on conformal nets},
        date={2019},
   booktitle={In {S}ubfactors and {A}pplications. {O}berwolfach {R}ep.},
      editor={Dietmar~Bisch, Vaughan F. R.~Jones, Terry J.~Gannon},
      editor={Kawahigashi, Yasuyuki},
      volume={16},
       pages={3080\ndash 3083},
        note={DOI:
  \href{https://doi.org/10.4171/OWR/2019/49}{10.4171/OWR/2019/49}},
}

\bib{BeKa1998}{book}{
      author={Berezansky, Yu.~M.},
      author={Kalyuzhnyi, A.~A.},
       title={Harmonic analysis in hypercomplex systems},
      series={Mathematics and its Applications},
   publisher={Kluwer Academic Publishers, Dordrecht},
        date={1998},
      volume={434},
        ISBN={0-7923-5029-4},
         url={https://doi.org/10.1007/978-94-017-1758-8},
}

\bib{BiKaLoRe2014-2}{book}{
      author={Bischoff, Marcel},
      author={Kawahigashi, Yasuyuki},
      author={Longo, Roberto},
      author={Rehren, Karl-Henning},
       title={Tensor categories and endomorphisms of von {N}eumann
  algebras---with applications to quantum field theory},
      series={Springer Briefs in Mathematical Physics},
   publisher={Springer, Cham},
        date={2015},
      volume={3},
        ISBN={978-3-319-14300-2; 978-3-319-14301-9},
         url={http://dx.doi.org/10.1007/978-3-319-14301-9},
}

\bib{BlaBook}{book}{
      author={Blackadar, B.},
       title={Operator algebras},
      series={Encyclopaedia of Mathematical Sciences},
   publisher={Springer-Verlag, Berlin},
        date={2006},
      volume={122},
        ISBN={978-3-540-28486-4; 3-540-28486-9},
         url={https://mathscinet.ams.org/mathscinet-getitem?mr=2188261},
        note={Theory of $C^*$-algebras and von Neumann algebras, Operator
  Algebras and Non-commutative Geometry, III},
}

\bib{BuMaTo1988}{article}{
      author={Buchholz, Detlev},
      author={Mack, G.},
      author={Todorov, Ivan},
       title={{The current algebra on the circle as a germ of local field
  theories}},
        date={1988},
     journal={Nucl. Phys., B, Proc. Suppl.},
      volume={5},
       pages={20\ndash 56},
}

\bib{BrRo1}{book}{
      author={Bratteli, Ola},
      author={Robinson, Derek~W.},
       title={Operator algebras and quantum statistical mechanics. 1},
     edition={Second},
      series={Texts and Monographs in Physics},
   publisher={Springer-Verlag, New York},
        date={1987},
        ISBN={0-387-17093-6},
         url={http://dx.doi.org/10.1007/978-3-662-02520-8},
        note={$C^\ast$- and $W^\ast$-algebras, symmetry groups, decomposition
  of states},
}

\bib{Ca2003}{article}{
      author={Carpi, Sebastiano},
       title={The {V}irasoro algebra and sectors with infinite statistical
  dimension},
        date={2003},
        ISSN={1424-0637},
     journal={Ann. Henri Poincar{\'e}},
      volume={4},
       pages={601\ndash 611},
         url={http://dx.doi.org/10.1007/s00023-003-0140-x},
}

\bib{Ca2004}{article}{
      author={Carpi, Sebastiano},
       title={{On the representation theory of {V}irasoro nets}},
        date={2004},
        ISSN={0010-3616},
     journal={Comm. Math. Phys.},
      volume={244},
       pages={261\ndash 284},
         url={http://dx.doi.org/10.1007/s00220-003-0988-0},
}

\bib{Ca1998}{article}{
      author={Carpi, Sebastiano},
       title={Absence of subsystems for the {H}aag-{K}astler net generated by
  the energy-momentum tensor in two-dimensional conformal field theory},
        date={1998},
        ISSN={0377-9017},
     journal={Lett. Math. Phys.},
      volume={45},
       pages={259\ndash 267},
         url={http://dx.doi.org/10.1023/A:1007466420114},
}

\bib{Ca1999}{article}{
      author={Carpi, Sebastiano},
       title={Classification of subsystems for the {H}aag-{K}astler nets
  generated by {$c=1$} chiral current algebras},
        date={1999},
        ISSN={0377-9017},
     journal={Lett. Math. Phys.},
      volume={47},
       pages={353\ndash 364},
         url={https://doi.org/10.1023/A:1007517131143},
}

\bib{CaCo2001}{article}{
      author={Carpi, Sebastiano},
      author={Conti, Roberto},
       title={Classification of subsystems for local nets with trivial
  superselection structure},
        date={2001},
        ISSN={0010-3616},
     journal={Comm. Math. Phys.},
      volume={217},
       pages={89\ndash 106},
         url={http://dx.doi.org/10.1007/PL00005550},
}

\bib{CaCo2001Siena}{incollection}{
      author={Carpi, Sebastiano},
      author={Conti, Roberto},
       title={Classification of subsystems, local symmetry generators and
  intrinsic definition of local observables},
        date={2001},
   booktitle={Mathematical physics in mathematics and physics ({S}iena, 2000)},
      series={Fields Inst. Commun.},
      volume={30},
   publisher={Amer. Math. Soc., Providence, RI},
       pages={83\ndash 103},
         url={https://doi.org/10.1007/pl00005550},
}

\bib{CoDe1975}{article}{
      author={Combes, Fran\c{c}ois},
      author={Delaroche, Claire},
       title={Groupe modulaire d'une esp{\'e}rance conditionnele dans une
  alg{\`e}bre de von {N}eumann},
        date={1975},
        ISSN={0037-9484},
     journal={Bull. Soc. Math. France},
      volume={103},
       pages={385\ndash 426},
         url={http://www.numdam.org/item?id=BSMF_1975__103__385_0},
}

\bib{CaGaHi2019}{article}{
      author={Carpi, Sebastiano},
      author={Gaudio, Tiziano},
      author={Hillier, Robin},
       title={Classification of unitary vertex subalgebras and conformal
  subnets for rank-one lattice chiral {CFT} models},
        date={2019},
        ISSN={0022-2488},
     journal={J. Math. Phys.},
      volume={60},
       pages={093505, 20},
         url={https://mathscinet.ams.org/mathscinet-getitem?mr=4009419},
}

\bib{Ch1974}{article}{
      author={Choi, Man~Duen},
       title={A {S}chwarz inequality for positive linear maps on
  {$C^{\ast}$}-algebras},
        date={1974},
        ISSN={0019-2082},
     journal={Illinois J. Math.},
      volume={18},
       pages={565\ndash 574},
}

\bib{CaKaLo2010}{article}{
      author={Carpi, Sebastiano},
      author={Kawahigashi, Yasuyuki},
      author={Longo, Roberto},
       title={On the {J}ones index values for conformal subnets},
        date={2010},
        ISSN={0377-9017},
     journal={Lett. Math. Phys.},
      volume={92},
       pages={99\ndash 108},
         url={https://doi.org/10.1007/s11005-010-0384-6},
}

\bib{CaKaLoWe2018}{article}{
      author={Carpi, Sebastiano},
      author={Kawahigashi, Yasuyuki},
      author={Longo, Roberto},
      author={Weiner, Mih\'{a}ly},
       title={From vertex operator algebras to conformal nets and back},
        date={2018},
        ISSN={0065-9266},
     journal={Mem. Amer. Math. Soc.},
      volume={254},
         url={https://mathscinet.ams.org/mathscinet-getitem?mr=3796433},
}

\bib{Co1973}{article}{
      author={Connes, Alain},
       title={{Une classification des facteurs de type {${I\!I\!I}$}}},
        date={1973},
     journal={Ann. Sci. {\'E}cole Norm. Sup.(4)},
      volume={6},
       pages={133\ndash 252},
}

\bib{CaWe2005}{article}{
      author={Carpi, Sebastiano},
      author={Weiner, Mih{\'a}ly},
       title={{On the uniqueness of diffeomorphism symmetry in conformal field
  theory}},
        date={2005},
        ISSN={0010-3616},
     journal={Comm. Math. Phys.},
      volume={258},
       pages={203\ndash 221},
         url={http://dx.doi.org/10.1007/s00220-005-1335-4},
}

\bib{DoHaRo1969I}{article}{
      author={Doplicher, Sergio},
      author={Haag, Rudolf},
      author={Roberts, John~E.},
       title={{Fields, observables and gauge transformations. I}},
        date={1969},
        ISSN={0010-3616},
     journal={Comm. Math. Phys.},
      volume={13},
       pages={1\ndash 23},
}

\bib{DoHaRo1971}{article}{
      author={Doplicher, Sergio},
      author={Haag, Rudolf},
      author={Roberts, John~E.},
       title={Local observables and particle statistics. {I}},
        date={1971},
        ISSN={0010-3616},
     journal={Comm. Math. Phys.},
      volume={23},
       pages={199\ndash 230},
}

\bib{DoHaRo1974}{article}{
      author={Doplicher, Sergio},
      author={Haag, Rudolf},
      author={Roberts, John~E.},
       title={Local observables and particle statistics. {II}},
        date={1974},
        ISSN={0010-3616},
     journal={Comm. Math. Phys.},
      volume={35},
       pages={49\ndash 85},
}

\bib{DoLo1984}{article}{
      author={Doplicher, S.},
      author={Longo, R.},
       title={{Standard and split inclusions of von {N}eumann algebras}},
        date={1984},
        ISSN={0020-9910},
     journal={Invent. Math.},
      volume={75},
       pages={493\ndash 536},
         url={http://dx.doi.org/10.1007/BF01388641},
}

\bib{DLR2001}{article}{
      author={D'Antoni, Claudio},
      author={Longo, Roberto},
      author={Radulescu, F.},
       title={{Conformal nets, maximal temperature and models from free
  probability}},
        date={2001},
     journal={J. Oper. Theory},
      volume={45},
       pages={195\ndash 208},
}

\bib{DoRo1990}{article}{
      author={Doplicher, Sergio},
      author={Roberts, John~E.},
       title={Why there is a field algebra with a compact gauge group
  describing the superselection structure in particle physics},
        date={1990},
        ISSN={0010-3616},
     journal={Comm. Math. Phys.},
      volume={131},
       pages={51\ndash 107},
         url={http://projecteuclid.org/euclid.cmp/1104200703},
}

\bib{DeGi2018}{article}{
      author={Del~Vecchio, Simone},
      author={Giorgetti, Luca},
       title={Infinite index extensions of local nets and defects},
        date={2018},
        ISSN={0129-055X},
     journal={Rev. Math. Phys.},
      volume={30},
       pages={1850002, 58},
         url={https://doi.org/10.1142/S0129055X18500022},
}

\bib{DVIT20}{article}{
      author={Del~Vecchio, Simone},
      author={Iovieno, Stefano},
      author={Tanimoto, Yoh},
       title={Solitons and nonsmooth diffeomorphisms in conformal nets},
        date={2020},
        ISSN={0010-3616},
     journal={Comm. Math. Phys.},
      volume={375},
       pages={391\ndash 427},
         url={https://mathscinet.ams.org/mathscinet-getitem?mr=4082169},
}

\bib{CDIT2021}{article}{
      author={Carpi, Sebastiano},
      author={Del~Vecchio, Simone},
      author={Iovieno, Stefano},
      author={Tanimoto, Yoh},
       title={Positive energy representations of {S}obolev diffeomorphism
  groups of the circle},
        date={2021},
        ISSN={1664-2368},
     journal={Anal. Math. Phys.},
      volume={11},
       pages={Paper No. 12, 36},
         url={https://doi.org/10.1007/s13324-020-00429-5},
}

\bib{EGNO15}{book}{
      author={Etingof, Pavel},
      author={Gelaki, Shlomo},
      author={Nikshych, Dmitri},
      author={Ostrik, Victor},
       title={Tensor categories},
      series={Mathematical Surveys and Monographs},
   publisher={American Mathematical Society, Providence, RI},
        date={2015},
      volume={205},
        ISBN={978-1-4704-2024-6},
}

\bib{FeHo2005}{article}{
      author={Fewster, Christopher~J.},
      author={Hollands, Stefan},
       title={Quantum energy inequalities in two-dimensional conformal field
  theory},
        date={2005},
        ISSN={0129-055X},
     journal={Rev. Math. Phys.},
      volume={17},
       pages={577\ndash 612},
         url={https://doi.org/10.1142/S0129055X05002406},
}

\bib{FrJr1996}{article}{
      author={Fredenhagen, K.},
      author={J{\"o}r{\ss}, Martin},
       title={{Conformal Haag-Kastler nets, pointlike localized fields and the
  existence of operator product expansions}},
        date={1996},
     journal={Comm. Math. Phys.},
      volume={176},
       pages={541\ndash 554},
}

\bib{FrQiSh1985}{incollection}{
      author={Friedan, Daniel},
      author={Qiu, Zongan},
      author={Shenker, Stephen},
       title={Conformal invariance, unitarity and two-dimensional critical
  exponents},
        date={1985},
   booktitle={Vertex operators in mathematics and physics ({B}erkeley,
  {C}alif., 1983)},
      series={Math. Sci. Res. Inst. Publ.},
      volume={3},
   publisher={Springer, New York},
       pages={419\ndash 449},
         url={https://mathscinet.ams.org/mathscinet-getitem?mr=781390},
}

\bib{Fr1993}{incollection}{
      author={Fredenhagen, Klaus},
       title={Superselection sectors with infinite statistical dimension},
        date={1994},
   booktitle={Subfactors ({K}yuzeso, 1993)},
   publisher={World Sci. Publ., River Edge, NJ},
       pages={242\ndash 258},
}

\bib{FrReSc1989}{article}{
      author={Fredenhagen, K.},
      author={Rehren, K.-H.},
      author={Schroer, B.},
       title={{Superselection sectors with braid group statistics and exchange
  algebras. {I}.\ {G}eneral theory}},
        date={1989},
        ISSN={0010-3616},
     journal={Comm. Math. Phys.},
      volume={125},
       pages={201\ndash 226},
         url={http://projecteuclid.org/getRecord?id=euclid.cmp/1104179464},
}

\bib{FrReSc1992}{article}{
      author={Fredenhagen, Klaus},
      author={Rehren, Karl-Henning},
      author={Schroer, Bert},
       title={Superselection sectors with braid group statistics and exchange
  algebras. {II}. {G}eometric aspects and conformal covariance},
        date={1992},
        ISSN={0129-055X},
     journal={Rev. Math. Phys.},
      pages={113\ndash 157},
         url={http://dx.doi.org/10.1142/S0129055X92000170},
        note={SI1 (Special issue)}, 
}

\bib{GaFr1993}{article}{
      author={Gabbiani, Fabrizio},
      author={Fr{\"o}hlich, J{\"u}rg},
       title={{Operator algebras and conformal field theory}},
        date={1993},
        ISSN={0010-3616},
     journal={Comm. Math. Phys.},
      volume={155},
       pages={569\ndash 640},
}

\bib{Gi2022}{article}{
      author={Giorgetti, Luca},
       title={A planar algebraic description of conditional expectations},
        date={2022},
        ISSN={0129-167X},
     journal={Internat. J. Math.},
      volume={33},
       pages={Paper No. 2250037, 23},
         url={https://doi.org/10.1142/S0129167X22500379},
}

\bib{GoKeOl1986}{article}{
      author={Goddard, P.},
      author={Kent, A.},
      author={Olive, D.},
       title={{Unitary representations of the {V}irasoro and super-{V}irasoro
  algebras}},
        date={1986},
        ISSN={0010-3616},
     journal={Comm. Math. Phys.},
      volume={103},
       pages={105\ndash 119},
         url={http://projecteuclid.org/getRecord?id=euclid.cmp/1104114626},
}

\bib{GiLo2019}{article}{
      author={Giorgetti, Luca},
      author={Longo, Roberto},
       title={Minimal index and dimension for 2-{$C^*$}-categories with
  finite-dimensional centers},
        date={2019},
        ISSN={0010-3616},
     journal={Comm. Math. Phys.},
      volume={370},
       pages={719\ndash 757},
         url={https://doi.org/10.1007/s00220-018-3266-x},
}

\bib{GuLo1996}{article}{
      author={Guido, Daniele},
      author={Longo, Roberto},
       title={The conformal spin and statistics theorem},
        date={1996},
        ISSN={0010-3616},
     journal={Comm. Math. Phys.},
      volume={181},
       pages={11\ndash 35},
         url={http://projecteuclid.org/euclid.cmp/1104287623},
}

\bib{GuLoWi1998}{article}{
      author={Guido, Daniele},
      author={Longo, Roberto},
      author={Wiesbrock, Hans-Werner},
       title={{Extensions of Conformal Nets and Superselection Structures}},
        date={1998},
     journal={Comm. Math. Phys.},
      volume={192},
       pages={217\ndash 244},
}


\bib{GPRR23}{article}{
      author={Giorgetti, Luca},
      author={Parzygnat, Arthur~J.},
      author={Ranallo, Alessio},
      author={Russo, Benjamin~P.},
      title={{B}ayesian inversion and the {T}omita--{T}akesaki modular group},
      date={2023},
      journal = {The Quarterly Journal of Mathematics},
      note={\href{https://doi.org/10.1093/qmath/haad014}{haad014, https://doi.org/10.1093/qmath/haad014}},
}

\bib{GiRe2018}{article}{
      author={Giorgetti, Luca},
      author={Rehren, Karl-Henning},
       title={Braided categories of endomorphisms as invariants for local
  quantum field theories},
        date={2018},
        ISSN={0010-3616},
     journal={Comm. Math. Phys.},
      volume={357},
       pages={3\ndash 41},
         url={https://doi.org/10.1007/s00220-017-2937-3},
}

\bib{GiYu2019}{article}{
      author={Giorgetti, Luca},
      author={Yuan, Wei},
       title={Realization of rigid {$C^\ast$}-tensor categories via {T}omita
  bimodules},
        date={2019},
        ISSN={0379-4024},
     journal={J. Operator Theory},
      volume={81},
       pages={433\ndash 479},
}

\bib{Ha}{book}{
      author={Haag, Rudolf},
       title={{Local quantum physics}},
   publisher={Springer Berlin},
        date={1996},
}

\bib{IzLoPo1998}{article}{
      author={Izumi, Masaki},
      author={Longo, Roberto},
      author={Popa, Sorin},
       title={{A {G}alois correspondence for compact groups of automorphisms of
  von {N}eumann algebras with a generalization to {K}ac algebras}},
        date={1998},
        ISSN={0022-1236},
     journal={J. Funct. Anal.},
      volume={155},
       pages={25\ndash 63},
         url={http://dx.doi.org/10.1006/jfan.1997.3228},
}

\bib{Jo1983}{article}{
      author={Jones, V. F.~R.},
       title={{Index for subfactors}},
        date={1983},
        ISSN={0020-9910},
     journal={Invent. Math.},
      volume={72},
       pages={1\ndash 25},
         url={http://dx.doi.org/10.1007/BF01389127},
}

\bib{Koe2004}{article}{
      author={K\"{o}ster, S.},
       title={Local nature of coset models},
        date={2004},
        ISSN={0129-055X},
     journal={Rev. Math. Phys.},
      volume={16},
       pages={353\ndash 382},
         url={https://doi.org/10.1142/S0129055X0400200X},
}

\bib{Ka1952}{article}{
      author={Kadison, Richard~V.},
       title={A generalized {S}chwarz inequality and algebraic invariants for
  operator algebras},
        date={1952},
        ISSN={0003-486X},
     journal={Ann. of Math. (2)},
      volume={56},
       pages={494\ndash 503},
}

\bib{KaLo2004}{article}{
      author={Kawahigashi, Y.},
      author={Longo, Roberto},
       title={{Classification of local conformal nets. Case {$c < 1$}.}},
        date={2004},
        ISSN={0003-486X},
     journal={Ann. Math.},
      volume={160},
       pages={493\ndash 522},
}

\bib{KaLoMg2001}{article}{
      author={Kawahigashi, Y.},
      author={Longo, Roberto},
      author={M{\"u}ger, Michael},
       title={{Multi-Interval Subfactors and Modularity of Representations in
  Conformal Field Theory}},
        date={2001},
     journal={Comm. Math. Phys.},
      volume={219},
       pages={631\ndash 669},
}

\bib{KaRaBombay}{book}{
      author={Kac, V.~G.},
      author={Raina, A.~K.},
       title={Bombay lectures on highest weight representations of
  infinite-dimensional {L}ie algebras},
      series={Advanced Series in Mathematical Physics},
   publisher={World Scientific Publishing Co., Inc., Teaneck, NJ},
        date={1987},
      volume={2},
        ISBN={9971-50-395-6; 9971-50-396-4},
         url={https://mathscinet.ams.org/mathscinet-getitem?mr=1021978},
}

\bib{Lo2003}{article}{
      author={Longo, Roberto},
       title={{Conformal Subnets and Intermediate Subfactors}},
        date={2003},
        ISSN={0010-3616},
     journal={Comm. Math. Phys.},
      volume={237},
       pages={7\ndash 30},
         url={http://dx.doi.org/10.1007/s00220-003-0814-8},
}

\bib{Lo2}{article}{
      author={Longo, Roberto},
       title={{Lecture Notes on Conformal Nets}},
        date={2008},
      eprint={https://www.mat.uniroma2.it/longo/lecture-notes.html},
         url={https://www.mat.uniroma2.it/longo/lecture-notes.html},
        note={first part published in
  {\protect\NoHyper\cite{Lon08}\protect\endNoHyper}},
}

\bib{Lon08}{incollection}{
      author={Longo, Roberto},
       title={{Real {H}ilbert subspaces, modular theory, {${\rm SL}(2,\Bbb R)$}
  and {CFT}}},
        date={2008},
   booktitle={{Von {N}eumann algebras in {S}ibiu}},
      series={{Theta Ser. Adv. Math.}},
      volume={10},
   publisher={Theta, Bucharest},
       pages={33\ndash 91},
}

\bib{Lo1989}{article}{
      author={Longo, Roberto},
       title={{Index of subfactors and statistics of quantum fields. I}},
        date={1989},
     journal={Comm. Math. Phys.},
      volume={126},
       pages={217\ndash 247},
}

\bib{Lo1994}{article}{
      author={Longo, Roberto},
       title={{A duality for {H}opf algebras and for subfactors. {I}}},
        date={1994},
        ISSN={0010-3616},
     journal={Comm. Math. Phys.},
      volume={159},
       pages={133\ndash 150},
         url={http://projecteuclid.org/getRecord?id=euclid.cmp/1104254494},
}

\bib{Lon97}{article}{
      author={Longo, Roberto},
       title={An analogue of the {K}ac-{W}akimoto formula and black hole
  conditional entropy},
        date={1997},
        ISSN={0010-3616},
     journal={Comm. Math. Phys.},
      volume={186},
       pages={451\ndash 479},
         url={http://dx.doi.org/10.1007/s002200050116},
}

\bib{LoRe1995}{article}{
      author={Longo, Roberto},
      author={Rehren, Karl-Henning},
       title={{Nets of subfactors}},
        date={1995},
     journal={Rev. Math. Phys.},
      volume={7},
       pages={567\ndash 597},
}

\bib{LoRo1997}{article}{
      author={Longo, R.},
      author={Roberts, J.~E.},
       title={{A theory of dimension}},
        date={1997},
        ISSN={0920-3036},
     journal={K-Theory},
      volume={11},
       pages={103\ndash 159},
         url={http://dx.doi.org/10.1023/A:1007714415067},
}

\bib{Mi1984}{incollection}{
      author={Milnor, J.},
       title={Remarks on infinite-dimensional {L}ie groups},
        date={1984},
   booktitle={Relativity, groups and topology, {II} ({L}es {H}ouches, 1983)},
   publisher={North-Holland, Amsterdam},
       pages={1007\ndash 1057},
         url={https://mathscinet.ams.org/mathscinet-getitem?mr=830252},
}

\bib{MoTaWe2018}{article}{
      author={Morinelli, Vincenzo},
      author={Tanimoto, Yoh},
      author={Weiner, Mih\'{a}ly},
       title={Conformal covariance and the split property},
        date={2018},
        ISSN={0010-3616},
     journal={Comm. Math. Phys.},
      volume={357},
       pages={379\ndash 406},
         url={https://doi.org/10.1007/s00220-017-2961-3},
}

\bib{Mue10tour}{article}{
      author={M{\"u}ger, Michael},
       title={Tensor categories: a selective guided tour},
        date={2010},
        ISSN={0041-6932},
     journal={Rev. Un. Mat. Argentina},
      volume={51},
      number={1},
       pages={95\ndash 163},
}

\bib{NiStZs2003}{article}{
      author={Niculescu, Constantin~P.},
      author={Str{\"o}h, Anton},
      author={Zsid{\'o}, L{\'a}szl{\'o}},
       title={Noncommutative extensions of classical and multiple recurrence
  theorems},
        date={2003},
        ISSN={0379-4024},
     journal={J. Operator Theory},
      volume={50},
       pages={3\ndash 52},
}

\bib{OhPe1993}{book}{
      author={Ohya, Masanori},
      author={Petz, D\'enes},
       title={Quantum entropy and its use},
      series={Texts and Monographs in Physics},
   publisher={Springer-Verlag, Berlin},
        date={1993},
        ISBN={3-540-54881-5},
         url={https://doi.org/10.1007/978-3-642-57997-4},
}

\bib{Pa2002}{book}{
      author={Paulsen, Vern},
       title={Completely bounded maps and operator algebras},
      series={Cambridge Studies in Advanced Mathematics},
   publisher={Cambridge University Press, Cambridge},
        date={2002},
      volume={78},
        ISBN={0-521-81669-6},
         url={https://mathscinet.ams.org/mathscinet-getitem?mr=1976867},
}

\bib{Pe1984}{article}{
      author={Petz, D\'{e}nes},
       title={A dual in von {N}eumann algebras with weights},
        date={1984},
        ISSN={0033-5606},
     journal={Quart. J. Math. Oxford Ser. (2)},
      volume={35},
       pages={475\ndash 483},
         url={https://doi.org/10.1093/qmath/35.4.475},
}

\bib{Pet88}{article}{
      author={Petz, D{\'e}nes},
       title={Sufficiency of channels over von~{N}eumann algebras},
        date={1988},
     journal={Q. J. Math.},
      volume={39},
       pages={97\ndash 108},
}

\bib{Phe01}{book}{
      author={Phelps, Robert~R.},
       title={Lectures on {C}hoquet's theorem},
     edition={Second},
      series={Lecture Notes in Mathematics},
   publisher={Springer-Verlag, Berlin},
        date={2001},
      volume={1757},
        ISBN={3-540-41834-2},
         url={https://mathscinet.ams.org/mathscinet-getitem?mr=1835574},
}

\bib{PaRu22}{article}{
      author={Parzygnat, Arthur~J.},
      author={Russo, Benjamin~P.},
       title={A non-commutative {B}ayes' theorem},
        date={2022},
        ISSN={0024-3795},
     journal={Linear Algebra Appl.},
      volume={644},
       pages={28\ndash 94},
         url={https://doi.org/10.1016/j.laa.2022.02.030},
}

\bib{Re1994}{article}{
      author={Rehren, Karl-Henning},
       title={{A new view of the {V}irasoro algebra}},
        date={1994},
        ISSN={0377-9017},
     journal={Lett. Math. Phys.},
      volume={30},
       pages={125\ndash 130},
         url={http://dx.doi.org/10.1007/BF00939700},
}

\bib{St1997}{incollection}{
      author={St{\o}rmer, E.},
       title={Conditional expectations and projection maps of von {N}eumann
  algebras},
        date={1997},
   booktitle={Operator algebras and applications ({S}amos, 1996)},
      series={NATO Adv. Sci. Inst. Ser. C Math. Phys. Sci.},
      volume={495},
   publisher={Kluwer Acad. Publ., Dordrecht},
       pages={449\ndash 461},
         url={https://mathscinet.ams.org/mathscinet-getitem?mr=1462691},
}

\bib{SuWi2003}{article}{
      author={Sunder, V.~S.},
      author={Wildberger, N.~J.},
       title={Actions of finite hypergroups},
        date={2003},
        ISSN={0925-9899},
     journal={J. Algebraic Combin.},
      volume={18},
       pages={135\ndash 151},
         url={http://dx.doi.org/10.1023/A:1025107014451},
}

\bib{Ta1}{book}{
      author={Takesaki, M.},
       title={{Theory of operator algebras. {I}}},
      series={{Encyclopaedia of Mathematical Sciences}},
   publisher={Springer-Verlag},
     address={Berlin},
        date={2002},
      volume={124},
        ISBN={3-540-42248-X},
        note={Reprint of the first (1979) edition, Operator Algebras and
  Non-commutative Geometry, 5},
}

\bib{Ta1972}{article}{
      author={Takesaki, Masamichi},
       title={Conditional expectations in von {N}eumann algebras},
        date={1972},
     journal={J. Functional Analysis},
      volume={9},
       pages={306\ndash 321},
}

\bib{Vr1979}{article}{
      author={Vrem, Richard~C.},
       title={Harmonic analysis on compact hypergroups},
        date={1979},
        ISSN={0030-8730},
     journal={Pacific J. Math.},
      volume={85},
       pages={239\ndash 251},
         url={http://projecteuclid.org/euclid.pjm/1102784093},
}

\bib{WeiPhD}{thesis}{
      author={Weiner, Mih{\'a}ly},
       title={{Conformal covariance and related properties of chiral QFT}},
        type={Ph.D. Thesis},
        date={2005},
        note={\href{https://arxiv.org/abs/math/0703336}{Available at arXiv:0703336}}
}

\bib{WildeBook}{book}{
      author={Wilde, Mark~M.},
       title={Quantum information theory},
     edition={Second},
   publisher={Cambridge University Press, Cambridge},
        date={2017},
        ISBN={978-1-107-17616-4},
         url={https://doi.org/10.1017/9781316809976},
}

\bib{Xu2000orbi}{article}{
      author={Xu, Feng},
       title={{Algebraic orbifold conformal field theories}},
        date={2000},
     journal={Proc. Nat. Acad. Sci. U.S.A.},
      volume={97},
       pages={14069},
}

\bib{Xu2001siena}{incollection}{
      author={Xu, Feng},
       title={Algebraic orbifold conformal field theories},
        date={2001},
   booktitle={Mathematical physics in mathematics and physics ({S}iena, 2000)},
      series={Fields Inst. Commun.},
      volume={30},
   publisher={Amer. Math. Soc., Providence, RI},
       pages={429\ndash 448},
}

\bib{Xu2005}{article}{
      author={Xu, Feng},
       title={{Strong additivity and conformal nets}},
        date={2005},
        ISSN={0030-8730},
     journal={Pacific J. Math.},
      volume={221},
       pages={167\ndash 199},
         url={http://dx.doi.org/10.2140/pjm.2005.221.167},
}

\bib{To1959}{article}{
      author={Tomiyama, Jun},
       title={On the projection of norm one in {$W\sp*$}-algebras. {III}},
        date={1959},
        ISSN={0040-8735},
     journal={T\^ohoku Math. J.},
      volume={11},
       pages={125\ndash 129},
}

\end{biblist}
\end{bibdiv}

\bigskip

\address

\end{document}